\numberwithin{equation}{section}
\theoremstyle{plain}
\newtheorem{thm}{Theorem}[section]
\newtheorem{cor}[thm]{Corollary}
\newtheorem{lem}[thm]{Lemma}
\newtheorem{prop}[thm]{Proposition}
\newtheorem{defn}[thm]{Definition}
\newtheorem{exm}[thm]{Example}
\theoremstyle{remark}
\newtheorem{rem}[thm]{Remark}
\renewcommand{\mod}{\operatorname{mod}\nolimits}
\newcommand{\add}{\operatorname{add}\nolimits}
\newcommand{\Hom}{\operatorname{Hom}\nolimits}
\newcommand{\End}{\operatorname{End}\nolimits}
\newcommand{\Ker}{\operatorname{Ker}\nolimits}
\newcommand{\op}{\operatorname{op}\nolimits}
\newcommand{\del}{\delta}
\newcommand{\Cone}{\operatorname{Cone}\nolimits}
\newcommand{\CoCone}{\operatorname{CoCone}\nolimits}
\newcommand{\I}{\mathcal I}
\newcommand{\B}{\mathcal B}
\newcommand{\U}{\mathcal U}
\newcommand{\V}{\mathcal V}
\newcommand{\h}{\mathcal H}
\newcommand{\T}{\mathcal T}
\newcommand{\D}{\mathcal D}
\newcommand{\J}{\mathcal J}
\newcommand{\K}{\mathcal K}
\newcommand{\C}{\mathcal C}
\newcommand{\EE}{\mathbb E}
\newcommand{\svecv}[2]{\left(\begin{smallmatrix}
      #1 \\
      #2
    \end{smallmatrix}\right)}
\newcommand{\svech}[2]{\left(\begin{smallmatrix}
      #1 & #2
\end{smallmatrix}\right)}
\def\Ab{\mathsf{Ab}}
\renewcommand{\emph}{\textit}
\renewcommand{\phi}{\varphi}
\begin{document}

\title{Abelian categories arising from cluster-tilting subcategories II: quotient functors}\footnote{The first author is supported by the Fundamental Research Funds for the Central Universities (Grants No.2682018ZT25). The second author is supported by the Hunan Provincial Natural Science Foundation of China (Grants No.2018JJ3205) and the NSF of China (Grants No.11671221)}
\author{Yu Liu and Panyue Zhou}
\address{Department of Mathematics, Southwest Jiaotong University, 610031, Chengdu, Sichuan, People's Republic of China}
\email{liuyu86@swjtu.edu.cn}
\address{College of Mathematics, Hunan Institute of Science and Technology, 414006, Yueyang, Hunan, People's Republic of China}
\email{panyuezhou@163.com}
\thanks{The authors wish to thank Professor Bin Zhu for their helpful advices.}
\begin{abstract}
In this paper, we consider a kind of ideal quotient of an extriangulated category such that the ideal is the kernel of a functor from this extriangulated category to an abelian category. We study a condition when the functor is dense and full, in another word, the ideal quotient becomes abelian. Moreover, a new equivalent characterization of cluster-tilting subcategories is given by applying homological methods according to this functor. As an application, we show that in a connected $2$-Calabi-Yau triangulated category $\B$, a functorially finite, extension closed subcategory $\T$ of $\B$ is cluster tilting if and only if $\B/\T$ is an abelian category.
\end{abstract}

\maketitle

\section{Introduction}
Cluster tilting theory sprouted from the categorification of Fomin-Zelevinsky's cluster algebras.
It is used to construct abelian categories from some triangulated categories and exact categories. Koenig and Zhu \cite[Theorem 3.3]{KZ} showed that the quotient of any triangulated category modulo a
cluster-tilting subcategory is abelian. A similar result on exact category was shown by Demonet and Liu in \cite{DL}.

However not all triangulated categories or exact categories contain a cluster-tilting subcategory, but they may still admit abelian quotients (see Section 5 in \cite{KZ}, Example 18 in \cite{GJ} and Example \ref{ex2}). Moreover, not all abelian quotients are arising by moduloing subcategories (see Exmaple 19 in \cite{GJ} and Example \ref{ex1}). In fact, for example (see \cite{GJ}), in a Hom-finite, Krull-Schmidt triangulated $k$-category $\T$, where $k$ is a field, let $\mathcal J$ be an ideal such that $\mathcal J (A,B)=\{f\in \Hom_{\T}(A,B) \text{ }|\text{ } \Hom_{\T}(T,f) =0\}$. Then if $\Hom_{\T}(T,-)$ is a quotient functor (i.e. dense and full), we have the following commutative diagram
$$\xymatrix@C=0.5cm@R0.5cm{
\T \ar[rr]^-{\Hom_{\T}(T,-)} \ar[dr]_{\pi} && \mod \End_{\T}(T)^{\op}\\
&\T/\mathcal J \ar[ur]_-{\simeq}
}
$$
where $\pi:\T\to \T/\mathcal J$ is the natural quotient functor. Grimeland and Jacobsen showed the following result, which contains all the cases above on triangulated categories.

\begin{thm}\cite[Theorem 17]{GJ} and \cite[Theorem 25]{GJ}
Let $\T$ be a Hom-finite Krull-Schmidt triangulated $k$-category, where $k$ is a field. Then
$\Hom_{\T}(T,-)$ is a quotient functor, i.e. full and dense, if and only if the following two conditions are satisfied
\begin{itemize}
\item[(a)] For all right minimal morphisms $T_1\to T_0$, where $T_0,T_1\in\add T$, all triangles
$$T_1\rightarrow T_0 \rightarrow X\xrightarrow{~h~}\Sigma T_1$$
satisfy $\Hom_{\T}(T,h)=0$.

\item[(b)] For all indecomposable $T$-support objects $X$, i.e. $\Hom_{\T}(T,X)\neq 0$, there exist triangles
$$T_1\rightarrow T_0 \rightarrow X\xrightarrow{~h~}\Sigma T_1$$
with $T_1,T_0\in\add T$ and $\Hom_{\T}(T,h)= 0$.
\end{itemize}
\medskip
An object $T\in\T$ is a cluster-tilting object if and only if $(a)$ is satisfied
and furthermore:
\begin{itemize}
\item[(c)] For all indecomposable  objects $X$, there exists a triangle
$$T_1\rightarrow T_0 \rightarrow X\xrightarrow{~h~}\Sigma T_1$$
with $T_1,T_0\in\add T$ and $\Hom_{\T}(T,h)\neq 0$.

\item[(d)] if $T'$ is an indecomposable summand of $T$, then $\Sigma T'\notin\add T$.
\end{itemize}
\end{thm}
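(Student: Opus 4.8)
The plan is to study the cohomological functor $F:=\Hom_\T(T,-)\colon\T\to\mod\End_\T(T)^{\op}$ through the Yoneda correspondence, which restricts to an equivalence $\add T\xrightarrow{\ \sim\ }\proj\End_\T(T)^{\op}$ and is in particular faithful on every morphism whose source lies in $\add T$; hence $\mathcal J(T_0,Y)=0$ whenever $T_0\in\add T$. Each biconditional I would split into a constructive ``if'' direction and a ``reflection'' ``only if'' direction, using throughout that $F$ is cohomological: a triangle $T_1\xrightarrow{f}T_0\xrightarrow{g}X\xrightarrow{h}\Sigma T_1$ produces an exact sequence ending in $\Hom_\T(T,T_0)\xrightarrow{\Hom_\T(T,g)}\Hom_\T(T,X)\xrightarrow{\Hom_\T(T,h)}\Hom_\T(T,\Sigma T_1)$, so that $\Hom_\T(T,h)=0$ is precisely what forces $F(X)=\Coker\Hom_\T(T,f)$.

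For density from (a): given $M\in\mod\End_\T(T)^{\op}$, choose a minimal projective presentation $P_1\xrightarrow{\alpha}P_0\to M\to0$ and lift $\alpha$ through Yoneda to a right minimal $f\colon T_1\to T_0$ (minimality of the presentation makes $\alpha$, hence $f$, right minimal). Completing $f$ to a triangle and invoking (a) gives $\Hom_\T(T,h)=0$, whence $F(\Cone f)\cong\Coker\alpha=M$. For fullness from (b): a module map $\phi\colon\Hom_\T(T,X)\to\Hom_\T(T,Y)$ is automatically zero unless $X$ is $T$-support, so assume $X$ is support and take the triangle supplied by (b). Lifting $\phi\circ\Hom_\T(T,g)$ along Yoneda yields $b\colon T_0\to Y$; since $\Hom_\T(T,bf)=\phi\,\Hom_\T(T,gf)=0$, we have $bf\in\mathcal J(T_1,Y)$ with $T_1\in\add T$, so the faithfulness above forces $bf=0$, $b$ factors as $b=ag$, and surjectivity of $\Hom_\T(T,g)$ gives $\Hom_\T(T,a)=\phi$. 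Thus (a) and (b) together yield ``full and dense.''

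For the converse I would recover (b) from density and (a) from fullness. For (b) one resolves a support object $X$ by a minimal right $\add T$-approximation $T_0\to X$ (using functorial finiteness of $\add T$ in the Hom-finite Krull--Schmidt setting), takes its cocone, approximates again, and assembles $T_1\to T_0\to X'\to\Sigma T_1$ via the octahedral axiom with $F(X')\cong F(X)$; fullness is then used to upgrade this to an isomorphism modulo $\mathcal J$ and to conclude $\Hom_\T(T,h)=0$. For (a), given right minimal $f$ one must show that the possible extra quotient $\Im\Hom_\T(T,h)=\Ker\Hom_\T(T,\Sigma f)$ of $\Hom_\T(T,X)$ beyond $\Coker\Hom_\T(T,f)$ vanishes, reflecting this extra piece back through fullness so as to contradict the minimality of $f$ (equivalently, of the presentation $\Hom_\T(T,f)$). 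I expect this reflection step -- converting module-level data back into morphisms of $\T$ pinned down by minimality and functorial finiteness -- to be the \emph{main obstacle} of the first biconditional.

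For the cluster-tilting statement, if $T$ is cluster tilting it is rigid, $\Hom_\T(T,\Sigma T)=0$, and every object $X$ has an $\add T$-resolution $T_1\to T_0\to X\xrightarrow{h}\Sigma T_1$; because $h$ lands in $\Hom_\T(T,\Sigma T_1)=0$ its connecting map automatically vanishes, which yields (a) and the resolution content of (c), while rigidity yields (d). The substance is the converse: (c) already supplies $\add T$-resolutions of every indecomposable, hence of every object, so the entire content reduces to deriving rigidity $\Hom_\T(T,\Sigma T)=0$ from (a), (c), (d). I would argue summand by summand: apply (c) to $X=\Sigma T'$ for an indecomposable summand $T'$, rotate the triangle to $T'\xrightarrow{v}T_1\xrightarrow{f}T_0\to\Sigma T'$, and split $f$ into its right minimal part and a part killed on a summand $K\subseteq T_1$; indecomposability of $\Sigma T'$ forces either $\Sigma T'\cong\Cone(\text{right minimal part})$ or $\Sigma T'\cong\Sigma K$, and in the favourable case $\Sigma v$ is a split monomorphism with $\Hom_\T(T,\Sigma v)=0$, which forces $\Hom_\T(T,\Sigma T')=0$. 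The crux, which I expect to be the \emph{main obstacle} of the second biconditional, is to use condition (d) -- the prohibition $\Sigma T'\notin\add T$ -- together with (a) to always drive the analysis into this favourable case, so that genuine rigidity follows rather than merely the weaker statement (d).
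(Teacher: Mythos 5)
First, a point of comparison: the paper never proves this theorem --- it is quoted from \cite{GJ} as motivation --- and the only proof the paper contains is of the extriangulated analogue, namely Theorem \ref{main1} (parts (I)--(IV) of its proof in Section 3) together with Proposition \ref{CT}; your proposal has to be measured against that. Your first implication, (a)$+$(b) $\Rightarrow$ full and dense, is correct and is essentially the paper's parts (I) and (II) transported to the triangulated setting: a minimal projective presentation is lifted through the equivalence $\add T\simeq\proj\End_{\T}(T)^{\op}$ to a right minimal $f$ (the paper's Lemma \ref{min} is exactly this transfer of minimality), condition (a) kills the connecting map so that the cone realizes the given module; for fullness you lift $\phi\circ\Hom_{\T}(T,g)$ to $b\colon T_0\to Y$, use faithfulness on $\add T$-sources to get $bf=0$, factor $b=ag$ and cancel the epimorphism $\Hom_{\T}(T,g)$. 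That half is sound.

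Both converse directions, however, contain genuine gaps, and they sit exactly at the steps you yourself label ``main obstacles''; these are not technical residue but the core of the theorem. (i) Your pairing ``(b) from density, (a) from fullness'' cannot work, because (b) is not reachable independently of (a). In your approximation--octahedron construction, write $g_0\colon T_0\to X$ for the minimal right approximation, $w\colon Y\to T_0$ for its cocone, $v\colon T_1\to Y$ for the second approximation, and let $T_1\xrightarrow{wv}T_0\xrightarrow{g'}X'\xrightarrow{d'}\Sigma T_1$ and $t\colon X'\to X$ be produced by the octahedron. Since $F(g_0)$ and $F(v)$ are epimorphisms (where $F=\Hom_{\T}(T,-)$), the two long exact sequences give $F(X')\cong F(X)\oplus\Im F(d')$, \emph{not} $F(X')\cong F(X)$: the vanishing of the extra summand $\Im F(d')$ is precisely statement (a) applied to the right minimal part of $wv$, i.e.\ the thing still unproved. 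This is why the paper establishes (a) first (part (III)) and only then deduces (b) (part (IV)), using (a), fullness, density and Krull--Schmidt simultaneously. (ii) For (a) itself you offer no mechanism beyond ``reflect the extra piece back through fullness.'' The paper's part (III) shows what an actual proof requires: factor $F(g)$ as an epimorphism followed by a monomorphism in the module category, use density to realize the image as $F(C)$ and fullness to lift both factors, correct the lifts so that the epimorphic part is left minimal, deduce via Krull--Schmidt that $C$ is a direct summand of $X$, write $X=C\oplus D$, show $D$ is a direct summand of $\Sigma T_1$, and finally use right minimality of $f$ to force $F(D)=0$; only then is $F(g)$ epic and $F(h)=0$. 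The rigidity step of your cluster-tilting half has the same status: the case analysis is left open exactly where (d) must intervene (compare Proposition \ref{CT}, where rigidity is deduced from (a), (b*) and $\T\cap\Omega\T=\mathcal P$ by an explicit diagram argument, not asserted). Finally, a point you pass over silently: condition (c) as quoted, with $\Hom_{\T}(T,h)\neq 0$, is a typo for $\Hom_{\T}(T,h)=0$; your own forward argument derives $\Hom_{\T}(T,h)=0$ from rigidity, so the statement with $\neq 0$ is false whenever $T\neq 0$, and the paper's analogue, condition (b*) of Proposition \ref{CT}, indeed demands $H(\underline h)=0$. You should say explicitly that you are proving the corrected statement.
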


It is very natural to consider if the similar result holds on exact categories. Also we are considering if we can drop some assumptions for the triangulated case \cite{GJ}. Hence in this article we will still work on an extriangulated category as in \cite{LZ} and consider the similar question on it. The notion of an extriangulated category was introduced in \cite{NP} (please see
section 2 for detailed definition of extriangulated category), which is a simultaneous generalization of
exact category and triangulated category.

In this article, let $(\B,\EE,\mathfrak{s})$ be a Hom-finite Krull-Schmidt extriangulated category over a field $k$. Any subcategory we discuss in this article will be full and closed under isomorphisms. We also assume that $\B$ has enough projectives $\mathcal P$ and enough injectives $\mathcal I$.

We first introduce some notions.

We denote by $\B/\D$ the category whose objects are objects of $\B$ and whose morphisms are elements of
$\Hom_{\B}(A,B)/\D(A,B)$ for $A,B\in\B$, where $\D(A,B)$ the subgroup of $\Hom_{\B}(A,B)$ consisting of morphisms
which factor through an object in $\D$. Such category is called the quotient category
of $\B$ by $\D$.

When $\D=\mathcal P$, for any morphism $f\colon A\to B$ in $\B$, we denote by $\underline{f}$ the image of $f$ under
the natural quotient functor $\B\to \B/\mathcal P$. We also denote $\B'/\mathcal P$ by $\underline \B'$ if $\mathcal P\subseteq \B'\subseteq \B$.

When $\D=\mathcal I$, for any morphism $f\colon A\to B$ in $\B$, we denote by $\overline{f}$ the image of $f$ under
the natural quotient functor $\B\to \B/\mathcal I$. We also denote $\B'/\mathcal I$ by $\overline {\B'}$ if $\mathcal I\subseteq \B'\subseteq \B$.

Let $\B',\B''$ be two subcategories of $\B$, denote by $\CoCone(\B',\B'')$ the subcategory of objects $X$ admitting an $\EE$-triangle $\xymatrix{ X\ar[r] &B'\ar[r]  &B'' \ar@{-->}[r] &}$ where $B'\in \B'$ and $B''\in \B''$. We denote by $\Cone(\B',\B'')$ the subcategory of objects $Y$ admitting an $\EE$-triangle $\xymatrix{ B'\ar[r]  &B'' \ar[r] &Y \ar@{-->}[r] &}$ where $B'\in \B'$ and $B''\in \B''$.

Let $\Omega \B'=\CoCone(\mathcal P,\B')$ and $\Sigma \B'=\Cone(\B',\mathcal I)$. If there is no confusion, we can write an object $D$ in the form $\Omega B$ if it admits an $\EE$-triangle $\xymatrix{D \ar[r] &P \ar[r] &B \ar@{-->}[r] &}$ where $P\in \mathcal P$, and write an object $D'$ in the form $\Sigma B'$ if it admits an $\EE$-triangle $\xymatrix{B' \ar[r] &I \ar[r] &D' \ar@{-->}[r] &}$ where $I\in \mathcal I$.

From now on, let $\T$ be a subcategory of $\B$ which is closed under direct sums and summands. We give the first main result of this article, which is an extriangulated category version of \cite[Theorem 17]{GJ}.

\begin{thm}\label{main1}
Let $\T$ be a contravariantly finite subcategory of $\B$ such that $\mathcal P\subset \T$ and $\EE(\T,\mathcal P)=0$. Then the functor $\Hom_{\underline \B}(\Omega \T,-)$ is a quotient functor, \emph{i.e.} full and dense, if and only if the following conditions are satisfied:
\begin{itemize}
\item[(a)] In the following commutative diagram
$$\xymatrix{
R_1 \ar@{=}[r] \ar[d]_f &R_1 \ar[d]\\
R_2 \ar[d]_g \ar[r] &P \ar[r] \ar[d] &T_2 \ar@{=}[d] \ar@{-->}[r]&\\
A \ar[r]_h \ar@{-->}[d] &T_1 \ar[r] \ar@{-->}[d] &T_2 \ar@{-->}[r] &\\
&& &&
}
$$
where $T_1,T_2\in \T$ and $P\in \mathcal P$, if $0\neq \underline f$ is right minimal, then $\Hom_{\underline \B}(\Omega \T,\underline h)=0$.
\item[(b)] For any indecomposable object $A$ such that $\Hom_{\underline \B}(\Omega \T,A)\neq 0$, $A$ admits a commutative diagram
$$\xymatrix{
R_1 \ar@{=}[r] \ar[d]_f &R_1 \ar[d]\\
R_2 \ar[d]_g \ar[r] &P \ar[r] \ar[d] &T_2 \ar@{=}[d] \ar@{-->}[r]&\\
A \ar[r]_h \ar@{-->}[d] &T_1 \ar[r] \ar@{-->}[d] &T_2 \ar@{-->}[r] &\\
&& &&
}
$$
where $T_1,T_2\in \T$, $P\in \mathcal P$ and $\Hom_{\underline \B}(\Omega \T,\underline h)=0$.
\end{itemize}

\end{thm}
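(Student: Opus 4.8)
The plan is to view $F:=\Hom_{\underline{\B}}(\Omega\T,-)$ as a functor into the category $\mod(\underline{\Omega\T})$ of finitely presented functors on $\Omega\T$, sending each $\Omega T$ to its (projective) representable functor, and then to translate the displayed $3\times3$ diagram into a projective presentation of $F(A)$. The first step, which I expect to do the real work, is to establish the natural isomorphism $\Hom_{\underline{\B}}(\Omega T,X)\cong\EE(T,X)$ for all $T\in\T$ and $X\in\B$. This rests on the two hypotheses: since $\EE(\T,\mathcal P)=0$, applying $\Hom_{\B}(-,P')$ to the defining $\EE$-triangle $\Omega T\to P\to T\dashrightarrow$ shows that $\Omega T\to P$ is a left $\mathcal P$-approximation, so the maps $\Omega T\to X$ factoring through $\mathcal P$ are exactly those factoring through $P$; applying $\Hom_{\B}(-,X)$ to the same triangle and using $\EE(P,X)=0$ then identifies $\Hom_{\underline{\B}}(\Omega T,X)$ with $\EE(T,X)$. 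Under this identification $F\cong\EE(\T,-)$.

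With this in hand, the left column $\Omega T_1\xrightarrow{f}\Omega T_2\xrightarrow{g}A\dashrightarrow$ yields, after applying $F$, a complex $F(\Omega T_1)\xrightarrow{F(f)}F(\Omega T_2)\xrightarrow{F(g)}F(A)$ in $\mod(\underline{\Omega\T})$; exactness on the left is automatic, and I would show that $F(g)$ is \emph{surjective} precisely when $\Hom_{\underline{\B}}(\Omega\T,\underline h)=0$, by comparing the cokernel of $F(g)$ (the image of the connecting morphism of the left column) with the map induced by the bottom row $A\xrightarrow{h}T_1\to T_2\dashrightarrow$ through the isomorphism of the previous step. Thus the condition on $\underline h$ says exactly that $F(\Omega T_1)\to F(\Omega T_2)\to F(A)\to0$ is a projective presentation, and I would further match right minimality of $\underline f$ with minimality of this presentation, i.e.\ with the presenting map being a radical morphism.

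Granting this dictionary, necessity is quick. If $F$ is dense, an indecomposable support object $A$ has $F(A)\neq0$; taking a minimal projective presentation and reading its presenting map via Yoneda as some $\underline f\colon\Omega T_1\to\Omega T_2$, I complete it to the displayed diagram by the $3\times3$ construction available in $(\B,\EE,\mathfrak s)$, and exactness of the presentation gives (b). Starting instead from the diagram of (a) with $0\neq\underline f$ right minimal, minimality of the induced presentation forces $F(g)$ onto, that is $\Hom_{\underline{\B}}(\Omega\T,\underline h)=0$. For sufficiency I treat the two defining properties separately: for density, an indecomposable non-projective $M\in\mod(\underline{\Omega\T})$ has a minimal presentation whose presenting map is a right minimal $\underline f$; completing to the diagram and invoking (a) gives $\Hom_{\underline{\B}}(\Omega\T,\underline h)=0$, hence $F(A)\cong M$, while (b) supplies the objects realizing the support. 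For fullness, a morphism $\psi\colon F(A)\to F(B)$ is lifted along presentations coming from (a)--(b): by Yoneda it becomes a morphism of the presenting maps, which the left columns turn into a morphism $\underline\phi\colon A\to B$ with $F(\underline\phi)=\psi$.

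The main obstacle is the second step --- proving that vanishing of $\Hom_{\underline{\B}}(\Omega\T,\underline h)$ is equivalent to surjectivity of $F(g)$. In the triangulated case one simply rotates triangles and applies the octahedral axiom; here one must instead build and chase the $3\times3$ diagram using only the extriangulated axioms, carefully separating computations in $\B$ from those in $\underline{\B}=\B/\mathcal P$ and repeatedly invoking $\EE(\T,\mathcal P)=0$, so that the connecting morphism of the left column is correctly identified with the map induced by $\underline h$. A secondary difficulty, in the fullness step, is checking that the lifted morphism $\underline\phi$ is well defined independently of the chosen presentations, that is, modulo morphisms factoring through $\mathcal P$.
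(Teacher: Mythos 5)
Your preparatory steps are sound and line up with the paper's own toolkit: the isomorphism $\Hom_{\underline{\B}}(\Omega T,X)\cong\EE(T,X)$ is the content behind Remark \ref{useful}(d), and your key dictionary --- that applying $H=\Hom_{\underline{\B}}(\Omega\T,-)$ (your $F$) to the displayed diagram yields an exact sequence $H(R_1)\to H(R_2)\to H(A)\to H(T_1)$ in $\mod\underline{\Omega\T}$, so that $\Hom_{\underline{\B}}(\Omega\T,\underline{h})=0$ is equivalent to surjectivity of $H(\underline{g})$ --- is exactly Lemma \ref{exact}. Your sufficiency arguments also match the paper: (a) gives density by comparing cokernels of a right minimal presentation (part (I), via Lemmas \ref{min} and \ref{tec}), and (b) gives fullness by lifting a morphism along the two presentations (part (II)). One remark there: your ``secondary difficulty'' is real but is an existence problem rather than a well-definedness problem --- Yoneda only gives squares commuting in $\underline{\B}$, and these must be corrected to squares commuting in $\B$ before the realization axiom can produce the morphism on cones; the paper does this correction using Remark \ref{remark}.

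The genuine gap is in the necessity direction. Your sentence ``minimality of the induced presentation forces $F(g)$ onto'' is not a valid step: the dictionary only gives the exact sequence above, so $\Coker F(f)\cong\Im F(g)$ is a subobject of $F(A)$, and right minimality of $F(f)$ says nothing about this inclusion being an equality. Observe that your inference, as stated, uses neither fullness nor density; if it were correct it would establish condition (a) unconditionally, which cannot be, since (a) by itself already implies density and $H$ need not be dense. Deducing (a) from fullness and density is in fact the longest part of the paper's proof (part (III)): one uses density to realize $\Im H(\underline{g})$ as $H(C)$ for an object $C$, fullness to realize the epi-mono factorization by actual morphisms $u,v$ of $\B$, corrects these modulo maps factoring through $\mathcal P$, then uses the Krull-Schmidt property to split $A=C\oplus D$, shows the complement $D$ is a direct summand of $T_1$, and finally kills $H(D)$ using right minimality of the induced map $T_1\to T_2$ (via Lemma \ref{min2}). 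None of this appears in your proposal. The same machinery is also missing from your necessity proof of (b): your minimal presentation of $F(A)$ produces the displayed diagram for the cone $B$ of $\underline{f}$, not for the given indecomposable $A$; passing from $B$ to $A$ requires fullness to realize the isomorphism $H(A)\cong H(B)$ by morphisms $A\to B\to A$, Krull-Schmidt to conclude that $A$ is a direct summand of $B$, and then essentially a rerun of the part-(III) argument --- which is precisely how the paper's part (IV) proceeds.
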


By \cite[Proposition 3.30]{NP}, $\B/(\mathcal P\cap \mathcal I)$  is still an extriangulated category. We recall that an extriangulated category is called connected if it can not be written as a direct sum of two extriangulated subcategories (for details, please see \cite[Definition 1.9]{LZ}). We apply Theorem \ref{main1} and prove the second main result of this article.

\begin{thm}\label{main}
Let $\T$ be a non-zero functorially finite subcategory of $\B$ such that
\begin{itemize}
\item[(1)] $\mathcal P\subset \T$ and $\mathcal I\subset \T$;
\item[(2)] $\EE(\T,\mathcal P)=0=\EE(\mathcal I,\T)$;
\item[(3)] For object $B$, $\Hom_{\underline \B}(\Omega \T,B)=0$ if and only if $\Hom_{\overline \B}(B,\Sigma \T)=0$.
\end{itemize}
If $\B/(\mathcal P\cap \mathcal I)$ is connected, $\Hom_{\underline{\B}}(\Omega \T,-)$ and $\Hom_{\overline{\B}}(-,\Sigma \T)$ are quotient functors, then $\T$ is a cluster-tilting subcategory if and only if one of the following conditions is satisfied
\begin{itemize}
\item[(a)] $\T\cap \Omega \T=\mathcal P$.
\item[(b)] $\T$ is extension closed.
\end{itemize}
\end{thm}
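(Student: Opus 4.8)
The plan is to recast both sides of the equivalence in terms of the vanishing subcategory
$$\Z:=\{B\in\B\mid\EE(\T,B)=0\}=\{B\in\B\mid\EE(B,\T)=0\},$$
the two descriptions agreeing by hypothesis (3). The bridge to the functorial assumptions is the pair of natural isomorphisms $\Hom_{\underline{\B}}(\Omega T,B)\cong\EE(T,B)$ and $\Hom_{\overline{\B}}(B,\Sigma T)\cong\EE(B,T)$, obtained by applying $\Hom(-,B)$ and $\Hom(B,-)$ to the defining $\EE$-triangles $\Omega T\to P\to T\dashrightarrow$ and $T\to I\to\Sigma T\dashrightarrow$ and invoking the projectivity of $P$, the injectivity of $I$, and the vanishings $\EE(\T,\mathcal P)=0=\EE(\mathcal I,\T)$ from (2) (the latter being exactly what upgrades the surjection onto $\EE$ to an isomorphism). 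Consequently $\Z$ is precisely the common object-kernel of the two quotient functors, and, since $\T$ is cluster tilting exactly when $\T=\Z$, the theorem reduces to proving that each of (a) and (b) is equivalent to the identity $\T=\Z$. I would also fix at the outset the splitting criterion (an $\EE$-triangle is split iff its class vanishes) and the two long exact sequences attached to an $\EE$-triangle, which are the workhorses of the argument.

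For the forward implication, assume $\T=\Z$, so in particular $\EE(\T,\T)=0$. Then (b) is immediate: for an $\EE$-triangle $B'\to E\to B''\dashrightarrow$ with $B',B''\in\T=\Z$, the long exact sequences in $\EE(\T,-)$ and in $\EE(-,\T)$ squeeze $\EE(\T,E)$ and $\EE(E,\T)$ between zeros, whence $E\in\Z=\T$ and $\T$ is extension closed. Condition (a) follows from the splitting criterion: $\mathcal P\subseteq\T\cap\Omega\T$ always, and if $X\in\T\cap\Omega\T$ has defining $\EE$-triangle $X\to P\to T\dashrightarrow$ with class $\delta\in\EE(T,X)\subseteq\EE(\T,\T)=0$, the triangle splits, $X$ is a summand of $P\in\mathcal P$, and hence $X\in\mathcal P$.

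The substance lies in the converse, which I would route through rigidity $\EE(\T,\T)=0$, i.e. $\T\subseteq\Z$. Granting rigidity, the assumption that both $\Hom_{\underline{\B}}(\Omega\T,-)$ and $\Hom_{\overline{\B}}(-,\Sigma\T)$ are quotient functors feeds Theorem \ref{main1}(b) and its injective dual, yielding for every $B\notin\Z$ the approximation $\EE$-triangles $T_1\to T_0\to B\dashrightarrow$ and $B\to T^0\to T^1\dashrightarrow$ with terms in $\T$; these are exactly the resolutions demanded by cluster tilting away from the kernel. To extract rigidity from (a) I would argue contrapositively: a non-split class of $\EE(T,T')$ corresponds, under $\EE(T,T')\cong\Hom_{\underline{\B}}(\Omega T,T')$, to a nonzero map $\Omega T\to T'$ in $\underline{\B}$, and threading this map through the pullback diagram of Theorem \ref{main1}(a) should manufacture a non-projective object of $\T\cap\Omega\T$, contradicting (a); thus (a) forces $\T\subseteq\Z$.

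Two points remain and form the heart of the proof. First, I must show that the kernel is not too large, namely $\Z\subseteq\T$; the naive splitting argument is circular here, since Theorem \ref{main1} provides no approximation triangle for objects already in $\Z$. This is where I expect the connectedness of $\B/(\mathcal P\cap\mathcal I)$ to be decisive: a rigid $\T$ with $\Z\supsetneq\T$ should decompose $\B/(\mathcal P\cap\mathcal I)$ into the subcategory resolved by $\T$ and a nontrivial complement supported on $\Z\setminus\T$, contradicting connectedness. Second, I must derive rigidity from (b); since extension closedness can never imply rigidity on its own (an abelian category is extension closed inside itself), this step must genuinely use that \emph{both} functors are quotient functors together with connectedness, in the same spirit. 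Carefully setting up these two uses of connectedness, while keeping the projective ($\Omega\T$) and injective ($\Sigma\T$) pictures synchronized through condition (3), is the delicate bookkeeping I expect to dominate the proof.
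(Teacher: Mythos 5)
Your skeleton coincides with the paper's: identify the common kernel $\Z=\K$ of the two functors, note that (given functorial finiteness and the standing assumption ${^{\bot_1}}\T=\K$) cluster tilting means exactly $\T=\K$, and use connectedness of $\B/(\mathcal P\cap \mathcal I)$ to force the kernel inside $\T$. The forward direction you give (cluster tilting $\Rightarrow$ (a) via the splitting criterion, $\Rightarrow$ (b) via the six-term exact sequences) is correct. But the two steps you yourself flag as the heart are precisely where the proof lives, and the proposal contains no argument for either. For $\Z\subseteq\T$: to contradict connectedness you must exhibit an honest direct sum decomposition of $\B/(\mathcal P\cap \mathcal I)$ into two \emph{extriangulated} subcategories, and this requires vanishing of morphisms in both directions between the pieces, closure of each piece under extensions, and stability of the kernel piece under syzygies and cosyzygies. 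The paper does exactly this (Corollary \ref{maincor}) by splitting off $\widetilde{\K}$, the objects of $\K$ without summands in $\T$: Lemma \ref{zero} proves $\Hom_{\underline \B}(\h,\widetilde{\K})=0$ using the approximation diagrams of Theorem \ref{main1} together with hypothesis (3), and Lemma \ref{syzygy} proves that each indecomposable of $\widetilde{\K}$ has syzygy and cosyzygy again in $\widetilde{\K}$ with middle term in $\mathcal P\cap \mathcal I$ (this is where \emph{both} quotient-functor hypotheses and the WIC condition enter). None of this is supplied by ``a rigid $\T$ with $\Z\supsetneq\T$ should decompose $\B/(\mathcal P\cap\mathcal I)$''; note also that the paper establishes the decomposition before, and independently of, any rigidity of $\T$, whereas your plan makes rigidity a prerequisite.

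The second gap is case (b). You correctly observe that extension closedness alone cannot yield rigidity of $\T$, but you then have no mechanism to produce it, and the paper's resolution is structurally different: it never proves $\T$ rigid directly in this case. Once connectedness gives $\widetilde{\K}=0$, i.e. $\K\subseteq\T$, extension closedness plus (WIC) yields cotorsion pairs $(\T,\K)$ and $(\K,\T)$, so $\EE(\K,\K)\subseteq\EE(\K,\T)=0$: it is $\K$, not $\T$, that is rigid for free. Then $\B/\K$ is abelian because $H$ is a quotient functor (Proposition \ref{eq2}), the main theorem of \cite{LZ} upgrades the pre-cluster-tilting subcategory $\K$ to a cluster-tilting one, and finally $\EE(\T,\K)=0$ forces $\T\subseteq{^{\bot_1}}\K=\K$, whence $\T=\K$. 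This transfer of the problem from $\T$ to $\K$ is the missing idea; ``the same spirit of connectedness'' does not substitute for it. Likewise your case (a) sketch -- that a nonsplit class in $\EE(\T,\T)$ ``should manufacture'' a nonprojective object of $\T\cap\Omega\T$ -- is a hope, not an argument; the paper's Proposition \ref{CT} and Corollary \ref{rigid} derive rigidity from $\T\cap\Omega\T=\mathcal P$ by a nontrivial diagram chase using condition (a) of Theorem \ref{main1}, Lemma \ref{app} and Lemma \ref{summand}. In short: right architecture, correct easy direction, but both implications (a) $\Rightarrow$ cluster tilting and (b) $\Rightarrow$ cluster tilting remain unproven in the proposal.
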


By using this theorem, we get the following interesting corollaries, the second one generalizes \cite[Theorem 27]{GJ}.

\begin{cor}
Let $\B$ be a connected, Krull-Schmidt, $2$-Calabi-Yau triangulated category over a field $k$, $\T$ be a functorially finite, extension closed subcategory. Then $\B/\T$ is abelian if and only if $\T$ is cluster-tilting.
\end{cor}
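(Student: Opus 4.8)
The plan is to specialize Theorem~\ref{main} to the triangulated setting and then to feed the abelianness of $\B/\T$ into it through the criterion of Theorem~\ref{main1}. First I would translate the triangulated data into the extriangulated language of Theorem~\ref{main}. Viewing $\B$ as an extriangulated category whose $\EE$-triangles are the distinguished triangles, one has $\mathcal P=\mathcal I=0$, since $\EE(P,-)=\Hom_{\B}(P,\Sigma-)$ vanishes identically only for $P=0$, and dually for injectives. Hence $\underline\B=\overline\B=\B$; every object of $\Omega\T=\CoCone(\mathcal P,\T)$ is a desuspension $\Sigma^{-1}T$ with $T\in\T$ and every object of $\Sigma\T=\Cone(\T,\mathcal I)$ an ordinary suspension, so the two functors in Theorem~\ref{main} become $F=\Hom_{\B}(\Sigma^{-1}\T,-)=\EE(\T,-)$ and $G=\Hom_{\B}(-,\Sigma\T)$. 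Conditions (1) and (2) of Theorem~\ref{main} then hold trivially because $\mathcal P=\mathcal I=0\subset\T$, and $\B/(\mathcal P\cap\mathcal I)=\B$ is connected by hypothesis. The $2$-Calabi--Yau isomorphism $\Hom_{\B}(\Sigma^{-1}T,B)\cong D\Hom_{\B}(B,\Sigma T)$, natural in $B$ and $T$, simultaneously gives condition (3) and identifies $G$, up to the $k$-duality $D$, with $F$; since $D$ is an exact anti-equivalence of the relevant module categories, $F$ is full and dense precisely when $G$ is, so only $F$ needs to be examined.

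Next I would organize the two implications. As $\T$ is extension closed, condition (b) of Theorem~\ref{main} holds automatically, so the moment $F$ (equivalently $G$) is known to be a quotient functor, every hypothesis of Theorem~\ref{main} is in force and its equivalence forces $\T$ to be cluster tilting. The converse implication ``$\T$ cluster tilting $\Rightarrow$ $\B/\T$ abelian'' is the theorem of Koenig and Zhu \cite[Theorem 3.3]{KZ}. Thus, after disposing of the trivial case $\T=0$ (where $\B/\T=\B$ fails to be abelian for nontrivial $\B$), the whole corollary collapses to the single implication
$$
\B/\T \text{ abelian}\ \Longrightarrow\ F \text{ is a quotient functor.}
$$

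To prove this I would verify the criteria (a) and (b) of Theorem~\ref{main1} for $F$. Functorial finiteness of $\T$ supplies the $\T$-approximations out of which the $\CoCone$/$\Cone$ diagrams of Theorem~\ref{main1} are built, while the abelian structure of $\B/\T$ — the existence of kernels and cokernels, hence the good exactness properties of the canonical functor $\B\to\B/\T$ — is what I would use to guarantee that the cones of these approximations remain in $\T$ and that the required vanishing $\Hom_{\B}(\Sigma^{-1}\T,\underline h)=0$ together with the right-minimality in (a) hold. Theorem~\ref{main1} would then certify $F$ as a quotient functor, and Theorem~\ref{main} would conclude.

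The step I expect to be the main obstacle is exactly this passage from the abstract abelianness of $\B/\T$ to the concrete homological conditions (a) and (b). The subtlety is that $\B/\T$ is the quotient by the ideal $[\T]$ of morphisms factoring through $\T$, whereas the functor $F$ is governed by the ideal $\ker F=\{f:\EE(\T,f)=0\}$; these two coincide only once $\T$ is rigid, and indeed being cluster tilting forces $\EE(\T,\T)=0$. Thus a genuine part of the work is to show that the abelian structure of $\B/\T$ propagates to $F$ and ultimately yields rigidity. I would attack this by lifting kernels and cokernels computed in $\B/\T$ to $\EE$-triangles in $\B$, converting them through the $2$-Calabi--Yau duality into the vanishing and minimality demanded in (a) and (b), and using the Krull--Schmidt property to reduce to indecomposable objects; keeping track throughout of how the factorization ideal $[\T]$ and the homological ideal $\ker F$ interact is the delicate heart of the argument.
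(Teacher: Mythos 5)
Your reduction is set up correctly: in the triangulated case $\mathcal P=\mathcal I=0$, so conditions (1) and (2) of Theorem \ref{main} are vacuous, condition (3) is the $2$-Calabi--Yau duality, extension-closedness of $\T$ is exactly alternative (b) of Theorem \ref{main}, and the converse direction is Koenig--Zhu. But the implication you isolate as the whole content of the corollary --- that $\B/\T$ abelian forces $F=\Hom_{\underline\B}(\Omega\T,-)$ to be a quotient functor --- is never proved: you only announce that you would verify conditions (a) and (b) of Theorem \ref{main1}, and the method you sketch cannot work as stated. The ideal defining $\B/\T$ consists of morphisms factoring through $\T$, while the ideal killed by $F$ is $\{f:\EE(\T,f)=0\}$. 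By the first part of the proof of Proposition \ref{eq2} (which applies here, since extension-closedness, functorial finiteness and WIC give the cotorsion pair $(\T,\K)$), the latter ideal coincides with the ideal of morphisms factoring through the \emph{object-kernel} of $F$, which is $\K=\{Y:\EE(\T,Y)=0\}$, not $\T$. Hence ``$\B/\T$ abelian'' carries no information about $\B/\ker F\simeq\B/\K$ until one already knows $\T=\K$, i.e.\ that $\T$ is rigid and equals its perpendicular category --- which is precisely the cluster-tilting conclusion. Your plan of lifting kernels and cokernels from $\B/\T$ to obtain (a) and (b) for $\T$ is therefore circular at the decisive point; you flag this as ``the delicate heart'' but do not resolve it. (A side remark: your disposal of the case $\T=0$ via ``a nontrivial triangulated category is never abelian'' is also false --- $\underline{\mod}\,k[\epsilon]/(\epsilon^2)$ is a connected $2$-Calabi--Yau triangulated category that is semisimple abelian; this degenerate case is excluded by the nonzero hypothesis in Theorem \ref{main}.)

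The paper (via Corollary \ref{all abelian}, whose proof combines Proposition \ref{eq2}, its dual and Theorem \ref{main}) avoids the circle by applying the machinery to $\K$ instead of $\T$. From the two cotorsion pairs $(\T,\K)$ and $(\K,\T)$, Proposition \ref{eq2} is applied with $\K$ in the role of the approximating subcategory: its object-kernel is then $\{Y:\EE(\K,Y)=0\}=\T$ by the cotorsion pair $(\K,\T)$, so the ideal killed by $\Hom_{\underline\B}(\Omega\K,-)$ is exactly the ideal of morphisms factoring through $\T$, and one gets: $\B/\T$ is abelian if and only if $\Hom_{\underline\B}(\Omega\K,-)$ is a quotient functor (dually for $\Hom_{\overline\B}(-,\Sigma\K)$, using the Calabi--Yau duality as you do). Theorem \ref{main} is then applied to $\K$, which is automatically extension closed because it is a perpendicular category, so $\K$ is cluster-tilting; finally $\T=\{Y:\EE(\K,Y)=0\}=\K$, whence $\T$ is cluster-tilting. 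If you replace your unproven bridging step by this application of Proposition \ref{eq2} to $\K$ rather than to $\T$, your argument becomes the paper's proof.
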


\begin{cor}
Let $\B$ be a connected, Krull-Schmidt, Hom-finte, $k$-linear, 2-Calabi-Yau triangulated category with suspension functor $\Sigma$, $\T=\add T$ be a functorially finite subcategory. If $\Hom_{\B}(T,-)$ is full and dense, then $T$ is a cluster tilting object or $\End_{\B}(T)^{\op}\simeq k$.
\end{cor}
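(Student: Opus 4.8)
The plan is to deduce the statement from Theorem \ref{main} by specialising the extriangulated machinery to the triangulated $2$-Calabi--Yau setting. First I would record the translation of all the ingredients. A triangulated category, viewed as extriangulated with $\EE(A,B)=\Hom_{\B}(A,\Sigma B)$, has $\mathcal P=\mathcal I=0$, so $\underline{\B}=\overline{\B}=\B$; moreover $\Omega\T=\CoCone(\mathcal P,\T)=\Sigma^{-1}\T$, and $\Sigma\T=\Cone(\T,\mathcal I)$ is the honest suspension. Hence $\Hom_{\underline{\B}}(\Omega\T,-)\cong\Hom_{\B}(T,\Sigma-)=\Hom_{\B}(T,-)\circ\Sigma$, and since $\Sigma$ is an autoequivalence this functor is full and dense exactly when $\Hom_{\B}(T,-)$ is, which is our hypothesis. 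Writing $D=\Hom_k(-,k)$ and using the $2$-Calabi--Yau duality $\Hom_{\B}(A,B)\cong D\Hom_{\B}(B,\Sigma^2 A)$, one computes $\Hom_{\B}(B,\Sigma T)\cong D\Hom_{\B}(\Sigma^{-1}T,B)$. This single isomorphism gives two things at once: condition (3) of Theorem \ref{main} (one side vanishes iff the other does), and the identity $\Hom_{\overline{\B}}(-,\Sigma\T)\cong D\circ\Hom_{\underline{\B}}(\Omega\T,-)$, so that the second functor is a quotient functor precisely because the first one is. Conditions (1) and (2) are automatic since $\mathcal P=\mathcal I=0\subset\T$, and $\B/(\mathcal P\cap\mathcal I)=\B$ is connected by assumption. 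We may assume $T\neq 0$, since otherwise the target of $\Hom_{\B}(T,-)$ is zero and the asserted conclusion is excluded implicitly.

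With all hypotheses verified, I would apply Theorem \ref{main}: $\T$ is cluster tilting if and only if $\T$ is extension closed, equivalently $\T\cap\Omega\T=\mathcal P=0$. If $\T$ happens to be extension closed we are in the first alternative and $T$ is a cluster-tilting object, so it remains to treat the case where $\T$ is \emph{not} extension closed and to prove that then $\End_{\B}(T)^{\op}\cong k$.

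In that case $\T\cap\Omega\T\neq 0$, so there is an indecomposable $X$ with $X\in\add T$ and $\Sigma X\in\add T$ (recall $\Omega\T=\Sigma^{-1}\T$, and $\Sigma$ preserves indecomposability). Put $\Lambda=\End_{\B}(T)^{\op}$ and $F=\Hom_{\B}(T,-)$, so that $F$ induces an equivalence $\B/\J\simeq\mod\Lambda$, is fully faithful on $\add T$, and sends the summands $T_i$ to the indecomposable projectives $\Hom_{\B}(T,T_i)$. Using $2$-Calabi--Yau duality again, $\Hom_{\B}(T,\Sigma^2 T_i)\cong D\Hom_{\B}(T_i,T)$, so the $F(\Sigma^2 T_i)$ are exactly the indecomposable injectives of $\mod\Lambda$. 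The plan for this degenerate case is to exploit the Auslander--Reiten triangle ending at $X$, which in a $2$-Calabi--Yau category ($\tau=\Sigma$) reads $\Sigma X\to E\to X\to\Sigma^2 X$ and has \emph{both} outer terms in $\add T$. Applying the cohomological functor $F$ yields a long exact sequence in $\mod\Lambda$ relating the projective $F(X)$, the injective $F(\Sigma^2 X)$ and $F(E)$; the point I would establish is that the existence of $X$ with $\Sigma X\in\add T$ forces the indecomposable projective $F(X)$ to be simultaneously injective and to be a block of $\mod\Lambda$ by itself. Connectedness of $\B$ transfers to indecomposability of $\Lambda$, so this block must be all of $\mod\Lambda$; together with $F|_{\add T}$ being fully faithful this pins $\Lambda$ down to a single simple with $\End_{\B}(X)=k$, that is $\End_{\B}(T)^{\op}\cong k$.

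The routine parts are the dictionary of the first paragraph and the $2$-Calabi--Yau identifications of projectives and injectives. The main obstacle is the last step: controlling the non-extension-closed situation and squeezing $\Lambda\cong k$ out of it. Concretely, the delicate claim is that a single indecomposable summand $X$ with $\Sigma X\in\add T$ already makes $F(X)$ a projective-injective block, and that connectedness then rules out any further summand; this is where the interaction between the Auslander--Reiten structure, Serre duality, and the hypothesis that both $\Hom_{\underline{\B}}(\Omega\T,-)$ and $\Hom_{\overline{\B}}(-,\Sigma\T)$ are quotient functors has to be used in full.
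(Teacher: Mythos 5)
Your opening paragraph and the first alternative are essentially correct: the dictionary $\mathcal P=\mathcal I=0$, $\Omega\T=\Sigma^{-1}\T$, the Serre-duality verification of condition (3) and of the fact that $\Hom_{\overline{\B}}(-,\Sigma\T)$ is a quotient functor, and the conclusion via Theorem \ref{main} that $\T$ is cluster-tilting when $\T\cap\Omega\T=0$ (equivalently, when $\T$ is extension closed) are all sound. The paper reaches this first alternative by a longer route --- Corollary \ref{rigid}, the cotorsion pairs $(\T,\K)$ and $(\K,\T)$, Lemma \ref{stable} combined with Proposition 4.16(i) of \cite{B} to see that $\B/\K$ is abelian, and then Corollary \ref{extension closed} --- but your direct appeal to Theorem \ref{main} is legitimate and cleaner.

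The genuine gap is the entire second case $\T\cap\Sigma^{-1}\T\neq 0$, and you flag it yourself as the ``delicate claim'': the proposal stops exactly where the real work begins, whereas the paper disposes of this case by invoking the proof of \cite[Theorem 27]{GJ} (observing that for $T\cap\Sigma T\neq 0$ that proof never uses finiteness of the number of indecomposables). Moreover, two of the steps you would need are unjustified or false as stated (keeping your notation $F=\Hom_{\B}(T,-)$, $\Lambda=\End_{\B}(T)^{\op}$). First, $F(X)$ need not be injective: by your own identification the indecomposable injectives of $\mod\Lambda$ are the modules $F(\Sigma^{2}T_i)$, so injectivity of $F(X)$ would require roughly $\Sigma^{-2}X\in\add T$, while the hypothesis gives only $\Sigma X\in\add T$; symmetrically $F(\Sigma^{2}X)$ is injective, but nothing makes it projective. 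Second, ``connectedness of $\B$ transfers to indecomposability of $\Lambda$'' fails for the notion of connectedness used in this paper (no decomposition with vanishing $\Hom$ \emph{and} vanishing $\EE$): a block decomposition of $\mod\Lambda\simeq\B/\J$ sees only morphisms, and the corresponding pieces of $\B$ can still be glued by nonzero extensions, which are invisible after factoring out $\J$. A concrete configuration of exactly this kind is the cluster category of type $A_1$ with $T=k\oplus\Sigma k$: every indecomposable and its shift lie in $\add T$, $\Hom_{\B}(T,-)$ is full and dense, the category is connected in the Hom-plus-extension sense because $\EE(k,\Sigma k)\neq 0$, yet $\End_{\B}(T)^{\op}\cong k\times k$ is decomposable. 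So whatever argument settles this case must use connectedness in a stronger form than the transfer you propose (this is precisely what the nontrivial argument cited from \cite{GJ} supplies). As it stands, your proposal proves only the first alternative of the corollary.
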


This article is organized as follows. In Section 2, we review some elementary definitions and facts of
extriangulated category that we need. In Section 3, we prove our first main result and give some examples. In Section 4, we prove our second main result.

\section{Preliminaries}
Let us briefly recall the definition and basic properties of extriangulated categories from \cite{NP}. Throughout this paper, we assume that $\B$ is an additive category.

\begin{defn}
Suppose that $\B$ is equipped with an additive bifunctor $\mathbb{E}\colon\B^\mathrm{op}\times\B\to\Ab$, where $\Ab$ is the category of abelian groups. For any pair of objects $A,C\in\B$, an element $\delta\in\mathbb{E}(C,A)$ is called an {\it $\mathbb{E}$-extension}. Thus formally, an $\EE$-extension is a triplet $(A,\delta,C)$.
For any $A,C\in\C$, the zero element $0\in\EE(C,A)$ is called the \emph{spilt $\EE$-extension}.

Let $\delta\in\mathbb{E}(C,A)$ be any $\mathbb{E}$-extension. By the functoriality, for any $a\in\B(A,A^{\prime})$ and $c\in\B(C^{\prime},C)$, we have $\mathbb{E}$-extensions
\[ \mathbb{E}(C,a)(\delta)\in\mathbb{E}(C,A^{\prime})\ \ \text{and}\ \ \mathbb{E}(c,A)(\delta)\in\mathbb{E}(C^{\prime},A). \]
We abbreviately denote them by $a_{\ast}\delta$ and $c^{\ast}\delta$.
In this terminology, we have
\[ \mathbb{E}(c,a)(\delta)=c^{\ast} a_{\ast}\delta=a_{\ast} c^{\ast}\delta \]
in $\mathbb{E}(C^{\prime},A^{\prime})$.

\end{defn}

\begin{defn}
Let $\delta\in\mathbb{E}(C,A)$ and $\delta^{\prime}\in\mathbb{E}(C^{\prime},A^{\prime})$ be two pair of $\mathbb{E}$-extensions. A {\it morphism} $(a,c)\colon\delta\to\delta^{\prime}$ of $\mathbb{E}$-extensions is a pair of morphisms $a\in\B(A,A^{\prime})$ and $c\in\B(C,C^{\prime})$ in $\B$, satisfying the equality
\[ a_{\ast}\delta=c^{\ast}\delta^{\prime}. \]
We simply denote it as $(a,c)\colon\delta\to\delta^{\prime}$.
\end{defn}

\begin{defn}
Let $\delta=(A,\delta,C)$ and $\delta^{\prime}=(A^{\prime},\delta^{\prime},C^{\prime})$ be any pair of $\mathbb{E}$-extensions. Let
\[ C\xrightarrow{~\iota_C~}C\oplus C^{\prime}\xleftarrow{~\iota_{C^{\prime}}~}C^{\prime} \]
and
\[ A\xrightarrow{~p_A~}A\oplus A^{\prime}\xleftarrow{~p_{A^{\prime}}~}A^{\prime} \]
be coproduct and product in $\B$, respectively. Remark that, by the additivity of $\mathbb{E}$, we have a natural isomorphism
\[ \mathbb{E}(C\oplus C^{\prime},A\oplus A^{\prime})\simeq \mathbb{E}(C,A)\oplus\mathbb{E}(C,A^{\prime})\oplus\mathbb{E}(C^{\prime},A)\oplus\mathbb{E}(C^{\prime},A^{\prime}). \]

Let $\delta\oplus\delta^{\prime}\in\mathbb{E}(C\oplus C^{\prime},A\oplus A^{\prime})$ be the element corresponding to $(\delta,0,0,\delta^{\prime})$ through this isomorphism. This is the unique element which satisfies
$$
\mathbb{E}(\iota_C,p_A)(\delta\oplus\delta^{\prime})=\delta,\ \mathbb{E}(\iota_C,p_{A^{\prime}})(\delta\oplus\delta^{\prime})=0,\
\mathbb{E}(\iota_{C^{\prime}},p_A)(\delta\oplus\delta^{\prime})=0,\ \mathbb{E}(\iota_{C^{\prime}},p_{A^{\prime}})(\delta\oplus\delta^{\prime})=\delta^{\prime}.
$$
\end{defn}

\begin{defn}
Let $A,C\in\B$ be any pair of objects. Two sequences of morphisms in $\B$
\[ A\overset{x}{\longrightarrow}B\overset{y}{\longrightarrow}C\ \ \text{and}\ \ A\overset{x^{\prime}}{\longrightarrow}B^{\prime}\overset{y^{\prime}}{\longrightarrow}C \]
are said to be {\it equivalent} if there exists an isomorphism $b\in\B(B,B^{\prime})$ which makes the following diagram commutative.
\[
\xy
(-16,0)*+{A}="0";
(3,0)*+{}="1";
(0,8)*+{B}="2";
(0,-8)*+{B^{\prime}}="4";
(-3,0)*+{}="5";
(16,0)*+{C}="6";
{\ar^{x} "0";"2"};
{\ar^{y} "2";"6"};
{\ar_{x^{\prime}} "0";"4"};
{\ar_{y^{\prime}} "4";"6"};
{\ar^{b}_{\simeq} "2";"4"};
{\ar@{}|{} "0";"1"};
{\ar@{}|{} "5";"6"};
\endxy
\]

We denote the equivalence class of $A\overset{x}{\longrightarrow}B\overset{y}{\longrightarrow}C$ by $[A\overset{x}{\longrightarrow}B\overset{y}{\longrightarrow}C]$.
\end{defn}

\begin{defn}
$\ \ $
\begin{enumerate}
\item[(1)] For any $A,C\in\B$, we denote as
\[ 0=[A\overset{\Big[\raise1ex\hbox{\leavevmode\vtop{\baselineskip-8ex \lineskip1ex \ialign{#\crcr{$\scriptstyle{1}$}\crcr{$\scriptstyle{0}$}\crcr}}}\Big]}{\longrightarrow}A\oplus C\overset{[0\ 1]}{\longrightarrow}C]. \]

\item[(2)] For any $[A\overset{x}{\longrightarrow}B\overset{y}{\longrightarrow}C]$ and $[A^{\prime}\overset{x^{\prime}}{\longrightarrow}B^{\prime}\overset{y^{\prime}}{\longrightarrow}C^{\prime}]$, we denote as
\[ [A\overset{x}{\longrightarrow}B\overset{y}{\longrightarrow}C]\oplus [A^{\prime}\overset{x^{\prime}}{\longrightarrow}B^{\prime}\overset{y^{\prime}}{\longrightarrow}C^{\prime}]=[A\oplus A^{\prime}\overset{x\oplus x^{\prime}}{\longrightarrow}B\oplus B^{\prime}\overset{y\oplus y^{\prime}}{\longrightarrow}C\oplus C^{\prime}]. \]
\end{enumerate}
\end{defn}

\begin{defn}
Let $\mathfrak{s}$ be a correspondence which associates an equivalence class $\mathfrak{s}(\delta)=[A\overset{x}{\longrightarrow}B\overset{y}{\longrightarrow}C]$ to any $\mathbb{E}$-extension $\delta\in\mathbb{E}(C,A)$. This $\mathfrak{s}$ is called a {\it realization} of $\mathbb{E}$, if it satisfies the following condition $(\star)$. In this case, we say that the sequence $A\overset{x}{\longrightarrow}B\overset{y}{\longrightarrow}C$ {\it realizes} $\delta$, whenever it satisfies $\mathfrak{s}(\delta)=[A\overset{x}{\longrightarrow}B\overset{y}{\longrightarrow}C]$.
\begin{itemize}
\item[$(\star)$] Let $\delta\in\mathbb{E}(C,A)$ and $\delta^{\prime}\in\mathbb{E}(C^{\prime},A^{\prime})$ be any pair of $\mathbb{E}$-extensions, with
\[\mathfrak{s}(\delta)=[A\overset{x}{\longrightarrow}B\overset{y}{\longrightarrow}C]\text{ and } \mathfrak{s}(\delta^{\prime})=[A^{\prime}\overset{x^{\prime}}{\longrightarrow}B^{\prime}\overset{y^{\prime}}{\longrightarrow}C^{\prime}].\]
Then, for any morphism $(a,c)\colon\delta\to\delta^{\prime}$, there exists $b\in\B(B,B^{\prime})$ which makes the following diagram commutative.
$$
\xy
(-12,6)*+{A}="0";
(0,6)*+{B}="2";
(12,6)*+{C}="4";
(-12,-6)*+{A^{\prime}}="10";
(0,-6)*+{B^{\prime}}="12";
(12,-6)*+{C^{\prime}}="14";
{\ar^{x} "0";"2"};
{\ar^{y} "2";"4"};
{\ar_{a} "0";"10"};
{\ar^{b} "2";"12"};
{\ar^{c} "4";"14"};
{\ar^{x^{\prime}} "10";"12"};
{\ar^{y^{\prime}} "12";"14"};
{\ar@{}|{} "0";"12"};
{\ar@{}|{} "2";"14"};
\endxy
$$
\end{itemize}
In the above situation, we say that the triplet $(a,b,c)$ {\it realizes} $(a,c)$.
\end{defn}

\begin{defn}
Let $\B,\mathbb{E}$ be as above. A realization of $\mathbb{E}$ is said to be {\it additive}, if it satisfies the following conditions.
\begin{itemize}
\item[{\rm (i)}] For any $A,C\in\B$, the split $\mathbb{E}$-extension $0\in\mathbb{E}(C,A)$ satisfies
\[ \mathfrak{s}(0)=0. \]
\item[{\rm (ii)}] For any pair of $\mathbb{E}$-extensions $\delta\in\mathbb{E}(C,A)$ and $\delta^{\prime}\in\mathbb{E}(C^{\prime},A^{\prime})$, we have
\[ \mathfrak{s}(\delta\oplus\delta^{\prime})=\mathfrak{s}(\delta)\oplus\mathfrak{s}(\delta^{\prime}). \]
\end{itemize}
\end{defn}

\begin{defn}\cite[Definition 2.12]{NP}
A triplet $(\B,\mathbb{E},\mathfrak{s})$ is called an {\it extriangulated category} if it satisfies the following conditions.
\begin{itemize}
\item[{\rm (ET1)}] $\mathbb{E}\colon\B^{\mathrm{op}}\times\B\to\Ab$ is an additive bifunctor.
\item[{\rm (ET2)}] $\mathfrak{s}$ is an additive realization of $\mathbb{E}$.
\item[{\rm (ET3)}] Let $\delta\in\mathbb{E}(C,A)$ and $\delta^{\prime}\in\mathbb{E}(C^{\prime},A^{\prime})$ be any pair of $\mathbb{E}$-extensions, realized as
\[ \mathfrak{s}(\delta)=[A\overset{x}{\longrightarrow}B\overset{y}{\longrightarrow}C],\ \ \mathfrak{s}(\delta^{\prime})=[A^{\prime}\overset{x^{\prime}}{\longrightarrow}B^{\prime}\overset{y^{\prime}}{\longrightarrow}C^{\prime}]. \]
For any commutative square
$$
\xy
(-12,6)*+{A}="0";
(0,6)*+{B}="2";
(12,6)*+{C}="4";
(-12,-6)*+{A^{\prime}}="10";
(0,-6)*+{B^{\prime}}="12";
(12,-6)*+{C^{\prime}}="14";
{\ar^{x} "0";"2"};
{\ar^{y} "2";"4"};
{\ar_{a} "0";"10"};
{\ar^{b} "2";"12"};
{\ar^{x^{\prime}} "10";"12"};
{\ar^{y^{\prime}} "12";"14"};
{\ar@{}|{} "0";"12"};
\endxy
$$
in $\B$, there exists a morphism $(a,c)\colon\delta\to\delta^{\prime}$ satisfying $cy=y^{\prime}b$.
\item[{\rm (ET3)$^{\mathrm{op}}$}] Dual of {\rm (ET3)}.
\item[{\rm (ET4)}] Let $\delta\in\mathbb{E}(D,A)$ and $\delta^{\prime}\in\mathbb{E}(F,B)$ be $\mathbb{E}$-extensions realized by
\[ A\overset{f}{\longrightarrow}B\overset{f^{\prime}}{\longrightarrow}D\ \ \text{and}\ \ B\overset{g}{\longrightarrow}C\overset{g^{\prime}}{\longrightarrow}F \]
respectively. Then there exist an object $E\in\B$, a commutative diagram
$$
\xy
(-21,7)*+{A}="0";
(-7,7)*+{B}="2";
(7,7)*+{D}="4";
(-21,-7)*+{A}="10";
(-7,-7)*+{C}="12";
(7,-7)*+{E}="14";
(-7,-21)*+{F}="22";
(7,-21)*+{F}="24";
{\ar^{f} "0";"2"};
{\ar^{f^{\prime}} "2";"4"};
{\ar@{=} "0";"10"};
{\ar_{g} "2";"12"};
{\ar^{d} "4";"14"};
{\ar^{h} "10";"12"};
{\ar^{h^{\prime}} "12";"14"};
{\ar_{g^{\prime}} "12";"22"};
{\ar^{e} "14";"24"};
{\ar@{=} "22";"24"};
{\ar@{}|{} "0";"12"};
{\ar@{}|{} "2";"14"};
{\ar@{}|{} "12";"24"};
\endxy
$$
in $\B$, and an $\mathbb{E}$-extension $\delta^{\prime\prime}\in\mathbb{E}(E,A)$ realized by $A\overset{h}{\longrightarrow}C\overset{h^{\prime}}{\longrightarrow}E$, which satisfy the following compatibilities.
\begin{itemize}
\item[{\rm (i)}] $D\overset{d}{\longrightarrow}E\overset{e}{\longrightarrow}F$ realizes $f^{\prime}_{\ast}\delta^{\prime}$,
\item[{\rm (ii)}] $d^{\ast}\delta^{\prime\prime}=\delta$,

\item[{\rm (iii)}] $f_{\ast}\delta^{\prime\prime}=e^{\ast}\delta^{\prime}$.
\end{itemize}

\item[{\rm (ET4)$^{\mathrm{op}}$}] Dual of {\rm (ET4)}.
\end{itemize}
\end{defn}

\begin{rem}
Note that both exact categories and triangulated categories are extriangulated categories, see \cite[Example 2.13]{NP} and extension-closed subcategories of extriangulated categories are
again extriangulated, see \cite[Remark 2.18]{NP} . Moreover, there exist extriangulated categories which
are neither exact categories nor triangulated categories, see \cite[Proposition 3.30]{NP} and \cite[Example 4.14]{ZZ}.
\end{rem}

We will use the following terminology.
\begin{defn}{\cite{NP}}
Let $(\B,\EE,\mathfrak{s})$ be an extriangulated category.
\begin{itemize}
\item[(1)] A sequence $A\xrightarrow{~x~}B\xrightarrow{~y~}C$ is called a {\it conflation} if it realizes some $\EE$-extension $\del\in\EE(C,A)$. In this case, $x$ is called an {\it inflation} and $y$ is called a {\it deflation}.

\item[(2)] If a conflation  $A\xrightarrow{~x~}B\xrightarrow{~y~}C$ realizes $\delta\in\mathbb{E}(C,A)$, we call the pair $( A\xrightarrow{~x~}B\xrightarrow{~y~}C,\delta)$ an {\it $\EE$-triangle}, and write it in the following way.
$$A\overset{x}{\longrightarrow}B\overset{y}{\longrightarrow}C\overset{\delta}{\dashrightarrow}$$
We usually do not write this $``\delta"$ if it is not used in the argument.
\item[(3)] Let $A\overset{x}{\longrightarrow}B\overset{y}{\longrightarrow}C\overset{\delta}{\dashrightarrow}$ and $A^{\prime}\overset{x^{\prime}}{\longrightarrow}B^{\prime}\overset{y^{\prime}}{\longrightarrow}C^{\prime}\overset{\delta^{\prime}}{\dashrightarrow}$ be any pair of $\EE$-triangles. If a triplet $(a,b,c)$ realizes $(a,c)\colon\delta\to\delta^{\prime}$, then we write it as
$$\xymatrix{
A \ar[r]^x \ar[d]^a & B\ar[r]^y \ar[d]^{b} & C\ar@{-->}[r]^{\del}\ar[d]^c&\\
A'\ar[r]^{x'} & B' \ar[r]^{y'} & C'\ar@{-->}[r]^{\del'} &}$$
and call $(a,b,c)$ a {\it morphism of $\EE$-triangles}.

\item[(4)] An object $P\in\B$ is called {\it projective} if
for any $\EE$-triangle $A\overset{x}{\longrightarrow}B\overset{y}{\longrightarrow}C\overset{\delta}{\dashrightarrow}$ and any morphism $c\in\B(P,C)$, there exists $b\in\B(P,B)$ satisfying $yb=c$.
We denote the subcategory of projective objects by $\mathcal P\subseteq\B$. Dually, the subcategory of injective objects is denoted by $\I\subseteq\B$.

\item[(5)] We say that $\B$ {\it has enough projective objects} if
for any object $C\in\B$, there exists an $\EE$-triangle
$A\overset{x}{\longrightarrow}P\overset{y}{\longrightarrow}C\overset{\delta}{\dashrightarrow}$
satisfying $P\in\mathcal P$. Dually we can define $\B$ {\it has enough injective objects}.

\item[(6)] Let $\mathcal{X}$ be a subcategory of $\B$. We say $\mathcal{X}$ is {\it extension-closed}
if a conflation $A\xrightarrow{~~}B\xrightarrow{~~}C$satisfies $A,C\in\mathcal{X}$, then $B\in\mathcal{X}$.
\end{itemize}
\end{defn}

In this article, we always assume $\B$ has enough projectives and enough injectives.
\medskip

By \cite{NP}, we give the following useful remark, which will be used later in the proofs.

\begin{rem}\label{useful}
Let $\xymatrix{A\ar[r]^a &B \ar[r]^b &C \ar@{-->}[r] &}$ and $\xymatrix{X\ar[r]^x &Y \ar[r]^y &Z \ar@{-->}[r] &}$ be two $\EE$-triangles. Then
\begin{itemize}
\item[(a)] In this following commutative diagram
$$\xymatrix{
X\ar[r]^x \ar[d]_f &Y \ar[d]^g \ar[r]^y &Z \ar[d]^h \ar@{-->}[r] &\\
A\ar[r]^a &B \ar[r]^b &C \ar@{-->}[r] &}
$$
$f$ factors through $x$ if and only if $h$ factors through $b$.
\item[(b)] In the following commutative diagram
$$\xymatrix{
A\ar[r]^a \ar[d]_s &B \ar[d]^r \ar[r]^b &C \ar[d]^t \ar@{-->}[r] &\\
X\ar[r]^x \ar[d]_f &Y \ar[d]^g \ar[r]^y &Z \ar[d]^h \ar@{-->}[r] &\\
A\ar[r]^a &B \ar[r]^b &C \ar@{-->}[r] &}
$$
$fs=1_A$ implies $B$ is a direct summand of $C\oplus Y$ and $C$ is a direct summand of $Z\oplus B$; $ht=1_C$ implies $B$ is a direct summand of $A\oplus Y$ and $A$ is a direct summand of $X\oplus B$.
\item[(c)] If we have $b:B\xrightarrow{d_1} D \xrightarrow{d_2} C$ and $d_2: D\xrightarrow{d_3} B\xrightarrow{b} C$, then $B$ is a direct summand of $A\oplus D$.\\
If we have $a: A\xrightarrow{e_1} E\xrightarrow{e_2} B$ and $e_1:A \xrightarrow{a} B\xrightarrow{e_3} E$, then $B$ is a direct summand of $C\oplus E$.
\item[(d)] $\Hom_{\underline \B}(\Omega \T,X)=0$ if and only if $\EE(\T,X)=0$, $\Hom_{\overline \B}(Y,\Sigma \T)=0$ if and only if $\EE(Y,\T)=0$.
\end{itemize}
\end{rem}

We recall the following proposition (\cite[Proposition 1.20]{LN}), which (also the dual of it) will be used many times in the article.

\begin{prop}
Let $A\overset{x}{\longrightarrow}B\overset{y}{\longrightarrow}C\overset{\delta}{\dashrightarrow}$ be any $\EE$-triangle, let $f\colon A\rightarrow D$ be any morphism, and let $D\overset{d}{\longrightarrow}E\overset{e}{\longrightarrow}C\overset{f_{\ast}\delta}{\dashrightarrow}$ be any $\EE$-triangle realizing $f_{\ast}\delta$. Then there is a morphism $g$ which gives a morphism of $\EE$-triangles
$$\xymatrix{
A \ar[r]^{x} \ar[d]_f &B \ar[r]^{y} \ar[d]^g &C \ar@{=}[d]\ar@{-->}[r]^{\delta}&\\
D \ar[r]_{d} &E \ar[r]_{e} &C\ar@{-->}[r]_{f_{\ast}\delta}&
}
$$
and moreover, the sequence $A\xrightarrow{\svecv{f}{x}}D\oplus B\xrightarrow{\svech{d}{-g}}E\overset{e^{\ast}\delta}{\dashrightarrow}$ becomes an $\EE$-triangle.
\end{prop}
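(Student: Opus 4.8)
The plan is to split the statement into its two assertions: the existence of the morphism $g$ completing the square, and the ``moreover'' part identifying the homotopy–pushout conflation.

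First I would produce $g$ directly from the realization axiom. Since $f_{\ast}\delta=\EE(C,f)(\delta)$, the pair $(f,1_C)$ satisfies $f_{\ast}\delta=(1_C)^{\ast}(f_{\ast}\delta)$ and is therefore a morphism of $\EE$-extensions $\delta\to f_{\ast}\delta$. Applying the defining property $(\star)$ of the realization $\mathfrak{s}$ to this morphism yields $g\in\B(B,E)$ making the required diagram of $\EE$-triangles commute, i.e. $gx=df$ and $eg=y$ (the latter because the third component is $1_C$). In particular $\svech{d}{-g}\,\svecv{f}{x}=df-gx=0$, so the candidate sequence $A\xrightarrow{\svecv{f}{x}}D\oplus B\xrightarrow{\svech{d}{-g}}E$ is already a complex; it remains to see that it is a conflation realizing $e^{\ast}\delta$.

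For the second assertion I would recognize the desired sequence as the mapping cone of the morphism of $\EE$-triangles $(f,g,1_C)$, in the special case where the third component is the identity. The tool is the octahedron axiom (ET4): I would feed it the $\EE$-triangle over $\delta$ together with the given realization $D\xrightarrow{d}E\xrightarrow{e}C\overset{f_{\ast}\delta}{\dashrightarrow}$ of $f_{\ast}\delta$, arranged so that the third $\EE$-triangle produced by the octahedron is exactly $A\xrightarrow{\svecv{f}{x}}D\oplus B\xrightarrow{\svech{d}{-g}}E\dashrightarrow$. The compatibility relations (i)--(iii) of (ET4) would then be used to pin down the realized $\EE$-extension as $e^{\ast}\delta$ (as a pullback of $\delta$ along the deflation $e$) and to check that the induced connecting morphism coincides with the $g$ built above, up to the normalization afforded by the fact that any two $\EE$-triangles realizing $f_{\ast}\delta$ are equivalent. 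Matching the two middle objects then forces the inflation to be $\svecv{f}{x}$ and the deflation to be $\svech{d}{-g}$.

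The main obstacle I anticipate is precisely this octahedral bookkeeping. Because $f$ is in general neither an inflation nor a deflation, it cannot be fed into (ET4) directly; it enters only through the pushforward $f_{\ast}\delta$ and through $g$. Setting up the octahedron so that $f$ reappears in the correct matrix position typically requires first augmenting the $\delta$-triangle by a split (trivial) $\EE$-triangle on $D$ to create a common middle object, and then applying (ET4) (or its dual (ET4)$^{\mathrm{op}}$) in a two-step fashion; keeping track of the induced maps, the signs in $\svech{d}{-g}$, and the identification of the connecting extension with $e^{\ast}\delta$ is the delicate part. Once the conflation is obtained for one realization of $f_{\ast}\delta$, transporting it along the canonical isomorphism to the given realization $E$ (and redefining $g$ as the transported map, which still satisfies $gx=df$ and $eg=y$) completes the argument.
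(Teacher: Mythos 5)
The paper does not prove this proposition at all: it is quoted from \cite[Proposition 1.20]{LN}, so there is no internal argument to compare yours with, and the proposal must stand on its own. Its first half does: since $f_{*}\delta=(1_C)^{*}(f_{*}\delta)$, the pair $(f,1_C)$ is a morphism of $\EE$-extensions $\delta\to f_{*}\delta$, and condition $(\star)$ in the definition of a realization yields $g\in\B(B,E)$ with $gx=df$ and $eg=y$. That is the standard argument and is complete.

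The ``moreover'' half, however, is only announced, and what you postpone as ``octahedral bookkeeping'' is the entire proof. A working implementation of your idea runs as follows, and none of it appears in your text. Factor $\svecv{f}{x}=(1_D\oplus x)\circ\svecv{f}{1_A}$ as a composite of two inflations: $A\xrightarrow{\svecv{f}{1_A}}D\oplus A\xrightarrow{q}D$ with $q=\svech{-1_D}{f}$ is a split $\EE$-triangle realizing $0$, and $D\oplus A\xrightarrow{1_D\oplus x}D\oplus B\xrightarrow{\svech{0}{y}}C$ realizes $(\iota_A)_{*}\delta$ by additivity of $\mathfrak{s}$. Feeding this pair into (ET4) returns an object $E'$, an $\EE$-triangle $A\xrightarrow{\svecv{f}{x}}D\oplus B\xrightarrow{h'}E'\overset{\delta''}{\dashrightarrow}$, and an $\EE$-triangle $D\xrightarrow{d'}E'\xrightarrow{e'}C$ realizing $q_{*}(\iota_A)_{*}\delta=(q\iota_A)_{*}\delta=f_{*}\delta$; commutativity of the (ET4) diagram identifies the components of $h'$ (namely $h'\iota_D=-d'$, $(h'\iota_B)x=d'f$, $e'(h'\iota_B)=y$), compatibility (iii) followed by pushforward along the projection $D\oplus A\to A$ gives $\delta''=(e')^{*}\delta$, and one finally transports along an equivalence between $D\xrightarrow{d'}E'\xrightarrow{e'}C$ and the given realization $D\xrightarrow{d}E\xrightarrow{e}C$, re-choosing $g$ as the transported component. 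This re-choice is forced: $g$ is unique only up to summands of the form $d\xi y$, so your initial aim of showing the induced map ``coincides with the $g$ built above'' is neither achievable nor needed, as your last sentence half-concedes. You never specify which two $\EE$-triangles are fed into (ET4) --- the only genuine decision in the proof --- nor extract the matrix form of the deflation, nor derive the identification of the realized extension; asserting that (i)--(iii) ``would then be used'' is not a proof. Be warned, moreover, that the sign normalization you defer does not come out as hoped: the construction above produces the $\EE$-triangle $A\xrightarrow{\svecv{f}{x}}D\oplus B\xrightarrow{\svech{-d}{g}}E\overset{e^{*}\delta}{\dashrightarrow}$, whereas the displayed sequence $A\xrightarrow{\svecv{f}{x}}D\oplus B\xrightarrow{\svech{d}{-g}}E$, which differs by negating the deflation, realizes $-e^{*}\delta$; already in the category of abelian groups, with the usual pushout and pullback conventions, the displayed sequence represents $-e^{*}\delta$. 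Both are conflations, and the conflation property is all this paper ever uses, but the step you promise --- pinning down the realized extension as $e^{*}\delta$ for the sequence exactly as displayed --- is precisely the step that fails, so a complete write-up must either adjust the sign or prove the statement up to this (harmless) normalization, and must say so explicitly.
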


\section{The First Main Result}

From now on, for convenience, we denote $\Hom_{\underline \B}(\Omega \T,-)$ by $H$ and subcategory $\{Y \text{ }| \text{ } \Hom_{\underline \B}(\Omega \T,Y)= 0 \}$ by $\K$.
In this article, we assume that $\T$ is a contravariantly finite subcategory of $\B$ such that $\mathcal P\subset \T$ and $\EE(\T,\mathcal P)=0$.

\begin{lem}\label{contra}
The subcategory $\Omega \T$ is contravariantly finite in $\B$.
\end{lem}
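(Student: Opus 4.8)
The plan is to deduce the contravariant finiteness of $\Omega\T$ from that of $\T$ by means of the syzygy--cosyzygy comparison. Fix an object $B\in\B$; I must produce a right $\Omega\T$-approximation of it. The starting observation, refining Remark~\ref{useful}(d), is that the defining $\EE$-triangle $\Omega T\to P\to T\dashrightarrow$ together with $\EE(P,-)=0$ gives a natural isomorphism $\Hom_{\underline \B}(\Omega T,B)\cong\EE(T,B)$, while a cosyzygy triangle $B\to I\to\Sigma B\dashrightarrow$ with $I\in\mathcal I$ and $\EE(-,I)=0$ gives $\EE(T,B)\cong\Hom_{\overline \B}(T,\Sigma B)$; both are natural in $T\in\T$, as are the loop and suspension functors themselves in the setting of enough projectives and injectives. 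Thus on the stable level the functor $\Hom_{\underline \B}(\Omega(-),B)|_{\T}$ is identified with $\Hom_{\overline \B}(-,\Sigma B)|_{\T}$, which converts an $\EE$-problem into a $\Hom$-problem where contravariant finiteness of $\T$ can be applied.

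Next I would invoke that $\T$ is contravariantly finite. Choose a right $\T$-approximation $t\colon T_0\to\Sigma B$ and let $f\colon\Omega T_0\to B$ be a morphism whose class corresponds to $\overline t$ under the isomorphism above; concretely, $f$ arises by lifting $t\beta\colon P\to\Sigma B$ through the deflation $I\to\Sigma B$, where $P\to T_0$ is the presentation of $\Omega T_0$, and completing to a morphism of $\EE$-triangles. Given any $g\colon\Omega T\to B$ with $T\in\T$, the element of $\Hom_{\overline \B}(T,\Sigma B)$ corresponding to $g$ is represented by some $\sigma\colon T\to\Sigma B$, and $\sigma=t h$ for some $h\colon T\to T_0$ since $t$ is a right $\T$-approximation. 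Passing to classes and using naturality in $T$, we get $\overline g=\overline f\circ\overline{\Omega h}$ in $\Hom_{\underline \B}(\Omega T,B)$, i.e. $g$ and $f\circ\Omega h$ agree modulo morphisms factoring through $\mathcal P$. Hence $f$ is a right $\Omega\T$-approximation of $B$ in the stable category $\underline\B$.

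It remains to upgrade this to a genuine approximation in $\B$. Take an $\EE$-triangle $\Omega B\to P_0\xrightarrow{p}B\dashrightarrow$ with $P_0\in\mathcal P$; since every map from a projective lifts along the deflation $p$, it is a right $\mathcal P$-approximation of $B$. I claim $(f,p)\colon\Omega T_0\oplus P_0\to B$ is the desired approximation. First, $\Omega T_0\oplus P_0\in\Omega\T$: the direct sum of the presentation of $\Omega T_0$ with $P_0\xrightarrow{=}P_0\to 0\dashrightarrow$ realizes it over $T_0\oplus 0\in\T$. Second, for any $g\colon\Omega T\to B$ the previous paragraph gives $g-f\circ\Omega h=\beta\alpha$ with $\alpha\colon\Omega T\to P'$, $\beta\colon P'\to B$ and $P'\in\mathcal P$; writing $\beta=p\gamma$ by the approximation property of $p$, we obtain $g=(f,p)\circ\svecv{\Omega h}{\gamma\alpha}$, so every morphism from $\Omega\T$ to $B$ factors through $(f,p)$. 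The main obstacle is exactly this bookkeeping: establishing the naturality of the two comparison isomorphisms and then patching the stable-category factorization back to an honest one in $\B$ by absorbing the projective part; the rest is formal, with Hom-finiteness and the Krull--Schmidt property entering only through the existence of the relevant (co)syzygies and approximations.
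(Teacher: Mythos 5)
Your overall strategy is exactly the paper's: approximate the cosyzygy $\Sigma B$ by $\T$, pull that approximation back through syzygy triangles to get $f\colon\Omega T_0\to B$, and absorb the projective ambiguity by adding a deflation $p\colon P_0\to B$; your construction of $f$, the final patching step, and the verification $\Omega T_0\oplus P_0\in\Omega\T$ are all correct. The gap is in the middle comparison. Applying $\Hom_{\B}(T,-)$ to $B\to I\xrightarrow{i}\Sigma B\dashrightarrow$ gives, since $\EE(T,I)=0$, an isomorphism $\EE(T,B)\cong\Hom_{\B}(T,\Sigma B)/\Im(i_{*})$, i.e.\ modulo maps factoring through the \emph{chosen} deflation $i$ --- not modulo all maps factoring through injectives. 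Your claimed identification $\EE(T,B)\cong\Hom_{\overline{\B}}(T,\Sigma B)$ would need every map $T\to\Sigma B$ factoring through an injective to factor through $i$, and this fails under the standing hypotheses of this section (they bound $\EE(\T,\mathcal P)$, but say nothing about $\EE(\mathcal I,B)$). Concretely, let $\Lambda=kQ/\mathrm{rad}^{2}$ with $Q\colon 1\to 2\to 3$ and $\T=\add(\Lambda\oplus S_1)$; one checks $\mathcal P\subset\T$ and $\EE(\T,\mathcal P)=0$. For $T=S_1$ and $B=S_2$ we have $\Omega T=S_2$, $\Sigma B=S_1$, and $\Hom_{\underline{\B}}(\Omega T,B)=\End(S_2)=k$, while $\Hom_{\overline{\B}}(T,\Sigma B)=0$ because $S_1$ is injective. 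Running your argument on $g=\id_{S_2}$: its class in $\Hom_{\overline{\B}}(S_1,S_1)$ is zero, so your recipe permits $\sigma=0$, hence $h=0$, and ``passing to classes'' would give $\underline{g}=0$, which is false. The step deducing $\underline{g}=\underline{f}\circ\underline{\Omega h}$ thus invokes injectivity of a comparison map that is in general only a surjection.

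The repair is small and lands you on the paper's own proof: work with $\EE(T,B)$ instead of $\Hom_{\overline{\B}}(T,\Sigma B)$. Since $\EE(T,I)=0$, the class $g_{*}\delta_T\in\EE(T,B)$ equals $\sigma^{*}\delta_B$ for \emph{some} $\sigma\colon T\to\Sigma B$; equivalently, complete $g$ to a morphism of $\EE$-triangles from $\Omega T\to P\to T$ to $B\to I\to\Sigma B$ (extend $\Omega T\xrightarrow{g}B\to I$ over the inflation $\Omega T\to P$ using injectivity of $I$, then apply (ET3)), which is precisely what the paper does. Only then factor $\sigma=th$ through the approximation; bifunctoriality of $\EE$ and $h^{*}\delta_{T_0}=(\Omega h)_{*}\delta_T$ give $g_{*}\delta_T=(f\circ\Omega h)_{*}\delta_T$, and your first isomorphism $\Hom_{\underline{\B}}(\Omega T,B)\cong\EE(T,B)$ then yields $\underline{g}=\underline{f\circ\Omega h}$. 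Note that this first isomorphism is where the standing assumption $\EE(\T,\mathcal P)=0$ enters (not merely $\EE(P,-)=0$): one needs a map $\Omega T\to B$ factoring through an \emph{arbitrary} projective to factor through $\Omega T\to P$. With these corrections your final paragraph goes through verbatim.
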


\begin{proof}
An object $B$ admits an $\EE$-triangle $\xymatrix{B\ar[r] &I \ar[r] &\Sigma B \ar@{-->}[r] &}$ where $I\in \mathcal I$. Since $\T$ is contravariantly finite, we have a right $\T$-approximation $T_0\xrightarrow{~f~} \Sigma B$ of $\Sigma B$. An object $T$ admits an $\EE$-triangle $$\xymatrix{\Omega T \ar[r] &P \ar[r] &T \ar@{-->}[r] &}$$ where $P\in \mathcal P$. Then we have the following commutative diagram
$$\xymatrix{
\Omega T \ar[d]_{g} \ar[r]^p &P \ar[d] \ar[r] &T \ar[d]^f \ar@{-->}[r] &\\
B\ar[r] &I \ar[r] &\Sigma B \ar@{-->}[r] &
}
$$
Let $\Omega T_0\in \Omega \T$ which admits an $\EE$-triangle $\xymatrix{\Omega T_0 \ar[r] &P_0 \ar[r] &T_0 \ar@{-->}[r] &}$ where $P_0\in \mathcal P$ and $T_0\in \mathcal \T$, for any morphism $g_0:\Omega T_0\to B$, we have the following commutative diagram
$$\xymatrix{
\Omega T_0 \ar[d]_{g_0} \ar[r] &P_0 \ar[d] \ar[r] &T_0 \ar[d]^{f_0} \ar@{-->}[r] &\\
B\ar[r] &I \ar[r] &\Sigma B \ar@{-->}[r] &
}
$$
Since $f$ is a right $\T$-approximation of $\Sigma B$, there is a morphism $t:T_0\to T$ such that $f_0=ft$. Hence we have the following commutative diagram
$$\xymatrix{
\Omega T_0 \ar[d]_{t'} \ar[r] &P_0 \ar[d] \ar[r] &T_0 \ar[d]^{t} \ar@{-->}[r] &\\
\Omega T \ar[r] &P \ar[r] &T \ar@{-->}[r] &
}
$$
This implies $g_0-gt'$ factors through $p$, which means $\Omega T\oplus P\xrightarrow{\svech{g}{p}} B$ is a right $\Omega \T$-approximation of $B$. Hence $\Omega \T$ is contravariantly finite in $\B$.
\end{proof}

\begin{prop}\label{abelian}
The category $\mod\underline {\Omega \T}$ is abelian.
\end{prop}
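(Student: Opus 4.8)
The natural framework here is the classical criterion of Auslander: for an additive category $\A$, the category $\mod\A$ of finitely presented functors is abelian if and only if $\A$ has weak kernels (pseudo-kernels), where a weak kernel of $u\colon X\to Y$ is a morphism $v\colon W\to X$ with $uv=0$ through which every $w$ satisfying $uw=0$ factors. Cokernels in $\mod\A$ always exist, so the entire statement reduces to checking that $\underline{\Omega\T}$ admits weak kernels. I note first that $\mathcal P\subseteq\Omega\T$: any $P\in\mathcal P$ equals $\Omega 0$ via the split $\EE$-triangle $P\to P\to 0$, so $\underline{\Omega\T}=\Omega\T/[\mathcal P]$ is a genuine additive category in which a morphism vanishes exactly when it factors through $\mathcal P$.

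The technical engine I would establish first is a natural isomorphism $\Hom_{\underline\B}(\Omega T',X)\cong\EE(T',X)$ for every $T'\in\T$ and $X\in\B$. Applying $\Hom_\B(-,X)$ to the defining $\EE$-triangle $\Omega T'\xrightarrow{i'}P'\to T'\dashrightarrow$ gives a connecting map $\partial\colon\Hom_\B(\Omega T',X)\to\EE(T',X)$ which is surjective (as $\EE(P',X)=0$) with kernel the maps factoring through $i'$. The hypothesis $\EE(\T,\mathcal P)=0$ is exactly what forces any map $\Omega T'\to X$ that factors through an arbitrary projective to already factor through $i'$ (its class in $\EE(T',\text{(projective)})$ vanishes), so the kernel of $\partial$ coincides with $[\mathcal P](\Omega T',X)$ and $\partial$ descends to the asserted isomorphism, natural in $X$. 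This is the quantitative form of Remark \ref{useful}(d).

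With this in hand I would construct the weak kernel of a morphism $\underline a\colon\Omega T_1\to\Omega T_0$ as follows. Lift it to $a$ in $\B$ and apply \cite[Proposition 1.20]{LN} to the $\EE$-triangle $\Omega T_1\xrightarrow{i_1}P_1\to T_1\dashrightarrow$ and the map $a$, producing an object $E$ and an $\EE$-triangle
\[
\Omega T_1\xrightarrow{\ \svecv{a}{i_1}\ }\Omega T_0\oplus P_1\longrightarrow E\dashrightarrow ,
\]
so that $\underline E$ is the ``cone'' of $\underline a$ in $\underline\B$. Since $\B$ has enough projectives, choose a syzygy triangle $\Omega E\to P_E\to E\dashrightarrow$; pulling the displayed triangle back along the deflation $P_E\to E$ and splitting it (using $\EE(P_E,\Omega T_1)=0$) yields a connecting morphism $\partial\colon\Omega E\to\Omega T_1$ with $\underline a\,\underline\partial=0$. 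By Lemma \ref{contra}, $\Omega\T$ is contravariantly finite, so $\Omega E$ has a right $\Omega\T$-approximation $\pi\colon\Omega T_2\to\Omega E$; I claim $\underline b:=\underline\partial\,\underline\pi\colon\Omega T_2\to\Omega T_1$ is a weak kernel of $\underline a$. Indeed $\underline a\,\underline b=0$ is immediate, and for weak universality one combines the key isomorphism with the long exact sequence obtained by applying $\EE(T',-)$ to the cone triangle: since $\EE(T',P_1)=0$, this identifies the sequence $\Hom_{\underline\B}(\Omega T',\Omega E)\xrightarrow{\partial_*}\Hom_{\underline\B}(\Omega T',\Omega T_1)\xrightarrow{a_*}\Hom_{\underline\B}(\Omega T',\Omega T_0)$ with an exact piece of that long exact sequence. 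Hence any $\underline c\colon\Omega T'\to\Omega T_1$ with $\underline a\,\underline c=0$ factors as $\underline\partial\,\underline{c'}$, and since $\pi$ is a right $\Omega\T$-approximation and $\Omega T'\in\Omega\T$, the map $\underline{c'}$ factors through $\underline\pi$, giving the desired factorization through $\underline b$.

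The main obstacle is the bookkeeping that makes the last paragraph rigorous, and it is genuinely where the hypotheses $\mathcal P\subset\T$ and $\EE(\T,\mathcal P)=0$ are indispensable: because $\underline\B=\B/\mathcal P$ carries no honest triangulated structure, the ``exact sequence'' relating $\Omega E$, $\Omega T_1$, $\Omega T_0$ has to be produced by hand from the cone $\EE$-triangle and the syzygy triangle of $E$, and one must verify that the connecting morphism $\partial$ and the identifications $a_*\leftrightarrow\EE(T',a)$ match up under the natural isomorphism of the second paragraph. Once that compatibility is checked—so that factorizations found in $\B$ descend correctly to $\underline\B$—the weak-kernel property follows, and with it the abelianness of $\mod\underline{\Omega\T}$.
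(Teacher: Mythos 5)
Your proof is correct, and at the top level it follows the same strategy as the paper: both reduce Proposition \ref{abelian}, via Auslander's criterion, to the existence of pseudo-kernels in $\underline{\Omega\T}$, and both manufacture the pseudo-kernel by composing a right $\Omega\T$-approximation (Lemma \ref{contra}) with a morphism extracted from an $\EE$-triangle attached to the given map. The execution, however, genuinely differs. The paper works one rotation earlier: it pulls $f\colon\Omega T_1\to\Omega T_0$ back along a projective presentation $P_0\to\Omega T_0$, obtaining the $\EE$-triangle $Y\xrightarrow{\svecv{g}{-q}}\Omega T_1\oplus P_0\xrightarrow{\svech{f}{p}}\Omega T_0\dashrightarrow$, takes a right $\Omega\T$-approximation $h\colon\Omega T_2\to Y$, and shows $\underline{gh}$ is a pseudo-kernel by an elementary lifting argument: a map killed by $\underline f$ factors through a projective, the projective lifts through the deflation $\svech{f}{p}$, and the corrected map factors through the inflation, hence through $h$. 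You instead form the cone $E$ of $a$, pass to its syzygy $\Omega E$ and a connecting morphism $\partial$, and verify weak universality through the natural isomorphism $\Hom_{\underline\B}(\Omega T',X)\cong\EE(T',X)$ together with the long exact sequence; since your $\Omega E$ agrees with the paper's $Y$ up to projective summands, the extra rotation is exactly what creates the compatibility bookkeeping you flag at the end. That bookkeeping does check out: with $\delta$ the class of the cone triangle and $\delta_{T'}$ that of the syzygy triangle of $T'$, one has $u^{\ast}\delta=(\partial\circ\Omega u)_{\ast}\delta_{T'}$ for any $u\colon T'\to E$, so every class in $\ker(\underline a_{\ast})$ equals $\underline\partial\,\underline{\Omega u}$ and then factors through the approximation $\underline\pi$; so there is no gap, only compression. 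Each route has a payoff worth noting. The paper's diagram chase never invokes $\EE(\T,\mathcal P)=0$ (nor $\mathcal P\subset\T$), so it in fact proves the proposition using only contravariant finiteness of $\T$ and enough projectives; your argument uses $\EE(\T,\mathcal P)=0$ irreducibly in the key isomorphism, but in exchange it isolates the quantitative form of Remark \ref{useful}(d) as a natural isomorphism, which is the mechanism implicitly behind Lemma \ref{exact} and the exactness manipulations in the proof of Theorem \ref{main1}, and it makes transparent why $H=\Hom_{\underline\B}(\Omega\T,-)$ behaves like an $\Ext$-functor on $\B$.
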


\begin{proof}
It is enough to show that $\underline {\Omega \T}$ has pseudo-kernels. For a morphism $\underline f: \Omega T_1\to \Omega T_0$, we have the following commutative diagram
$$\xymatrix{
X\ar[r] \ar@{=}[d] & Y \ar[r]^g \ar[d]^q &\Omega T_1 \ar[d]^f \ar@{-->}[r] &\\
X \ar[r] &P_0\ar[r]^p &\Omega T_0 \ar@{-->}[r] &
}
$$
where $P_0\in \mathcal P$. Then we get an $\EE$-triangle $\xymatrix{Y \ar[r]^-{\svecv{g}{-q}} &\Omega T_1\oplus P_0 \ar[r]^-{\svech{f}{p}} &\Omega T_0 \ar@{-->}[r] &}$. Let $h: \Omega T_2\to Y$ be a right $\Omega \T$-approximation of $Y$, we claim that $\underline {gh}$ is a pseudo-kernel of $\underline f$.

Let $\underline t: \Omega T\to \Omega T_1$ be a morphism such that $\underline {tf}=0$. Then we have a commutative diagram
$$\xymatrix{
&\Omega T \ar[r]^{p_1} \ar[d]^{\svecv{t}{0}} &P \ar[d]^{p_2} \\
Y \ar[r]_-{\svecv{g}{-q}} &\Omega T_1\oplus P_0 \ar[r]_-{\svech{f}{p}} &\Omega T_0 \ar@{-->}[r] &
}
$$
where $P\in \mathcal P$. Since $P$ is projective, $p_2$ factors through $\svech{f}{p}$, there is a morphism $p_3:P \to \Omega T_1$ such that $f(t-p_3p_1)=0$. This implies $t-p_3p_1$ factors through $g$, there is a morphism $t':\Omega T\to Y$ such that $gt'=t-p_3p_1$. Since $h$ is a right $\Omega \T$-approximation of $Y$, $t'$ factors through $h$. Hence $\underline t$ factors through $\underline {gh}$.
\end{proof}

\begin{lem}\label{summand}
The subcategory $\Omega \T$ is closed under direct summands.
\end{lem}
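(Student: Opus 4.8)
The plan is to reduce the statement to the following: if $\Omega T\cong X\oplus Y$ in $\B$ with $T\in\T$, then $X\in\Omega\T$. So I fix the defining $\EE$-triangle $X\oplus Y\xrightarrow{(a\ b)}P\xrightarrow{c}T\dashrightarrow$ with $P\in\mathcal P$, $T\in\T$, where $a\colon X\to P$ and $b\colon Y\to P$ are the two components of the inflation. The split inclusion $\iota_X\colon X\to X\oplus Y$ is a (split) inflation, and composites of inflations are inflations, so $a=(a\ b)\iota_X\colon X\to P$ is itself an inflation; I realize it by an $\EE$-triangle $X\xrightarrow{a}P\xrightarrow{p'}Z\dashrightarrow$ with projective middle term. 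Applying (ET4) to the split $\EE$-triangle $X\xrightarrow{\iota_X}X\oplus Y\xrightarrow{p_Y}Y\dashrightarrow$ and to the defining $\EE$-triangle then produces, besides the triangle $X\to P\to Z$, a second $\EE$-triangle $Y\xrightarrow{d}Z\xrightarrow{e}T\dashrightarrow$ with $ep'=c$. Thus $X$ already admits an $\EE$-triangle $X\to P\to Z$ with $P\in\mathcal P$, and the proof will be finished once we know that $Z$ can be taken inside $\T$: granting $Z\in\T$, the triangle $X\xrightarrow{a}P\xrightarrow{p'}Z$ exhibits $X\in\Omega\T$.

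The hard part is exactly controlling the cone $Z$, and this is where I expect the main obstacle to be. The tools I would bring in are contravariant finiteness of $\T$, the Krull–Schmidt property, closure of $\T$ under summands, and decisively the hypotheses $\mathcal P\subset\T$ and $\EE(\T,\mathcal P)=0$. Concretely, I would pick a right minimal right $\T$-approximation $t\colon T_1\to Z$. Since $P\in\mathcal P\subset\T$, the deflation $p'\colon P\to Z$ factors through $t$; as a composite which is a deflation forces its left-hand factor to be a deflation, $t$ is itself a deflation, say with $\EE$-triangle $K_1\to T_1\xrightarrow{t}Z\dashrightarrow$. Pulling the triangle $X\xrightarrow{a}P\xrightarrow{p'}Z$ back along $t$ yields an $\EE$-triangle $X\to W\to T_1\dashrightarrow$ with $T_1\in\T$, in which $W$ fits into an $\EE$-triangle $K_1\to W\to P\dashrightarrow$; since $P$ is projective this splits and $W\cong K_1\oplus P$. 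Hence $X\in\Omega\T$ as soon as $W$, equivalently $K_1$, is projective. So the whole argument is reduced to the single membership statement $Z\in\T$ (equivalently $K_1\in\mathcal P$).

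For that final step I would try to \emph{absorb} the complementary summand $Y$ against the vanishing $\EE(\T,\mathcal P)=0$. The idea is to resolve $Y$ by a projective, $\Omega Y\to Q\to Y$ with $Q\in\mathcal P$ (using that $\B$ has enough projectives), and to feed this resolution into the $\EE$-triangle $Y\to Z\to T$ via an (ET4)/pullback–pushout manipulation, the point being that $\EE(T,Q)=\EE(\T,\mathcal P)=0$ prevents any extension of $T$ by $Q$ and should let me split off the $Y$-contribution and realize $Z$ as a direct summand of an object of $\T$; closure of $\T$ under direct summands then delivers $Z\in\T$. I expect this interaction between the extension class $(p_Y)_\ast\delta\in\EE(T,Y)$ and the vanishing $\EE(\T,\mathcal P)=0$ to be the genuine technical crux, and the part of the proof that truly uses that $X$ is a summand of $\Omega T$ rather than merely an object admitting an inflation into a projective.
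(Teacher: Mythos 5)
Your opening reduction is sound and is in fact the same first move the paper makes: since $\EE(\T,\mathcal P)=0$ is not needed for this part, one realizes $x=\svech{x}{y}\iota_X\colon X\to P$ as an inflation with $\EE$-triangle $X\xrightarrow{x}P\xrightarrow{c}T_X\dashrightarrow$ (your $Z$ is the paper's $T_X$), and the lemma reduces to showing $T_X\in\T$. But that membership \emph{is} the entire content of the lemma, and you never prove it: your final paragraph is a plan (``I would try'', ``should let me split off the $Y$-contribution''), not an argument, and the mechanism you sketch does not work as stated. Vanishing of $\EE(T,Q)$ for a projective $Q$ deflating onto $Y$ gives no control over the class $\delta'\in\EE(T,Y)$ of the triangle $Y\to Z\to T$: from $\Omega Y\to Q\to Y$ one gets an exact sequence $\EE(T,\Omega Y)\to\EE(T,Q)\to\EE(T,Y)$, and a zero group in the \emph{middle} says nothing about $\EE(T,Y)$; the induced map $q_*\colon\EE(T,Q)\to\EE(T,Y)$ goes the wrong way for ``absorbing'' $Y$, so $\delta'$ need not be touched by it at all. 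Separately, your intermediate step takes a right minimal right $\T$-approximation $t\colon T_1\to Z$ and concludes $t$ is a deflation because $p'=ts$ is one; that is precisely condition (WIC), which the paper assumes only from Section 4 on, whereas Lemma~\ref{summand} is stated, proved and used in Section 3 without it.

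The paper closes the gap by a short argument you are missing, which uses $\EE(\T,\mathcal P)=0$ on the \emph{original} triangle rather than on a resolution of $Y$. Since $\EE(T,P)=0$, applying $\Hom_{\B}(-,P)$ to $X\oplus Y\xrightarrow{\svech{x}{y}}P\to T\dashrightarrow$ shows every morphism $X\oplus Y\to P$ extends along the inflation $\svech{x}{y}$; extending $\svech{x}{0}$ yields $f\colon P\to P$ with $fx=x$ and $fy=0$. This gives two morphisms of $\EE$-triangles, $\bigl(\svecv{1}{0},\,1_P,\,a\bigr)$ from $X\to P\xrightarrow{c}T_X$ to $X\oplus Y\to P\to T$ and $\bigl(\svech{1}{0},\,f,\,b\bigr)$ back again. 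Their composite $(1_X,\,f,\,ba)$ is an endomorphism of the triangle $X\to P\to T_X$ whose difference from the identity, $(0,\,1_P-f,\,1_{T_X}-ba)$, has zero first component; by Remark~\ref{useful}(a) its third component factors through the deflation $c$, i.e.\ $1_{T_X}=ba+cd$ for some $d\colon T_X\to P$. Hence $T_X$ is a direct summand of $T\oplus P\in\T$, so $T_X\in\T$ (as $\T$ is closed under summands) and $X\in\Omega\T$. This needs neither approximations, nor Krull--Schmidt, nor (WIC) --- only the standing hypotheses $\mathcal P\subset\T$, $\EE(\T,\mathcal P)=0$ and closure of $\T$ under summands.
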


\begin{proof}
Let $\xymatrix{X\oplus Y \ar[r]^-{\svech{x}{y}} &P \ar[r] &T \ar@{-->}[r] &}$ be an $\EE$-triangle where $T\in \T$ and $P\in \mathcal P$, then $x$ is an inflation and it admits an $\EE$-triangle $\xymatrix{X \ar[r]^x &P \ar[r]  &T_X \ar@{-->}[r]&}$. Since $\EE(T,P)=0$, then there exists a morphism $f:T\to T$ such that $fx=x$ and $fy=0$. Hence we have the following commutative diagram
$$\xymatrix{
X \ar[r]^-{x} \ar[d]_-{\svecv{1}{0}} &P \ar@{=}[d] \ar[r]^c &T_X \ar[d]^a \ar@{-->}[r] &\\
X\oplus Y \ar[r]^-{\svech{x}{y}} \ar[d]_-{\svech{1}{0}} &P \ar[d]^f \ar[r] &T \ar[d]^b \ar@{-->}[r] &\\
X \ar[r]^-{x} &P \ar[r]^c &T_X \ar@{-->}[r] &.
}
$$
Then there exists a morphism $d:T_X\rightarrow P$ such that $1_{T_X}-ba=cd$, hence $T_X$ is a direct summand of $T\oplus P$. Since $\T$ is closed under direct summands, we get $T_X\in \T$ and then $X\in \Omega \T$.
\end{proof}

\begin{lem}\label{min}
Let $f: R_1\to R_2$ be a morphism in $\Omega \T$, then $H(\underline f)$ is right minimal if and only if $\underline f$ is right minimal.
\end{lem}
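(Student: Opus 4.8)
The plan is to exploit the fact that, restricted to $\Omega\T$, the functor $H=\Hom_{\underline\B}(\Omega\T,-)$ is nothing but the restricted Yoneda embedding of $\underline{\Omega\T}$ into $\mod\underline{\Omega\T}$, and that such an embedding is fully faithful. Concretely, since $R_1\in\Omega\T$, for every $\Omega T\in\Omega\T$ we have $H(R_1)(\Omega T)=\Hom_{\underline\B}(\Omega T,R_1)=\Hom_{\underline{\Omega\T}}(\Omega T,R_1)$, because $\Omega\T$ is a full subcategory and the quotient by $\mathcal P$ is the same computed in $\underline\B$ or in $\underline{\Omega\T}$. Thus $H(R_1)$ is the representable object $\Hom_{\underline{\Omega\T}}(-,R_1)$, and likewise for $H(R_2)$. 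First I would record, via the Yoneda lemma, that $H$ induces an isomorphism $\Hom_{\underline\B}(R_1,R_1)\xrightarrow{\sim}\End_{\mod\underline{\Omega\T}}(H(R_1))$ sending $\underline g$ to $H(\underline g)$, whose inverse sends a natural transformation $\phi$ to $\phi_{R_1}(\id_{R_1})$. Since $H$ is a functor this map is multiplicative, hence a ring isomorphism, and in particular $H$ both preserves and reflects isomorphisms.

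With this in hand the two directions become a formal translation. For the forward direction, assume $H(\underline f)$ is right minimal and let $\underline g\in\End_{\underline\B}(R_1)$ satisfy $\underline f\,\underline g=\underline f$. Applying $H$ gives $H(\underline f)H(\underline g)=H(\underline f)$, so right minimality of $H(\underline f)$ forces $H(\underline g)$ to be an isomorphism, whence $\underline g$ is an isomorphism because $H$ is fully faithful; thus $\underline f$ is right minimal. For the converse, assume $\underline f$ is right minimal and take any $\phi\in\End_{\mod\underline{\Omega\T}}(H(R_1))$ with $H(\underline f)\phi=H(\underline f)$. By the Yoneda identification $\phi=H(\underline g)$ for a unique $\underline g\in\End_{\underline\B}(R_1)$, so $H(\underline f\,\underline g)=H(\underline f)$, and faithfulness of $H$ gives $\underline f\,\underline g=\underline f$; right minimality of $\underline f$ then yields that $\underline g$ is an isomorphism, hence so is $\phi=H(\underline g)$.

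The step that needs genuine care is the fullness/representability input: right minimality of $H(\underline f)$ quantifies over \emph{all} module endomorphisms of $H(R_1)$, not merely those lying in the image of $H$, so the converse direction collapses unless one knows that every such $\phi$ arises as $H(\underline g)$. This is exactly where the hypothesis $R_1\in\Omega\T$ is essential, since it is what makes $H(R_1)$ representable on $\underline{\Omega\T}$ and lets the Yoneda lemma apply. I would therefore be explicit that the functors in question live over the category $\underline{\Omega\T}$, so that $\Hom_{\underline\B}$ and $\Hom_{\underline{\Omega\T}}$ agree on all the objects involved, before invoking Yoneda; once that identification is in place, the remainder is routine bookkeeping.
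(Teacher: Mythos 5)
Your proof is correct and takes essentially the same route as the paper's: both apply Yoneda's lemma to write any endomorphism of $H(R_1)$ as $H(\underline r)$ for some $\underline r\colon R_1\to R_1$, then transfer the right-minimality condition in each direction using fullness, faithfulness, and reflection of isomorphisms of the restricted Yoneda functor on $\underline{\Omega\T}$. The paper's proof is simply a terser version of your argument.
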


\begin{proof}
Let $\alpha: H(R_1)\to H(R_1)$ be a morphism in $\mod\underline {\Omega \T}$, then by Yoneda's Lemma, there is a morphism $r:R_1\to R_1$ such that $H(\underline r)=\alpha$.
\begin{itemize}
\item When $ H(\underline f)\alpha =H(\underline f)$, we have $\underline {fr}=\underline f$, hence if $\underline f$ is right minimal, we get an isomorphism $\underline r$ , then $\alpha$ is also an isomorphism. This means $H(\underline f)$ is right minimal.
\item For any morphism $r:R_1\to R_1$ such that $\underline {fr}=\underline f$, we get $H(\underline f)H(\underline r)=H(\underline f)$, when $H(\underline f)$ is right minimal, $H(\underline r)$ is an isomorphism, which means $\underline r$ is an isomorphism and $\underline f$ is right minimal.

\end{itemize}
\end{proof}

Let $\widehat{\T}$ be the subcategory of non-projective objects in $\T$.

\begin{lem}\label{tec}
Let $f:R_1\to R_2$ be a morphism in $\Omega \T$, $R_i$ admits an $\EE$-triangle $\xymatrix{R_i \ar[r]^{p_i} &P_i \ar[r]^{q_i} &T_i \ar@{-->}[r] &}$ where $P_i\in \mathcal P$, $T_i\in \widehat {\T}$, then we have the following commutative diagram
$$\xymatrix{
R_1 \ar@{=}[r] \ar[d]_{f'} &R_1\ar[d]\\
R_2' \ar[d]_g \ar[r] &P \ar[r] \ar[d] &T_2 \ar@{=}[d] \ar@{-->}[r]&\\
B \ar[r]_h \ar@{-->}[d] &T_1' \ar[r] \ar@{-->}[d] &T_2 \ar@{-->}[r] &\\
&& &&
}
$$
where $P\in \mathcal P$, $\underline {f'}=\underline f$, $R'_2=R_2$ and  $T_1'\simeq T_1$ in $\underline \B$.
\end{lem}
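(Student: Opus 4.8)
The plan is to assemble the required $3\times3$ diagram out of the two defining $\EE$-triangles of $R_1$ and $R_2$ together with the octahedral axiom (ET4), after correcting $f$ to an inflation. Write the given $\EE$-triangles as $R_i\xrightarrow{p_i}P_i\xrightarrow{q_i}T_i\overset{\delta_i}{\dashrightarrow}$ with $P_i\in\mathcal P$ and $T_i\in\widehat{\T}$. The one genuinely special input I would record first is a partial lift of $f$: the composite $p_2f\colon R_1\to P_2$ factors through $p_1$ precisely when its image $(p_2f)_\ast\delta_1\in\EE(T_1,P_2)$ vanishes, and since $T_1\in\widehat{\T}\subseteq\T$ and $P_2\in\mathcal P$ the hypothesis $\EE(\T,\mathcal P)=0$ makes this obstruction zero. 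Thus there is $\bar f\colon P_1\to P_2$ with $\bar fp_1=p_2f$, and this factorization is exactly what will pin down $T_1'$.

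Next I would make $f$ an inflation. Because $p_1$ is an inflation, \cite[Proposition 1.20]{LN} applied to $\delta_1$ and the morphism $f$ shows that $f':=\svecv{f}{p_1}\colon R_1\to R_2\oplus P_1$ is an inflation; let $R_1\xrightarrow{f'}R_2\oplus P_1\xrightarrow{g}B\dashrightarrow$ realize its class. Since $P_1\in\mathcal P$ we have $R_2\oplus P_1\simeq R_2$ in $\underline{\B}$ with $\underline{f'}=\underline f$ under the projection, so I set $R_2':=R_2\oplus P_1$ (this is the sense in which $R_2'=R_2$). For the middle row I would take the direct sum of $R_2\xrightarrow{p_2}P_2\xrightarrow{q_2}T_2$ with the split triangle $P_1\xrightarrow{\id}P_1\to 0$, namely $R_2'\xrightarrow{p_2\oplus\id}P\xrightarrow{q_2\oplus 0}T_2\dashrightarrow$ with $P:=P_2\oplus P_1\in\mathcal P$; note that its cokernel is \emph{exactly} $T_2$.

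I would then build the grid by applying (ET4) to the two composable inflations $R_1\xrightarrow{f'}R_2'\xrightarrow{p_2\oplus\id}P$. The octahedron yields an object $T_1'$ together with a middle-column $\EE$-triangle $R_1\xrightarrow{(p_2\oplus\id)f'}P\to T_1'\dashrightarrow$ and a bottom-row $\EE$-triangle $B\xrightarrow{h}T_1'\to T_2\dashrightarrow$, and the accompanying commutativities are precisely the asserted diagram (left column $R_1\xrightarrow{f'}R_2'\xrightarrow{g}B$, right column $T_2=T_2$). It remains to identify $T_1'$, which is the cokernel of $(p_2\oplus\id)f'=\svecv{p_2f}{p_1}\colon R_1\to P_2\oplus P_1$. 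Using $p_2f=\bar fp_1$, the automorphism $\left(\begin{smallmatrix}1&\bar f\\0&1\end{smallmatrix}\right)$ of $P=P_2\oplus P_1$ carries $\svecv{0}{p_1}$ to $\svecv{p_2f}{p_1}$, so the two inflations have isomorphic cokernels. But $\svecv{0}{p_1}$ is the inflation of the direct-sum $\EE$-triangle $(0\to P_2\xrightarrow{\id}P_2)\oplus(R_1\xrightarrow{p_1}P_1\xrightarrow{q_1}T_1)$, whose cokernel is $P_2\oplus T_1$. Hence $T_1'\cong P_2\oplus T_1\in\T$, and in particular $T_1'\simeq T_1$ in $\underline{\B}$ since $P_2\in\mathcal P$, completing the diagram with all the stated properties.

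I expect the decisive step to be the vanishing $\EE(T_1,P_2)=0$ producing $\bar f$: everything downstream—the inflation $f'$ via \cite[Proposition 1.20]{LN}, the octahedron, and the identification $T_1'\cong P_2\oplus T_1$ by an elementary unipotent column operation—is formal once $\bar f$ is in hand. The only other point needing care is the bookkeeping of the projective summand $P_1$: it must be absorbed so that the middle-row cokernel is literally $T_2$, while $R_2'$ and $T_1'$ coincide with $R_2$ and $T_1$ only up to projective summands, i.e. in $\underline{\B}$.
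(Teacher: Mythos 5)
Your proof is correct. Its construction phase coincides with the paper's: the paper likewise forms $f'=\svecv{f}{p_1}$, uses \cite[Proposition 1.20]{LN} (the Proposition recalled in Section 2) to see that it is an inflation with some cone $B$, and assembles the same grid whose middle row is the direct sum of $R_2\xrightarrow{p_2}P_2\xrightarrow{q_2}T_2$ with the split triangle on $P_1$. The genuine divergence is in identifying $T_1'$. The paper uses $\EE(\T,\mathcal P)=0$ to build two morphisms of $\EE$-triangles between $R_1\to P_2\oplus P_1\to T_1'$ and $R_1\xrightarrow{p_1}P_1\xrightarrow{q_1}T_1$ (its comparison map $P_1\to P_2\oplus P_1$ is exactly $\svecv{\bar f}{1}$ in your notation); from the two composites it deduces that $T_1'$ is a direct summand of $T_1\oplus P_2\oplus P_1$ and, symmetrically, that $T_1$ is a stable direct summand of $T_1'$, and it then needs the Krull--Schmidt property together with the hypothesis $T_1\in\widehat{\T}$ (to exclude the degenerate case $T_1'\in\mathcal P$) in order to conclude $T_1'\simeq T_1$ in $\underline{\B}$. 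You spend the same vanishing $\EE(T_1,P_2)=0$ only once, on producing the lift $\bar f$, and then the unipotent automorphism $\left(\begin{smallmatrix}1&\bar f\\0&1\end{smallmatrix}\right)$ of $P_2\oplus P_1$ carries $\svecv{0}{p_1}$ to $\svecv{p_2f}{p_1}$; since the cone of a given inflation is unique up to isomorphism (by (ET3) and the standard fact from \cite{NP} that a morphism of $\EE$-triangles whose first two components are isomorphisms has an isomorphism as third component), and the cone of $\svecv{0}{p_1}$ is $P_2\oplus T_1$ by additivity of $\mathfrak{s}$, you obtain $T_1'\cong P_2\oplus T_1$ in $\B$ itself. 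Your ending is thus both more elementary (no Krull--Schmidt decomposition, no case analysis) and strictly stronger: it exhibits $T_1'$ as an object of $\T$ on the nose and makes the stable isomorphism $T_1'\simeq T_1$ immediate, whereas the paper only obtains the isomorphism in $\underline{\B}$.
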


\begin{proof}
First we have the following commutative diagram
$$\xymatrix{
R_1 \ar[r]^{p_1} \ar[d]_f &P_1 \ar[d] \ar[r] &T_1 \ar@{=}[d] \ar@{-->}[r] &\\
R_2 \ar[r] &B \ar[r] &T_1 \ar@{-->}[r] &
}
$$
which induces an $\EE$-triangle $\xymatrix{R_1 \ar[r]^-{\svecv{f}{p_1}} &R_2\oplus P_1 \ar[r] &B \ar@{-->}[r] &}$. Then we have the following commutative diagram
$$\xymatrix{
R_1 \ar@{=}[r] \ar[d]_-{\svecv{f}{p_1}}  &R_1\ar[d]^-{\svecv{p_2f}{p_1}} \\
R_2\oplus P_1 \ar[d] \ar[r]^-{\left(\begin{smallmatrix}
p_2&0\\
0&1
\end{smallmatrix}\right)} &P_2\oplus P_1 \ar[r]_-{\svech{q_2}{0}} \ar[d] &T_2 \ar@{=}[d] \ar@{-->}[r]&\\
B \ar[r]_h \ar@{-->}[d] &T_1' \ar[r] \ar@{-->}[d] &T_2 \ar@{-->}[r] &\\
&& &&
}
$$
We write $R_1 \xrightarrow{\svecv{f}{p_1}} R_2\oplus P_1$ as $R_1 \xrightarrow{f'} R_2'$, then $\underline {f'}=\underline f$ and $R_2'=R_2$ in $\underline B$. Since $\EE(\T,\mathcal P)=0$, we also have the following commutative diagram
$$\xymatrix{
R_1 \ar[r]^-{\svecv{p_2f}{p_1}} \ar@{=}[d] &P_2 \oplus P_1 \ar[r] \ar[d]^-{\svech{0}{1}} &T_1' \ar[d]^t \ar@{-->}[r] &\\
R_1 \ar[r]^{p_1} \ar@{=}[d] &P_1 \ar[d]^p \ar[r]^{q_1} &T_1 \ar[d]^{t'} \ar@{-->}[r] &\\
R_1 \ar[r]^-{\svecv{p_2f}{p_1}} &P_2 \oplus P_1 \ar[r]  &T_1' \ar@{-->}[r] &
}
$$
Then $T_1'$ is a direct summand of $T_1\oplus P_2\oplus P_1$. If $T_1'\in \mathcal P$, we have $R_1\in \mathcal P$ and $T_1\in \mathcal P$, but $T_1\in \widehat{\T}$, a contradiction. Hence $T_1'$ is a direct summand of $T_1$ in $\underline \B$. On the other hand, by the same method we get that $T_1$ is a direct summand of $T_1'$. Hence $T_1'\simeq T_1$ in $\underline \B$.
\end{proof}

\begin{lem}\label{min2}
If we have the following commutative diagram
$$\xymatrix{
\Omega B \ar[r] \ar[d]_{f'} &P_B \ar[d] \ar[r] &B \ar[d]^f \ar@{-->}[r] &\\
\Omega A \ar[r]  &P_A  \ar[r] &A \ar@{-->}[r] &
}
$$
where $A,B\in \T$ and $P_A,P_B\in \mathcal P$, then $\underline {f'}$ is right minimal implies $\underline f$ is also right minimal.
\end{lem}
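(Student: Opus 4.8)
The plan is to read the left-hand vertical map $f'$ as the syzygy of $f$ and then argue by contraposition. Recall that taking syzygies defines an additive functor $\Omega\colon\underline{\B}\to\underline{\B}$: every object $B$ sits in an $\EE$-triangle $\Omega B\to P_B\to B\dashrightarrow$ with $P_B\in\mathcal P$, any $\underline r\colon B\to B'$ lifts to a morphism of such triangles, and the induced endomorphism of the syzygy is well defined modulo $\mathcal P$. In the diagram of the statement both middle terms are projective, so $f'$ realizes precisely the induced map on syzygies; that is, $\underline{f'}=\Omega(\underline f)$ in $\underline{\B}$. Since $\Omega$ is additive it respects direct sum decompositions and zero morphisms, so it suffices to prove the contrapositive: if $\underline f$ is not right minimal, then neither is $\underline{f'}$.

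The key input is a non-vanishing fact for syzygies: \emph{if $X\in\T$ and $X\notin\mathcal P$, then $\Omega X\notin\mathcal P$.} Indeed, suppose $\Omega X\in\mathcal P$. The defining $\EE$-triangle $\Omega X\to P_X\to X\overset{\delta}{\dashrightarrow}$ has $\delta\in\EE(X,\Omega X)$, and $\EE(X,\Omega X)=0$ because $X\in\T$, $\Omega X\in\mathcal P$ and $\EE(\T,\mathcal P)=0$. Hence $\delta=0$, the triangle splits, $P_X\cong\Omega X\oplus X$, and $X$ is a direct summand of the projective $P_X$; as $\mathcal P$ is closed under summands this forces $X\in\mathcal P$, a contradiction. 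This is exactly where the hypotheses $\mathcal P\subset\T$ and $\EE(\T,\mathcal P)=0$ enter.

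Now assume $\underline f$ is not right minimal. Working in the Krull--Schmidt category $\underline{\B}$, right minimality is equivalent to the absence of a nonzero direct summand of the source that is annihilated by the map; so there is a decomposition $\underline B\cong\underline{B'}\oplus\underline{B''}$ in $\underline{\B}$ with $\underline{B''}\neq 0$ and $\underline f$ vanishing on $\underline{B''}$. Splitting off the maximal projective summand, write $B\cong\widehat B\oplus Q$ with $Q\in\mathcal P$ and $\widehat B\in\widehat{\T}$; then $\underline B=\underline{\widehat B}$, and the corresponding idempotent lifts to $\End_{\B}(\widehat B)$, giving a genuine decomposition $\widehat B\cong B'\oplus B''$ in $\B$ with $B''\notin\mathcal P$ and $\underline{f|_{B''}}=0$. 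Since $\T$ is closed under summands, $B''\in\T\setminus\mathcal P$, so by the key fact $\Omega B''\notin\mathcal P$, i.e. $\underline{\Omega B''}\neq 0$. Applying the additive functor $\Omega$ to $\underline f=\underline{f_0}\oplus 0$ relative to $B'\oplus B''$ yields $\underline{f'}=\Omega(\underline f)=\Omega(\underline{f_0})\oplus 0$ relative to $\Omega B'\oplus\Omega B''$, the summand $Q$ contributing nothing since $\Omega Q\in\mathcal P$ is zero in $\underline{\B}$. Thus $\underline{f'}$ annihilates the nonzero summand $\underline{\Omega B''}$ of its source and is not right minimal, completing the contrapositive.

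The main obstacle is the passage from a decomposition of $\underline B$ in the quotient $\underline{\B}$ to a genuine decomposition of $\widehat B$ in $\B$, i.e. the idempotent-lifting step. The point to verify is that the ideal $I$ of morphisms factoring through $\mathcal P$ lies in the Jacobson radical of $\End_{\B}(\widehat B)$: if some diagonal component, onto an indecomposable summand $\widehat B_i$ of $\widehat B$, of a map factoring through $\mathcal P$ were an isomorphism, then $\widehat B_i$ would be a direct summand of a projective and hence projective, contradicting $\widehat B\in\widehat{\T}$. With $I\subseteq\operatorname{rad}\End_{\B}(\widehat B)$ and $\End_{\B}(\widehat B)$ semiperfect, idempotents lift modulo $I$, which legitimizes the decomposition used above; everything else is routine bookkeeping with the additivity of $\Omega$ and the non-vanishing fact.
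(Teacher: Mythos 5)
Your proof is correct, but it takes a genuinely different route from the paper's. The paper argues directly from the definition of right minimality: given $\underline b$ with $\underline{fb}=\underline f$, it lifts $b$ to $b'\colon\Omega B\to\Omega B$ through the two presentations, observes (via Remark \ref{useful}(a)) that $\underline{f'b'}=\underline{f'}$, concludes from minimality of $\underline{f'}$ that $\underline{b'}$ is an isomorphism, and then pushes the inverse $\underline{c'}$ forward to a morphism of $\EE$-triangles --- this forward extension is where $\EE(\T,\mathcal P)=0$ and $B\in\T$ enter, since one needs every map $\Omega B\to P_B$ to factor through the inflation $\Omega B\to P_B$ (surjectivity of $\Hom(P_B,P_B)\to\Hom(\Omega B,P_B)$, as $\EE(B,P_B)=0$; cf.\ Remark \ref{remark}) --- finally deducing that $\underline b$ is invertible. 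You instead argue by contraposition using the Krull--Schmidt characterization of right minimality (no nonzero ``dead'' summand of the source), lift the dead summand from $\underline{\B}$ to $\B$ by idempotent lifting, and kill it off with the non-vanishing fact that $X\in\T\setminus\mathcal P$ forces $\Omega X\notin\mathcal P$; here $\EE(\T,\mathcal P)=0$ enters in a different place, namely to prevent the presenting $\EE$-triangle of $X$ from splitting. Your route has the merit of isolating the structural reason the lemma holds ($\Omega$ cannot annihilate nonzero summands of objects of $\underline{\T}$), but it is heavier on machinery: it needs $\underline{\B}$ Krull--Schmidt, semiperfectness and lifting of idempotents modulo an ideal contained in the radical (your radical argument is right, and the lift can be arranged on the nose, not merely up to conjugacy, by lifting the conjugating unit), plus the well-definedness and additivity of the syzygy functor $\Omega$ on $\underline{\B}$, which you assert rather than prove --- it is standard, but justifying it requires exactly the (ET3)$^{\mathrm{op}}$ and Remark \ref{useful}(a) liftings that the paper's proof uses directly, so nothing is really saved there. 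The paper's argument, by contrast, needs no decomposition theory at all and would survive with weaker hypotheses on the ambient category.
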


\begin{proof}
Let $\underline b:B\to B$ be a morphism such that $\underline {fb}=\underline f$, we have the following commutative diagram
$$\xymatrix{
\Omega B \ar[r] \ar[d]_{b'} &P_B \ar[d] \ar[r] &B \ar[d]^b \ar@{-->}[r] &\\
\Omega B \ar[r] \ar[d]_{f'} &P_B \ar[d] \ar[r] &B \ar[d]^f \ar@{-->}[r] &\\
\Omega A \ar[r]  &P_A  \ar[r] &A \ar@{-->}[r] &
}
$$
Then we have
$$\xymatrix{
\Omega B \ar[r]^{p'} \ar[d]_{f'b'-f'} &P_B \ar[d] \ar[r] &B \ar[d]^{fb-f} \ar@{-->}[r] &\\
\Omega A \ar[r]  &P_A  \ar[r]^p &A \ar@{-->}[r] &
}
$$
Since $\underline {fb}=\underline f$, $fb-f$ factors through $\mathcal P$, then it factors through $p$, hence $f'b'-f'$ factors through $p'$. Then $\underline {f'b'}=\underline {f'}$. Since $\underline {f'}$ is right minimal, $\underline {b'}$ is an isomorphism. Let $\underline {c'}$ be the inverse of $b'$, then we have the following commutative diagram
$$\xymatrix{
\Omega B \ar[r] \ar[d]_{c'} &P_B \ar[d] \ar[r] &B \ar[d]^c \ar@{-->}[r] &\\
\Omega B \ar[r]  &P_B  \ar[r] &B \ar@{-->}[r] &
}
$$
We can easily check that $\underline c$ is the inverse of $\underline b$, the proof is left to the readers. This means $\underline b$ is an isomorphism, then $\underline f$ is right minimal.
\end{proof}


\begin{lem}\label{exact}
If we have the following commutative diagram
$$\xymatrix{
R_1 \ar@{=}[r] \ar[d]_f &R_1 \ar[d]\\
R_2 \ar[d]_g \ar[r]^q &P \ar[r] \ar[d]^p &T_2 \ar@{=}[d] \ar@{-->}[r]&\\
A \ar[r]_h \ar@{-->}[d] &T_1 \ar[r] \ar@{-->}[d] &T_2 \ar@{-->}[r] &\\
&& &&
}
$$
where $T_1,T_2\in \T$, $P\in \mathcal P$, then by applying $H$, we can get the following exact sequence in $\mod\underline {\Omega \T}$:
$$H(R_1) \xrightarrow{H(\underline f)} H(R_2) \xrightarrow{H(\underline g)}  H(A) \xrightarrow{H(\underline h)} H(T_1)$$
\end{lem}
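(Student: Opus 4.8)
The plan is to verify exactness objectwise. Since $\mod\underline{\Omega\T}$ is abelian (Proposition \ref{abelian}) and exactness of a sequence of $\underline{\Omega\T}$-modules is detected by evaluating on the objects of $\Omega\T$, it suffices to fix an arbitrary $\Omega T\in\Omega\T$, with defining $\EE$-triangle $\Omega T\xrightarrow{a}P_T\xrightarrow{b}T\overset{\omega}{\dashrightarrow}$ where $P_T\in\mathcal P$ and $T\in\T$, and to prove that
$$\Hom_{\underline\B}(\Omega T,R_1)\xrightarrow{\underline f_*}\Hom_{\underline\B}(\Omega T,R_2)\xrightarrow{\underline g_*}\Hom_{\underline\B}(\Omega T,A)\xrightarrow{\underline h_*}\Hom_{\underline\B}(\Omega T,T_1)$$
is exact at its two inner terms. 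The first device I would set up is a natural isomorphism $\Hom_{\underline\B}(\Omega T,W)\cong\EE(T,W)$. Applying the contravariant sequence of \cite{NP} to the defining triangle gives $\Hom_\B(P_T,W)\xrightarrow{a^*}\Hom_\B(\Omega T,W)\to\EE(T,W)\to\EE(P_T,W)=0$, the last term vanishing because $P_T$ is projective, so the connecting map is surjective with kernel $\Im(a^*)$. The standing hypothesis $\EE(\T,\mathcal P)=0$ ensures that a morphism $\Omega T\to W$ factors through $\mathcal P$ precisely when it lies in $\Im(a^*)$: if it factors as $\beta\gamma$ with $\gamma\colon\Omega T\to Q$, $Q\in\mathcal P$, then $\gamma_*\omega\in\EE(T,Q)=0$, so $\gamma$ factors through $a$. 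Hence the connecting map descends to the asserted isomorphism $\Hom_{\underline\B}(\Omega T,W)\cong\EE(T,W)$, natural in $W$ (this is the quantitative form of Remark \ref{useful}(d)). Under it $\underline f_*,\underline g_*,\underline h_*$ become $f_*,g_*,h_*$, so it remains to prove exactness of $\EE(T,R_1)\xrightarrow{f_*}\EE(T,R_2)\xrightarrow{g_*}\EE(T,A)\xrightarrow{h_*}\EE(T,T_1)$.

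Exactness at $\EE(T,R_2)$ is immediate: feeding the left-hand column $R_1\xrightarrow{f}R_2\xrightarrow{g}A\dashrightarrow$ into the covariant long exact sequence attached to an $\EE$-triangle (\cite{NP}) yields $\EE(T,R_1)\xrightarrow{f_*}\EE(T,R_2)\xrightarrow{g_*}\EE(T,A)$ with $\Im(f_*)=\Ker(g_*)$.

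The substantive point is exactness at $\EE(T,A)$, that is $\Im(g_*)=\Ker(h_*)$. Writing the bottom row as $A\xrightarrow{h}T_1\xrightarrow{r}T_2\overset{\theta}{\dashrightarrow}$ and the middle row as $R_2\xrightarrow{q}P\xrightarrow{p'}T_2\overset{\theta'}{\dashrightarrow}$, the bottom row's long exact sequence gives $\Ker(h_*)=\Im(\theta_\sharp)$, where $\theta_\sharp\colon\Hom_\B(T,T_2)\to\EE(T,A)$, $\beta\mapsto\beta^*\theta$. The middle row's long exact sequence gives $\EE(T,R_2)\xrightarrow{q_*}\EE(T,P)$; but $\EE(T,P)=0$ because $T\in\T$, $P\in\mathcal P$ and $\EE(\T,\mathcal P)=0$, so the connecting map $\theta'_\sharp\colon\Hom_\B(T,T_2)\to\EE(T,R_2)$ is surjective. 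Finally, since the right-hand column is the identity on $T_2$ and all squares commute, $(g,p,\id_{T_2})$ is a morphism of $\EE$-triangles from the middle row to the bottom row, i.e. $g_*\theta'=\theta$; naturality of the connecting map (bifunctoriality of $\EE$) then gives $g_*\circ\theta'_\sharp=\theta_\sharp$. Combining these,
$$\Im(g_*)=g_*\big(\Im(\theta'_\sharp)\big)=\Im(g_*\circ\theta'_\sharp)=\Im(\theta_\sharp)=\Ker(h_*),$$
which is the desired exactness.

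I expect the main obstacle to be the compatibility invoked in the last step, namely that $(g,p,\id_{T_2})$ is a genuine morphism of $\EE$-triangles ($g_*\theta'=\theta$), and not merely a diagram of commuting squares; this is exactly what pins $\Im(g_*)$ to $\Ker(h_*)$ and is the heart of the statement. It is supplied by the way the whole $3\times 3$ diagram is produced (via (ET4) as in Lemma \ref{tec}), which realizes the rows and columns as mutually compatible $\EE$-triangles. Equivalently, I could run the argument directly in $\underline\B$: lifting a given $\underline\phi\in\Ker(\underline h_*)$ successively through the deflations $p'$ and $q$ (using projectivity together with the relations $rp=p'$, $pq=hg$, $rh=0$ and $p'q=0$) reduces it, modulo a map through $\mathcal P$, to a morphism $\xi\colon\Omega T\to A$ with $h\xi=0$; then, writing $\delta$ and $\varepsilon$ for the defining classes of the left and middle columns and using $\delta=h^*\varepsilon$, one gets $\xi^*\delta=(h\xi)^*\varepsilon=0$, so $\xi$ factors through $g$ and $\underline\phi\in\Im(\underline g_*)$. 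Either route rests on the same $3\times 3$ compatibility, so I would isolate and state that compatibility at the outset.
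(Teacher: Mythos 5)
Your proposal is correct, and your primary argument takes a genuinely different route from the paper's. The paper never leaves $\underline{\B}$: it invokes the mapping-cone $\EE$-triangle $R_2 \xrightarrow{\svecv{g}{-q}} A\oplus P \xrightarrow{\svech{h}{p}} T_1$ attached to the middle square and then checks exactness objectwise by the same projectivity/lifting mechanism as in Proposition \ref{abelian}: a morphism $r\colon R\to R_2$ (resp.\ $r'\colon R\to A$) with $\underline{gr}=0$ (resp.\ $\underline{hr'}=0$) is first corrected by a map factoring through $\mathcal P$, then lifted through the relevant conflation using exactness of $\Hom_{\B}(R,-)$ at the middle term; your fallback argument in the final paragraph is essentially this proof. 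Your main argument instead upgrades Remark \ref{useful}(d) to a natural isomorphism $\Hom_{\underline \B}(\Omega T,-)\cong\EE(T,-)$ (this is where $\EE(\T,\mathcal P)=0$ enters, exactly as you say) and transports the statement to exactness of $\EE(T,R_1)\to\EE(T,R_2)\to\EE(T,A)\to\EE(T,T_1)$, which you then read off from the long exact sequences of the rows and columns plus the class equality $g_{*}\theta'=\theta$. What your route buys is conceptual transparency: the quantitative form of Remark \ref{useful}(d) is a reusable statement, and your bookkeeping isolates exactly which compatibility of the $3\times 3$ diagram the lemma consumes; what the paper's route buys is that its input is packaged as the existence of a single conflation rather than an equality of extension classes, and its lifting argument recycles Proposition \ref{abelian} verbatim. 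You are also right about where the only delicate point sits, and your handling of it is sound: the literal hypothesis ``commutative diagram whose rows and columns are $\EE$-triangles'' gives neither input, since commutativity of the squares alone does not force $(g,p,1_{T_2})$ to be a morphism of $\EE$-extensions (just as, for triangles, two commuting squares need not be compatible with the connecting morphisms). The paper's own proof has the same unstated reliance --- it asserts the mapping-cone triangle without derivation --- and in every application the diagram is produced by (ET4)$^{\op}$ or by \cite[Proposition 1.20]{LN} as recalled in Section 2, which do supply the compatibility; so your proposal to state that compatibility as part of the hypothesis is the correct reading of the lemma and repairs both proofs at once.
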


\begin{proof}
It is obvious that $H(\underline g)H(\underline f)=0$. Let $R\in \Omega \T$ and $r:R\to R_2$ be a morphism $\underline {gr}=0$, then as in the proof of Proposition \ref{abelian}, $\underline r$ factors through $\underline f$. Since we have an $\EE$-triangle $$\xymatrix{R_2 \ar[r]^-{\svecv{g}{-q}} &A\oplus P \ar[r]^-{\svech{h}{p}} &T_1 \ar@{-->}[r] &},$$ hence $H(\underline h)H(\underline g)=0$ and by the similar argument as in the proof of Proposition \ref{abelian}, any morphism $r':R\to A$ such that $\underline {hr'}=0$ factors through $\underline g$. Hence we get the required exact sequence.
\end{proof}

\begin{lem}
For any objects $B,C$, let $\mathcal J (B,C)=\{g\in \Hom_{\B}(B,C) \text{ }|\text{ } H(\underline f) =0\}$, then $\mathcal J$ is an ideal of $\B$.
\end{lem}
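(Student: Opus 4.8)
The plan is to recognize $\mathcal J$ as the kernel ideal of an additive functor and then invoke the general principle that the morphisms annihilated by any additive functor form a two-sided ideal. Concretely, let $\pi\colon \B\to\underline\B$ denote the natural quotient functor $g\mapsto\underline g$, and set $F:=H\circ\pi\colon\B\to\mod\underline{\Omega\T}$, so that (reading the defining condition as $H(\underline g)=0$, matching the morphism $g$ named in the statement) one has $g\in\mathcal J(B,C)$ precisely when $F(g)=H(\underline g)=0$. The functor $\pi$ is additive by construction of the quotient by $\mathcal P$, and $H=\Hom_{\underline\B}(\Omega\T,-)$ is additive since it is a $\Hom$-functor valued in an additive category; hence $F$ is an additive functor, and composition-preserving.

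First I would check that each $\mathcal J(B,C)$ is a subgroup of $\Hom_{\B}(B,C)$. The zero morphism lies in $\mathcal J(B,C)$ because $F(0)=0$, and for any $g_1,g_2\in\mathcal J(B,C)$ the additivity of $F$ gives $F(g_1-g_2)=F(g_1)-F(g_2)=0$, so $g_1-g_2\in\mathcal J(B,C)$. This establishes closure under subtraction, hence the subgroup property.

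Next I would verify the two-sided absorption. Given $g\in\mathcal J(B,C)$ and arbitrary morphisms $a\colon A\to B$ and $c\colon C\to D$ in $\B$, functoriality of $F$ yields
$$F(cga)=F(c)\,F(g)\,F(a)=F(c)\cdot 0\cdot F(a)=0,$$
so that $cga\in\mathcal J(A,D)$. Combined with the subgroup property from the previous step, this shows $\mathcal J$ is an ideal of $\B$.

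I do not expect any genuine obstacle here: the statement is the standard fact that the kernel of an additive functor is an ideal, and the only points deserving explicit mention are that $\pi$ is additive (so that $\underline{g_1-g_2}=\underline{g_1}-\underline{g_2}$) and that $H$ is additive and preserves composition, both of which are immediate from the definitions recalled earlier in the preliminaries.
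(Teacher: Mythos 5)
Your proof is correct, but it takes a genuinely different route from the paper's. You identify $\mathcal J$ as the kernel of the additive functor $F=H\circ\pi$ (where $\pi\colon\B\to\underline\B$ is the projection) and conclude by pure functoriality: $F(cga)=F(c)F(g)F(a)=0$ for the absorption property, and additivity of $F$ for the subgroup property; this is the standard fact that the morphisms annihilated by an additive functor form a two-sided ideal, and it needs no hypotheses beyond $H$ and $\pi$ being additive functors. The paper argues instead via approximations: using Lemma \ref{contra} (contravariant finiteness of $\Omega\T$), it notes that $H(\underline x)=0$ if and only if $\underline{x\,r}=0$ for a single right $\Omega\T$-approximation $r$ of the domain of $x$, then lifts $fr_A$ through the approximation $r_B$ of $B$ and chases the resulting equations; it also silently skips the subgroup check, which you spell out. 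Your argument is the more elementary and more general one --- unwound, it just says that for every $\underline u\colon R\to A$ with $R\in\Omega\T$ one has $\underline{cgau}=\underline c\,\underline g\,(\underline{au})=0$, since $\underline{au}$ is itself a map out of $\Omega\T$ --- and it does not depend on the standing contravariant finiteness assumption on $\T$. What the paper's proof buys in exchange is the explicit reduction of the condition $H(\underline x)=0$ to a single test morphism $r$, a device it reuses in later arguments (e.g.\ in the proofs of Theorem \ref{main1} and Proposition \ref{eq2}). Both readings correct the same typo in the statement, taking the defining condition to be $H(\underline g)=0$ for the named morphism $g$.
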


\begin{proof}
We only need to check that if $g\in \J(B,C)$, then for any morphism $f:A\to B$ and $h:B\to C$, we have $hgf\in \J(A,C)$.\\
By Lemma \ref{contra}, there is a right $\Omega \T$-approximation $r_A:R_A\to A$ of $A$, then $H(\underline {hgf})=0$ if and only if $\underline {hgfr_A}=0$. There is also a right $\Omega \T$-approximation $r_B:R_B\to B$ of $B$, hence there exists a morphism $r:R_A\to R_B$ such that $r_Br=fr_A$. Since $H(\underline {g})=0$ if and only if $\underline {gr_B}=0$, we have $H(\underline {g})=0 \Leftrightarrow \underline {gr_B}=0  \Rightarrow \underline {hgr_Br}=0 \Leftrightarrow \underline {hgfr_A}=0\Leftrightarrow H(\underline {hgf})=0$.
\end{proof}

\begin{rem}\label{ideal}
We get following commutative diagram if $H$ is a quotient functor
$$\xymatrix@C=0.5cm@R0.5cm{
\B \ar[rr]^H \ar[dr]_{\pi} && \mod \underline{\Omega \T}\\
&\B/\mathcal J \ar[ur]_-{\simeq}
}
$$
where $\pi:\B\to \B/\mathcal J$ is the natural quotient functor. Since $\mod \underline{\Omega \T}$ is abelian, $\B/\mathcal J $ is also abelian.
\end{rem}

The following remark is very useful in the proofs.

\begin{rem}\label{remark}
If we have a commutative diagram
$$\xymatrix{
\Omega B \ar@{=}[r] \ar[d]_f &\Omega B \ar[d]\\
\Omega C \ar[d] \ar[r] &P \ar[r] \ar[d] &C \ar@{=}[d] \ar@{-->}[r]&\\
A \ar[r] \ar@{-->}[d] &B \ar[r] \ar@{-->}[d] &C \ar@{-->}[r] &\\
&& &&
}
$$
where $P\in \mathcal P$ and $B\in \T$, then $\Hom_{\B}(f,P_0)$ is full if $P_0\in \mathcal P$.
\end{rem}

\textbf{Now we can prove Theorem \ref{main1}.}

\begin{proof}
(I) We show (a) implies dense.

Let $0\neq F\in \mod\underline {\Omega \T}$. Then it admits the following exact sequence: $H(R_1) \xrightarrow{H(\underline f)} H(R_2) \xrightarrow{\alpha} F\to 0$ where $R_1,R_2\in \Omega \T$. If $\alpha$ is already an isomorphism, then there is nothing more to show. Otherwise $H(\underline f)$ can be the composition of the monomorphism $\Ker \alpha \to H(R_2)$ and a right minimal epimorphism $H(R_1)\to \Ker \alpha$, hence $H(\underline f)$ is right minimal, now by Lemma \ref{min}, $\underline f$ is right minimal. Under this setting, we have $R_i\in \Omega \widehat{\T}$. By Lemma \ref{tec}, there exists a commutative diagram
$$\xymatrix{
R_1 \ar@{=}[r] \ar[d]_{f'} &R_1\ar[d]\\
R_2' \ar[d]_g \ar[r] &P \ar[r] \ar[d] &T_2 \ar@{=}[d] \ar@{-->}[r]&\\
B \ar[r]_h \ar@{-->}[d] &T_1' \ar[r] \ar@{-->}[d] &T_2 \ar@{-->}[r] &\\
&& &&
}
$$
where $P\in \mathcal P$, $\underline {f'}=\underline f$, $R'_2=R_2$ and  $T_i\in \T$ in $\underline B$. By (a), $H(\underline h)=0$. By Lemma \ref{exact}, we get an exact sequence $$H(R_1) \xrightarrow{H(\underline f)} H(R_2) \xrightarrow{H(\underline g)} H(B)\to 0.$$ Hence there is a commutative diagram:
$$\xymatrix{
H(R_1) \ar@{=}[d] \ar[r]^-{H(\underline f)} &H(R_2) \ar@{=}[d] \ar[r]^-{\alpha} &F \ar@{.>}[d]^{\beta} \ar[r] &0\\
H(R_1) \ar[r]^-{H(\underline f)} &H(R_2) \ar[r]^-{H(\underline g)} &H(B)
\ar[r] &0
}
$$
Since $\mod\underline {\Omega \T}$ is abelian, $\alpha$ and $H(\underline g)$ are cokernels of $H(\underline f)$, we get $\beta$ is an isomorphism.
\medskip

(II) We show (b) implies full.
\medskip

Let $A,A'$ be objects in $\B$, we assume they do not have direct summand in $\K$. By (b), we have the following commutative diagrams
$$\xymatrix{
R_1 \ar@{=}[r] \ar[d]_f &R_1 \ar[d]^p\\
R_2 \ar[d]_g \ar[r]_r &P \ar[r] \ar[d] &T_2 \ar@{=}[d] \ar@{-->}[r]&\\
A \ar[r]_h \ar@{-->}[d] &T_1 \ar[r] \ar@{-->}[d] &T_2 \ar@{-->}[r] &\\
&& &&
\\} \quad
\xymatrix{
R_1' \ar@{=}[r] \ar[d]_{f'} &R_1' \ar[d]\\
R_2' \ar[d]_{g'} \ar[r] &P' \ar[r] \ar[d] &{T_2}' \ar@{=}[d] \ar@{-->}[r]&\\
A' \ar[r]_{h'} \ar@{-->}[d] &{T_1}' \ar[r] \ar@{-->}[d] &{T_2}' \ar@{-->}[r] &\\
&& &&
}$$

Let $\alpha: H(A)\to H(A')$ be a morphism in $\mod\underline {\Omega \T}$. Since $H(\underline {\Omega \T})$ is the subcategory of projective objects in $\mod\underline {\Omega \T}$, by Lemma \ref{exact} we have the following commutative diagram of exact sequences
$$\xymatrix{
H(R_1) \ar@{.>}[d]^{\alpha_1} \ar[r]^-{H(\underline f)} &H(R_2) \ar@{.>}[d]_{\alpha_2} \ar[r]^{H(\underline g)} &H(A) \ar[d]^{\alpha} \ar[r] &0\\
H(R_1') \ar[r]^-{H(\underline {f'})} &H(R_2') \ar[r]^-{H(\underline {g'})} &H(A')  \ar[r] &0
}
$$
By Yoneda's Lemma, $\alpha_i=H(\underline {r_i})$, hence $\underline {r_2f}=\underline {f'r_1}$, then we have $g'r_2f:R_1\xrightarrow{p} P_0\xrightarrow{q_0} A'$ where $P_0\in \mathcal P$. By Remark \ref{remark}, we have $p:R_1\xrightarrow{f} R_2\xrightarrow{r'} P_0$. Since $P_0$ is a projective object, we have $q_0:P_0 \xrightarrow{q'} R_2'\xrightarrow{g'} A'$. Hence $g'r_2f=g'q'p$, then we have $r_2f-q'p=(r_2-q'r')f: R_1\xrightarrow{r_1'} R_1'\xrightarrow{f'} R_2'$.  Hence we have the following commutative diagram:
$$\xymatrix{
R_1 \ar[r]^f \ar[d]_{r_1'} &R_2 \ar[r]^g \ar[d]^{r_2-q'r'} &A \ar[d]^a \ar@{-->}[r] &\\
R_1' \ar[r]_{f'} &R_2' \ar[r]_{g'} &A' \ar@{-->}[r] &
}
$$
which implies a commutative diagram
$$
\xymatrix{
H(R_2) \ar[r]^-{H(\underline g)} \ar[d]_{H(\underline {r_2})} &H(A) \ar[d]^{H(\underline a)} \ar[r] &0\\
H(R_2') \ar[r]_-{H(\underline {g'})} &H(A') \ar[r] &0
}
$$
Hence $\alpha=H(\underline a)$.
\medskip

(III) We show full and dense implies (a).

\medskip

Assume in the following commutative diagram
$$\xymatrix{
R_1 \ar@{=}[r] \ar[d]_{f} &R_1\ar[d]^{p_1}\\
R_2 \ar[d]_g \ar[r]^{r} &P \ar[r] \ar[d] &T_2 \ar@{=}[d] \ar@{-->}[r]&\\
A \ar[r]_h \ar@{-->}[d] &T_1 \ar[r]_t \ar@{-->}[d] &T_2 \ar@{-->}[r] &\\
&& &&
}
$$
where $P\in \mathcal P$ and $T_i\in \T$, $\underline f\neq 0$ is right minimal, we show that $H(\underline h)$. By Lemma \ref{exact}, We have the following exact sequence:
$$H(R_1) \xrightarrow{H(\underline f)} H(R_2) \xrightarrow{H(\underline g)}  H(B) \xrightarrow{H(\underline h)} H(T_1)$$
If $H(\underline g)=0$, we get $\underline g=0$, which implies $g$ factors through $r'$, hence $\underline t$ is a split epimorphism, a contradiction to the fact that $\underline t$ is right minimal by Lemma \ref{min}. Since $H$ is full and dense, we have epic-monic factorization $H(R_2) \xrightarrow{H(\underline u)} H(C) \xrightarrow{H(\underline v)} H(B)$ of $H(\underline g)$, then $\underline g=\underline {vu}$. We can assume that $C$ does not have direct summand in $\K$. Since $H(\underline u)H(\underline f)=0$, we have $\underline {uf}=0$, then it has the form $uf:R_1 \xrightarrow{p_1} P \xrightarrow{q_1} C$. Hence $(u-q_1r)f=0$, and there is a morphism $v':B\to C$ such that $u-q_1r=v'g$. Denote $u-q_1r$ by $u'$, we have $\underline {u'}=\underline u$. Since $g-vu'$ factors through $r$, it has the form $R_2\xrightarrow{r} P \xrightarrow{q_B} B$. Now $\underline {v'vu'}=\underline {v'g}=\underline {u'}$, we will show that $\underline u'$ is left minimal.

Since $\B$ is Krull-Schmidt, $\B/\mathcal P$ is also Krull-Schmidt. Then $\underline{u'}$ has the form $R_2\xrightarrow{\svecv{\underline{u_1}}{0}} C_1\oplus C_2$ where $\underline{u_1}$ is a left minimal morphism. Then we have $$H(\underline {u'}): H(R_2)\xrightarrow{\svecv{H(\underline{u_1})}{0}} H(C_1)\oplus H(C_2).$$ Since $H(\underline {u'})$ is an epimorphism, we have $H(C_2)=0$, which implies $C_2=0$ by the assumption of $C$. Hence $\underline {u'}$ is also left minimal. Then $\underline {v'v}=\underline {1_{C}}$, $C$ is a direct summand of $A\oplus P'$ where $P'\in \mathcal P$. Since we assume that $C$ has no direct summand in $\K$, it is a direct summand of $A$. Let $A=C\oplus D$, then we have $R_2\xrightarrow{g=\svecv{g_1}{g_2}} C\oplus D\xrightarrow{h=\svech{h_1}{h_2}} T_1$. We also have $C\xrightarrow{v=\svecv{v_1}{v_2}} C\oplus D\xrightarrow{\svech{v_1'}{v_2'}} D$ such that $v_1'v_1+v_2'v_2=1_C$ where $v_1$ is an isomorphism, then $\svecv{g_1}{g_2}-\svecv{v_1}{v_2}u'=\svecv{q_1}{q_2}r$. Let $v_1''$ be the inverse of $v_1$, since $\svech{-v_2v_1''}{1_D}\svecv{g_1}{g_2}=g_2-v_2v_1''g_1=q_2r+v_2u'-v_2v_1''g_1=q_2r+v_2v_1''(v_1u'-g_1)=q_2r-v_2v_1''q_1r=\svech{-v_2v_1''}{1_D}\svecv{q_1}{q_2}r$. Hence we have the following commutative diagram
$$\xymatrix{
R_2 \ar[d]_-{\svecv{g_1}{g_2}} \ar[rr]^{r} &&P \ar[d] \ar[rdd] \\
C\oplus D \ar[rr]^-{h=\svech{h_1}{h_2}} \ar[rrrd]_{\svech{-v_2v_1''}{1_D}}  &&T_1 \ar@{.>}[rd]\\
&&&D
}
$$
which implies $D$ is a direct summand of $T_1$. Let $T_1= E\oplus D$, the we have $$C\oplus D\xrightarrow{\svech{h_1}{h_2}=\left(\begin{smallmatrix}
h_{11}&d_1\\
h_{21}&d_2
\end{smallmatrix}\right)} E\oplus D \xrightarrow{t=\svech{t_1}{t_2}} T_2$$ where $d_2$ is an isomorphism. Then we have an isomorphism $$E\oplus D \xrightarrow{\left(\begin{smallmatrix}
1_E&d_1\\
0&d_2
\end{smallmatrix}\right)} E\oplus D$$ such that $\svech{t_1}{t_2}\left(\begin{smallmatrix}
1_E&d_1\\
0&d_2
\end{smallmatrix}\right)=\svech{t_1}{0}.$ But $\underline t$ is right minimal, it implies $H(D)=0$. This means $H(\underline v)$ is an isomorphism, hence $H(\underline g)$ is an epimorphism and $H(\underline h)=0$.
\smallskip

(IV) We show full and dense implies (b).

\smallskip
Let $A\notin \K$ be an indecomposable object, then it admits an exact sequence $$H(R_1) \xrightarrow{H(\underline f)} H(R_2) \xrightarrow{H(\underline g)} H(A)\to 0.$$ If $H(\underline f)=0,$  $H(\underline g)$ becomes an isomorphism, $H(g)\neq 0$, hence $R_2$ is an direct summand of $A\oplus P'$, where $P'\in \mathcal P$. Since $A$ is indecomposable and $R_2\notin \K$, $A$ is a direct summand of $R_2$, then by Lemma \ref{summand}, $A\in \Omega \T$. Hence it admits a commutative diagram
$$\xymatrix{
0 \ar@{=}[r] \ar[d] &0 \ar[d]\\
A \ar@{=}[d] \ar[r]^h &P_A \ar[r] \ar@{=}[d] &T_A \ar@{=}[d] \ar@{-->}[r]&\\
A \ar[r]_h \ar@{-->}[d] &P_A \ar[r] \ar@{-->}[d] &T_A \ar@{-->}[r] &\\
&& &&
}
$$
where $H(\underline h)=0$. Otherwise, we can assume that $H(\underline f)\neq 0$ is right minimal, hence $0\neq \underline f$ is right minimal.By Lemma \ref{tec}, there is a commutative diagram
$$\xymatrix{
R_1 \ar@{=}[r] \ar[d]_{f'} &R_1\ar[d]\\
R_2' \ar[d]_{g'} \ar[r] &P \ar[r] \ar[d] &T_2 \ar@{=}[d] \ar@{-->}[r]&\\
B \ar[r]_h \ar@{-->}[d] &T_1 \ar[r] \ar@{-->}[d] &T_2 \ar@{-->}[r] &\\
&& &&
}
$$
where $P\in \mathcal P$, $\underline {f'}=\underline f$, $T_i\in \T$ and $R'_2=R_2$ in $\underline B$. By (a), we have $H(\underline h)=0$, then we have a short exact sequence $H(R_1)\xrightarrow{H(\underline {f})} H(R_2)\xrightarrow{H(\underline {g'})} H(B)\to 0
$. Hence we have the following commutative diagram
$$\xymatrix{
H(R_2) \ar[r]^{H(\underline {g'})} \ar@{=}[d] &H(B) \ar@{.>}[d]^{H(\underline a)}_{\simeq} \ar[r] &0\\
H(R_2) \ar[r]^{H(\underline g)} \ar@{=}[d] &H(A) \ar@{.>}[d]^{H(\underline b)}_{\simeq} \ar[r] &0\\
H(R_2) \ar[r]^{H(\underline {g'})}  &H(B) \ar[r] &0\\
}
$$
Since $A$ is indecomposable, $ab$ must be an isomorphism. which implies $A$ is a direct summand of $B$. We also have $\underline {g'}=\underline {bg}$. Then the rest argument is similar as (III).
\end{proof}

When we consider $\B^{\op}$, we have the following theorem, which is the dual of Theorem \ref{main1}.

\begin{thm}\label{mainop}
Let $\T$ be a covariantly finite subcategory such that $\mathcal I\subset \T$ and $\EE(\mathcal I,\T)=0$, the functor $\Hom_{\overline \B}(-,\Sigma \T)$ is a quotient functor, if and only if the following conditions are satisfied:
\begin{itemize}
\item[(a$^{\op}$)] In the following commutative diagram
$$\xymatrix{
T_2 \ar[r] \ar@{=}[d] &T_1 \ar[d] \ar[r]^h &A \ar[d] \ar@{-->}[r] &\\
T_2 \ar[r] &I \ar[r] \ar[d] &S_2 \ar[d]_g \ar@{-->}[r] &\\
&S_1 \ar@{=}[r] \ar@{-->}[d] &S_1 \ar@{-->}[d]\\
&& &&}
$$
where $T_1,T_2\in \T$ and $I\in \mathcal I$, if $0\neq \overline g$ if left minimal, then $\Hom_{\overline \B}(\overline h,\Sigma T)=0$.
\item[(b$^{\op}$)] For any indecomposable object $A$ such that $\Hom_{\overline \B}(A,\Sigma \T) \neq 0$, $A$ admits a commutative diagram
$$\xymatrix{
T_2 \ar[r] \ar@{=}[d] &T_1 \ar[d] \ar[r]^h &A \ar[d] \ar@{-->}[r] &\\
T_2 \ar[r] &I \ar[r] \ar[d] &S_2 \ar[d]_g \ar@{-->}[r] &\\
&S_1 \ar@{=}[r] \ar@{-->}[d] &S_1 \ar@{-->}[d]\\
&& &&}
$$
where $T_1,T_2\in \T$, $I\in \mathcal I$ and $\Hom_{\overline \B}(\overline h,\Sigma \T)=0$.
\end{itemize}

\end{thm}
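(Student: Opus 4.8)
The plan is to derive Theorem \ref{mainop} as the formal dual of Theorem \ref{main1}, by transporting the entire setup to the opposite extriangulated category. Recall from \cite{NP} that if $(\B,\EE,\mathfrak{s})$ is extriangulated then so is its opposite $(\B^{\op},\EE^{\op},\mathfrak{s}^{\op})$, where $\EE^{\op}(A,C)=\EE(C,A)$ and where an $\EE$-triangle $X\to Y\to Z\dashrightarrow$ in $\B$ becomes the $\EE^{\op}$-triangle $Z\to Y\to X\dashrightarrow$ in $\B^{\op}$. Under this passage projectives and injectives are interchanged: the projectives of $\B^{\op}$ are the injectives $\mathcal I$ of $\B$, and the injectives of $\B^{\op}$ are the projectives $\mathcal P$ of $\B$. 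Consequently $\B^{\op}$ again has enough projectives and enough injectives and is Hom-finite and Krull-Schmidt over $k$, so all standing hypotheses of Section~3 hold in $\B^{\op}$.

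First I would fix the dictionary. Reversing the defining $\EE$-triangle $T\to I\to\Sigma T\dashrightarrow$ shows that the syzygy object $\Sigma T$ formed in $\B$ is exactly the cosyzygy object of $\B^{\op}$ (a cone over an injective becomes a cocone over a projective of $\B^{\op}$). Dually $\underline{\B^{\op}}=\B^{\op}/\mathcal P^{\op}$ is identified with $(\B/\mathcal I)^{\op}=\overline{\B}^{\op}$, so that the functor $\Hom_{\underline{\B^{\op}}}(\Omega\T,-)$ built inside $\B^{\op}$ becomes
$$\Hom_{\underline{\B^{\op}}}(\Omega \T,-)\;\cong\;\Hom_{\overline{\B}^{\op}}(\Sigma \T,-)\;=\;\Hom_{\overline{\B}}(-,\Sigma \T),$$
where $\Omega\T$ on the left is the cosyzygy taken in $\B^{\op}$ and equals $\Sigma\T$ of $\B$. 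The remaining hypotheses match on the nose: $\T$ covariantly finite in $\B$ means $\T$ is contravariantly finite in $\B^{\op}$; the inclusion $\mathcal I\subset\T$ becomes $\mathcal P^{\op}\subset\T$; and $\EE(\mathcal I,\T)=0$ becomes $\EE^{\op}(\T,\mathcal P^{\op})=0$. Thus the assumptions preceding Theorem \ref{main1}, read in $\B^{\op}$, are precisely those of Theorem \ref{mainop}.

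Next I would verify that conditions (a),(b) of Theorem \ref{main1}, transported to $\B^{\op}$, are literally (a$^{\op}$),(b$^{\op}$). Reversing every arrow in the $3\times 3$ diagram of (a) and relabelling $R_i\mapsto S_i$, $P\mapsto I$ turns the conflation $R_2\to P\to T_2$ into $T_2\to I\to S_2$, the map $f\colon R_1\to R_2$ into $g\colon S_2\to S_1$, and $h\colon A\to T_1$ into $h\colon T_1\to A$; this is exactly the diagram displayed in (a$^{\op}$). Since right minimality in $\B$ is left minimality in $\B^{\op}$, and since $\Hom_{\underline{\B^{\op}}}(\Omega\T,\underline h)=\Hom_{\overline{\B}}(\overline h,\Sigma\T)$ by the identification above, condition (a) in $\B^{\op}$ reads exactly as (a$^{\op}$). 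The same relabelling converts (b) into (b$^{\op}$), using that the indecomposable objects $A$ with $\Hom_{\underline{\B^{\op}}}(\Omega\T,A)\neq 0$ in $\B^{\op}$ are precisely those with $\Hom_{\overline{\B}}(A,\Sigma\T)\neq 0$. Applying Theorem \ref{main1} verbatim in $\B^{\op}$ then gives the stated equivalence.

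The only genuine work is this last transposition: one must confirm that the opposite of each $\EE$-triangle occurring in the two diagrams is realized correctly in $\B^{\op}$ and that the minimality and Hom-vanishing conditions dualize cleanly. I expect no real obstacle, since every ingredient is either self-dual or interchanged by $(-)^{\op}$; the care required is purely bookkeeping, checking for instance that the characterization $\Hom_{\overline{\B}}(Y,\Sigma\T)=0\Leftrightarrow\EE(Y,\T)=0$ of Remark \ref{useful}(d) is exactly the $\B^{\op}$-image of $\Hom_{\underline{\B}}(\Omega\T,X)=0\Leftrightarrow\EE(\T,X)=0$.
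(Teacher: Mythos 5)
Your proposal is correct and is exactly the paper's approach: the paper derives Theorem \ref{mainop} from Theorem \ref{main1} precisely by passing to the opposite extriangulated category ("When we consider $\B^{\op}$..."), offering no further detail. Your dictionary (projectives/injectives swapped, $\Sigma\T$ in $\B$ equals $\Omega\T$ in $\B^{\op}$, $\underline{\B^{\op}}\simeq\overline{\B}^{\op}$, right/left minimality interchanged) is the bookkeeping the paper leaves implicit, and it is carried out correctly.
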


We conclude this section with two examples illustrating Theorem \ref{main1}.

\begin{exm}\label{ex1}
Let $\Lambda$ be the the $k$-algebra given by the quiver
$$\xymatrix@C=0.5cm@R0.5cm{
\centerdot \ar[r]^x &\centerdot \ar[r]^x &\centerdot \ar[r]^x &\centerdot \ar[r]^x &\centerdot \ar[r]^x &\centerdot \ar[r]^x &\centerdot
}
$$
with relation $x^3=0$. Then the AR-quiver of $\B:=\mod\Lambda$ is given by
$$\xymatrix@C=0.4cm@R0.4cm{
 &&\centerdot \ar[dr] &&\centerdot \ar[dr] &&\centerdot \ar[dr] &&\centerdot \ar[dr] &&\centerdot \ar[dr]\\
 &\centerdot \ar[dr] \ar@{.}[rr] \ar[ur] &&\centerdot \ar@{.}[rr] \ar[dr] \ar[ur] &&\centerdot \ar[dr] \ar[ur] \ar@{.}[rr] &&\centerdot \ar@{.}[rr] \ar[dr] \ar[ur] &&\centerdot \ar[dr] \ar[ur] \ar@{.}[rr] && \centerdot \ar[dr]\\
\centerdot \ar@{.}[rr] \ar[ur] &&\centerdot \ar@{.}[rr] \ar[ur] &&\centerdot \ar@{.}[rr] \ar[ur] &&\centerdot \ar@{.}[rr] \ar[ur] &&\centerdot \ar@{.}[rr] \ar[ur] &&\centerdot \ar[ur] \ar@{.}[rr] &&\centerdot
}
$$
We denote by ``$\circ$" in the AR-quiver the indecomposable objects belong to a subcategory and by ``$\cdot$'' the indecomposable objects do not belong to it. Let
$$\xymatrix@C=0.4cm@R0.4cm{
 &&\circ \ar[dr] &&\circ \ar[dr] &&\circ \ar[dr] &&\circ \ar[dr] &&\circ \ar[dr]\\
\T= &\circ \ar[dr] \ar@{.}[rr] \ar[ur] &&\centerdot \ar@{.}[rr] \ar[dr] \ar[ur] &&\circ \ar[dr] \ar[ur] \ar@{.}[rr] &&\centerdot \ar@{.}[rr] \ar[dr] \ar[ur] &&\centerdot \ar[dr] \ar[ur] \ar@{.}[rr] && \circ \ar[dr]\\
\circ \ar@{.}[rr] \ar[ur] &&\centerdot \ar@{.}[rr] \ar[ur] &&\centerdot \ar@{.}[rr] \ar[ur] &&\circ \ar@{.}[rr] \ar[ur] &&\centerdot \ar@{.}[rr] \ar[ur] &&\centerdot \ar[ur] \ar@{.}[rr] &&\circ
}
$$
we can get
$$\xymatrix@C=0.4cm@R0.4cm{
 &&\circ \ar[dr] &&\circ \ar[dr] &&\circ \ar[dr] &&\circ \ar[dr] &&\circ \ar[dr]\\
\K= &\circ \ar[dr] \ar@{.}[rr] \ar[ur] &&\centerdot \ar@{.}[rr] \ar[dr] \ar[ur] &&\circ \ar[dr] \ar[ur] \ar@{.}[rr] &&\circ \ar@{.}[rr] \ar[dr] \ar[ur] &&\centerdot \ar[dr] \ar[ur] \ar@{.}[rr] && \circ \ar[dr]\\
\circ \ar@{.}[rr] \ar[ur] &&\centerdot \ar@{.}[rr] \ar[ur] &&\centerdot \ar@{.}[rr] \ar[ur] &&\circ \ar@{.}[rr] \ar[ur] &&\centerdot \ar@{.}[rr] \ar[ur] &&\centerdot \ar[ur] \ar@{.}[rr] &&\circ
}
$$
Since in this case $\T$ is rigid, in fact $(\T,\K)$ is a cotorsion pair (see \cite{NP} for the definition of a cotorsion pair). Then $H$ is a quotient functor if and only if $\B/\K$ is abelian (see Proposition \ref{eq2}). It is the case in this example, and we have $\B/\K\simeq \mod \underline {\Omega \T}$.
\end{exm}

\begin{exm}\label{ex2}
Let $\Lambda$ be the the $k$-algebra given by the quiver
$$\xymatrix@C=0.5cm@R0.5cm{
\centerdot \ar[r]^x &\centerdot \ar[r]^x &\centerdot \ar[r]^x &\centerdot \ar[r]^x &\centerdot \ar[r]^x &\centerdot
}
$$
with relation $x^3=0$. Then the AR-quiver of $\B:=\mod\Lambda$ is given by
$$\xymatrix@C=0.4cm@R0.4cm{
 &&\centerdot \ar[dr] &&\centerdot \ar[dr] &&\centerdot \ar[dr] &&\centerdot \ar[dr] \\
&\centerdot \ar[dr] \ar@{.}[rr] \ar[ur] &&\centerdot \ar@{.}[rr] \ar[dr] \ar[ur] &&\centerdot \ar[dr] \ar[ur] \ar@{.}[rr] &&\centerdot \ar@{.}[rr] \ar[dr] \ar[ur] &&\centerdot \ar[dr]  \\
\centerdot \ar@{.}[rr] \ar[ur] &&\centerdot \ar@{.}[rr] \ar[ur] &&\centerdot \ar@{.}[rr] \ar[ur] &&\centerdot \ar@{.}[rr] \ar[ur] &&\centerdot \ar@{.}[rr] \ar[ur] &&\centerdot
}
$$
We denote by ``$\circ$" in the AR-quiver the indecomposable objects belong to a subcategory and by ``$\cdot$'' the indecomposable objects do not belong to it. Let
$$\xymatrix@C=0.4cm@R0.4cm{
 &&\circ \ar[dr] &&\circ \ar[dr] &&\circ \ar[dr] &&\circ \ar[dr] \\
\T= &\circ \ar[dr] \ar@{.}[rr] \ar[ur] &&\centerdot \ar@{.}[rr] \ar[dr] \ar[ur] &&\circ \ar[dr] \ar[ur] \ar@{.}[rr] &&\centerdot \ar@{.}[rr] \ar[dr] \ar[ur] &&\centerdot \ar[dr]  \\
\circ \ar@{.}[rr] \ar[ur] &&\centerdot \ar@{.}[rr] \ar[ur] &&\centerdot \ar@{.}[rr] \ar[ur] &&\circ \ar@{.}[rr] \ar[ur] &&\circ \ar@{.}[rr] \ar[ur] &&\centerdot
}
$$
then we have
$$\xymatrix@C=0.4cm@R0.4cm{
 &&\circ \ar[dr] &&\circ \ar[dr] &&\circ \ar[dr] &&\circ \ar[dr] \\
\K= &\circ \ar[dr] \ar@{.}[rr] \ar[ur] &&\centerdot \ar@{.}[rr] \ar[dr] \ar[ur] &&\centerdot \ar[dr] \ar[ur] \ar@{.}[rr] &&\circ \ar@{.}[rr] \ar[dr] \ar[ur] &&\circ \ar[dr]  \\
\circ \ar@{.}[rr] \ar[ur] &&\centerdot \ar@{.}[rr] \ar[ur] &&\centerdot \ar@{.}[rr] \ar[ur] &&\centerdot \ar@{.}[rr] \ar[ur] &&\circ \ar@{.}[rr] \ar[ur] &&\circ
}
$$
In this example we can find that any indecomposable object $A$ admits a short exact sequence $0\to A\xrightarrow{a} T^1\to T^2 \to 0$ such that $T^1,T^2\in \T$ and $H(\underline a)=0$. For any short exact sequence $0\to B\xrightarrow{b} T_1\to T_2 \to 0$ where $T_1,T_2\in \T$ we also have $H(\underline b)=0$. By Theorem \ref{main1}, $H$ is a quotient functor.
\end{exm}

\section{The Second Main Result }

From this section, let $\T$ be a functorially finite subcategory of $\B$ such that
\begin{itemize}
\item $\mathcal P\subset \T$ and $\mathcal I\subset \T$;
\item $\EE(\T,\mathcal P)=0=\EE(\mathcal I,\T)$.
\item ${^{\bot_1}}\T=\K$, where ${^{\bot_1}}\T$ is the subcategory of objects $X\in\B$ satisfying
$\EE(X,\T)=0$.
\end{itemize}

Under this assumption, when $(\T,\K)$ is a cotorsion pair (for definition of cotorsion pair on extriangulated category, please see \cite{NP}), we have the following equivalent condition.

\begin{prop}\label{eq2}
When $(\T,\K)$ is a cotorsion pair, the following conditions are equivalent:
\begin{itemize}
\item[(i)] $\B/\K$ is abelian.
\item[(ii)] $H$ is a quotient functor.
\end{itemize}
\end{prop}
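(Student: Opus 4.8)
The plan is to reduce both conditions to the single identity $\J=[\K]$, where $[\K]$ denotes the ideal of morphisms factoring through $\K$, and to exploit a description of $H$ as an $\EE$-functor. First I would record that, for each $T\in\T$, the syzygy $\EE$-triangle $\Omega T\xrightarrow{\iota}P\xrightarrow{\pi}T\overset{\eta}{\dashrightarrow}$ with $P\in\mathcal P$ yields a connecting map $\partial\colon\Hom_{\B}(\Omega T,X)\to\EE(T,X)$, $f\mapsto f_{\ast}\eta$, which is surjective (since $\EE(P,X)=0$) and whose kernel is exactly the maps factoring through $\mathcal P$: any $\Omega T\xrightarrow{a}P'\xrightarrow{b}X$ with $P'\in\mathcal P$ has $a_{\ast}\eta\in\EE(T,P')=0$ by the hypothesis $\EE(\T,\mathcal P)=0$, so $f_{\ast}\eta=0$. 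Thus $\partial$ induces a natural isomorphism $H\cong\EE(\T,-)$ under which $H(\underline g)$ corresponds to $g_{\ast}$. In particular $H(\underline g)=0$ if and only if $g_{\ast}\colon\EE(T,A)\to\EE(T,B)$ vanishes for every $T\in\T$; this is the morphism form of Remark \ref{useful}(d), and it is what I use below.

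Next I would prove the key lemma $\J=[\K]$. The inclusion $[\K]\subseteq\J$ is immediate: if $g$ factors as $A\to K\to B$ with $K\in\K$, then $g_{\ast}$ factors through $\EE(T,K)=0$. For the reverse inclusion, let $g\in\J(A,B)$ and use the cotorsion pair $(\T,\K)$ to pick an $\EE$-triangle $A\xrightarrow{i}K^{A}\to T^{A}\overset{\delta}{\dashrightarrow}$ with $K^{A}\in\K$ and $T^{A}\in\T$. Applying $\Hom_{\B}(-,B)$ gives the exact sequence $\Hom_{\B}(K^{A},B)\xrightarrow{i^{\ast}}\Hom_{\B}(A,B)\xrightarrow{\delta^{\sharp}}\EE(T^{A},B)$ with $\delta^{\sharp}(g)=g_{\ast}\delta$. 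Since $T^{A}\in\T$ and $H(\underline g)=0$, we get $g_{\ast}\delta=0$, so $g\in\operatorname{im}(i^{\ast})$ factors through $K^{A}\in\K$; hence $g\in[\K]$ and $\J=[\K]$. As $\EE(\T,\mathcal P)=0$ forces $\mathcal P\subseteq\K$, maps through $\mathcal P$ lie in $[\K]=\J$, so $\B\to\B/\K$ factors through $\underline{\B}$; write $\widetilde H\colon\B/\K=\B/\J\to\mod\underline{\Omega\T}$ for the induced functor, which is faithful precisely because $\J=\ker H$.

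The implication (ii)$\Rightarrow$(i) is then formal: if $H$ is a quotient functor, Remark \ref{ideal} makes $\widetilde H$ an equivalence $\B/\J\xrightarrow{\simeq}\mod\underline{\Omega\T}$, and the target is abelian by Proposition \ref{abelian}; since $\B/\J=\B/\K$, the category $\B/\K$ is abelian. For (i)$\Rightarrow$(ii) it suffices to upgrade the faithful $\widetilde H$ to an equivalence, and two cheap observations prepare this. First, for $R\in\Omega\T$ and any $A$, evaluating $H(\underline h)$ at $\id_{R}$ shows $H(\underline h)=0$ forces $\underline h=0$; hence $\J(R,A)=[\mathcal P](R,A)$ and Yoneda gives bijections $\Hom_{\B/\K}(R,A)\xrightarrow{\simeq}\Hom_{\mod\underline{\Omega\T}}(H(R),H(A))$, so $\widetilde H$ is fully faithful on morphisms out of $\Omega\T$ and sends $\Omega\T$ onto the projective generators of $\mod\underline{\Omega\T}$. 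Second, a right $\Omega\T$-approximation $r\colon R\to A$ (Lemma \ref{contra}) becomes an epimorphism in $\B/\K$: if $hr\in\J$, then writing any $\phi\colon\Omega T\to A$ as $r\psi$ gives $\underline{h\phi}=\underline{hr}\,\underline\psi=0$, so $h\in\J$.

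The decisive remaining point is that $\widetilde H$ preserves epimorphisms, equivalently that each object of $\Omega\T$ is projective in the abelian category $\B/\K$; this is the main obstacle, since a priori cokernels in $\B/\K$ are known only abstractly and must be matched with the $\EE$-structure of $\B$. I would settle it by computing cokernels in $\B/\K$ through the cotorsion pair: given $u\colon A\to B$, resolve $A$ and $B$ by the triangles furnished by $(\T,\K)$ and use the second observation above to realize $\Coker_{\B/\K}(\pi u)$ by an object of $\B$, then verify that $H$ takes this to $\Coker H(\underline u)$ in $\mod\underline{\Omega\T}$. Granting that $\widetilde H$ is right exact, density and fullness follow formally: any $F\in\mod\underline{\Omega\T}$ has a presentation $H(R_{1})\xrightarrow{H(\underline f)}H(R_{2})\to F\to 0$ with $R_{i}\in\Omega\T$, and $A:=\Coker_{\B/\K}(\pi f)$ satisfies $\widetilde H(A)\cong\Coker H(\underline f)=F$; while a morphism $H(A)\to H(A')$ lifts, via the projective presentations and the bijections above, to a morphism $A\to A'$ in $\B/\K$ mapping to it. Hence $\widetilde H$ is an equivalence and $H=\widetilde H\circ\pi$ is full and dense.
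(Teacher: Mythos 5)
Your first half is sound, and in places cleaner than the paper's own argument: the identification of $H$ with $\EE(\T,-)$ via the connecting morphism, the key lemma $\J=[\K]$ proved with the long exact sequence (the paper proves the same identity by an explicit diagram chase on indecomposable objects, so your route is tidier and avoids the reduction to indecomposables), the formal implication (ii)$\Rightarrow$(i), and your two preparatory observations (full faithfulness of $\widetilde H$ on morphisms out of $\Omega\T$, and that right $\Omega\T$-approximations become epimorphisms in $\B/\K$) are all correct.

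The genuine gap is precisely the step you yourself call ``the decisive remaining point'': that every object of $\Omega\T$ is projective in the abelian category $\B/\K$, equivalently that $\widetilde H$ preserves epimorphisms. You do not prove it; you only announce that you ``would settle it by computing cokernels in $\B/\K$ through the cotorsion pair'' and then ``verify'' that $H$ carries the result to the cokernel in $\mod\underline{\Omega\T}$. As stated this is circular: the abelian structure on $\B/\K$ is given purely abstractly by hypothesis (i), so to ``realize $\Coker_{\B/\K}(\pi u)$ by an object of $\B$'' you already need a concrete description of which morphisms of $\B/\K$ are epimorphisms, and producing that description is exactly the missing content. The paper supplies it as the technical heart of (i)$\Rightarrow$(ii): $\overline f\colon A\to B$ is an epimorphism in $\B/\K$ if and only if, taking the $\EE$-triangle $A\xrightarrow{k}K\to T\dashrightarrow$ (with $K\in\K$, $T\in\T$) furnished by the cotorsion pair and forming the induced $\EE$-triangle $B\xrightarrow{b}C\to T\dashrightarrow$ under $f$, one has $\overline b=0$; both directions rest on the exact sequence $\Hom_{\B}(K,D)\to\Hom_{\B}(A,D)\to\EE(T,D)$ and on the $\EE$-triangle $A\xrightarrow{\svecv{f}{k}}B\oplus K\xrightarrow{\svech{b}{-c}}C\dashrightarrow$. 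With this characterization in hand, projectivity of $\Omega T_0$ follows: given $t\colon\Omega T_0\to B$, the relation $\overline b=0$ together with $\J=[\K]$ gives $\underline{bt}=0$, hence $bt$ factors through $p_0\colon\Omega T_0\to P_0$ (every morphism from $\Omega T_0$ to a projective factors through $p_0$, since $\EE(T_0,\mathcal P)=0$), and the lift of $\overline t$ along $\overline f$ is then extracted from the displayed $\EE$-triangle using projectivity of $P_0$. Until you prove such a characterization, or establish the projectivity of $\Omega\T$ in $\B/\K$ by some other concrete argument, your proof of (i)$\Rightarrow$(ii) is an outline rather than a proof.
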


\begin{proof}
We first show that for a morphism $f:A\to B$ where $A,B$ is indecomposable, $H(\underline f)=0$ if and only if $f$ factors through $\K$.

The ``if" part is obvious. For the ``only if" part, it is trivial if $A\in \K$. Now we assume $A\notin \K$, then $A$ admits a commutative diagram:
$$\xymatrix{
\Omega K \ar@{=}[r] \ar[d] &\Omega K \ar[d]\\
\Omega T \ar[d]_g \ar[r]^p &P \ar[r] \ar[d]^q &T
 \ar@{=}[d] \ar@{-->}[r]&\\
A \ar[r]_h \ar@{-->}[d] &K \ar[r] \ar@{-->}[d] &T \ar@{-->}[r] &\\
&& &&
}
$$
where $T\in \T$, $K\in \K$ and $P\in \mathcal P$. Then we get an $\EE$-triangle $\xymatrix{\Omega T\ar[r]^-{\svecv{g}{p}} &A\oplus P\ar[r]^-{\svech{h}{-q}} &K\ar@{-->}[r] &}$. If $H(\underline f)=0$, we get $\underline {fg}=0$, then $fg$ factors through a object $P'\in \mathcal P$. Hence we have the following commutative diagram
$$\xymatrix@C=1.2cm{
\Omega T \ar[r]^-{\svecv{g}{p}} \ar[d]_{p'} &A\oplus P\ar[r]^{\svech{h}{-q}} \ar[d]^-{\svech{f}{0}} &K\ar@{-->}[r] &\\
P' \ar[r]_{q'} &B
}
$$
Since $\EE(K,P)=0$, there is a morphism $A\oplus P\xrightarrow{\svech{a}{b}}P'$ such that $p'=\svech{a}{b}\svecv{g}{p}$. Hence $$\svech{f-q'a,
}{-q'b}\svecv{g}{q}=0,$$ and there is a morphism $k:K\to B$ such that $f-q'a=kh$. This means $\underline f$ factors through $K\in \K$.
(ii)$\Rightarrow$(i): if $H$ is a quotient functor, we get the following commutative diagram as Remark \ref{ideal}
$$\xymatrix@C=0.5cm@R0.5cm{
\B \ar[rr]^H \ar[dr]_{\pi} && \mod \underline{\Omega \T}\\
&\B/\K \ar[ur]_-{\simeq}
}
$$
which implies $\B/\K$ is abelian.\\
(i)$\Rightarrow$(ii): it is enough to show that $\Omega \T$ is the subcategory of enough projetives in $\B/\K$. For convenience, here we denote the image of a morphism $f$ in $\B/\K$ by $\overline f$.

We claim that a morphism $\overline f:A\to B$ is an epimorphism if and only if we have the following commutative diagram
$$\xymatrix{
A \ar[r] \ar[d]_f & K \ar[d] \ar[r] &T  \ar@{=}[d] \ar@{-->}[r] &\\
B \ar[r]_b &C \ar[r] &T \ar@{-->}[r] &
}
$$
where $K\in \K$, $T\in T$ and $\overline b=0$.\\
We show ``if" part first.\\
If $\overline f:A\to B$ is an epimorphism, then it admits the following commutative diagram
$$\xymatrix{
A \ar[r] \ar[d]_f & K \ar[r] \ar[d] &T \ar@{=}[d] \ar@{-->}[r] &\\
B \ar[r]_b &C \ar[r] &T \ar@{-->}[r] &
}
$$
where $K\in \K$ and $T\in T$. Then we have $\overline {bf}=0$. Since $\overline f$ is epic, we have $\overline b=0$.\\
Now we show ``only if" part.\\
If we have the following commutative diagram
$$\xymatrix{
A \ar[r]^k \ar[d]_f & K \ar[d] \ar[r] &T  \ar@{=}[d] \ar@{-->}[r] &\\
B \ar[r]_b &C \ar[r] &T \ar@{-->}[r] &
}
$$
where $K\in \K$, $T\in T$ and $\overline b=0$. Let $d:B\to D$ be a morphism such that $\overline {df}=0$, then $df$ factors through an object in $\K$, it must factor through $k$. Hence we have the following commutative diagram
$$\xymatrix{
A \ar[r]^k \ar[d]_f &K \ar[d] \ar[ddr]\\
B \ar[r]^b \ar[drr]_d &C \ar@{.>}[dr]\\
&&D
}
$$
Hence $\overline d=0$, which implies $\overline f$ is epic.\\
Now let $\overline f:A\to B$ be an epimorphism and $t:\Omega T\to B$ be a morphism where $\Omega T_0$ admits an $\EE$-triangle $\xymatrix{\Omega T_0 \ar[r]^{p_0} &P_0 \ar[r] & T_0 \ar@{-->}[r] &}$ where $P_0\in \mathcal P$ and $T_0\in \T$. By the argument above, we have the following commutative diagram
$$\xymatrix{
A \ar[r]^k \ar[d]_f & K \ar[d]^c \ar[r] &T  \ar@{=}[d] \ar@{-->}[r] &\\
B \ar[r]_b &C \ar[r] &T \ar@{-->}[r] &
}
$$
where $K\in \K$, $T\in \T$ and $\overline b=0$. Then we get an $\EE$-triangle $\xymatrix{A \ar[r]^-{\svecv{f}{k}} &B\oplus K\ar[r]^-{\svech{b}{-c}} &C\ar@{-->}[r] &}$. Since $\overline {bt}=0$, then it factors through $p_0$, we have the following commutative diagram
$$\xymatrix{
&\Omega T_0 \ar[r]^{p_0} \ar[d]^-{\svecv{t}{0}} &P_0 \ar[r] \ar[d] & T_0 \ar@{-->}[r] &\\
A \ar[r]_-{\svecv{f}{k}} &B\oplus K\ar[r]_-{\svech{b}{-c}} &C\ar@{-->}[r] &
}
$$
By the similar argument as in Proposition \ref{abelian}, we get $\overline t$  factors through $\overline f$. By the previous argument in this proof, any indecomposable object $A'\notin \K$ admits the following commutative diagram
$$\xymatrix{
\Omega K \ar@{=}[r] \ar[d] &\Omega K \ar[d]\\
\Omega T \ar[d]_g \ar[r]^p &P \ar[r] \ar[d]^q &T
 \ar@{=}[d] \ar@{-->}[r]&\\
A' \ar[r]_h \ar@{-->}[d] &K \ar[r] \ar@{-->}[d] &T \ar@{-->}[r] &\\
&& &&
}
$$
which implies $\overline g:\Omega T\to A'$ is an epimorphism. Hence $\Omega \T$ is the subcategory of enough projective objects in $\B/\K$.
\end{proof}

Let $\widetilde{\K}$ be the subcategory of $\K$ that any object in it does not have direct summand in $\T$.
The following lemma is useful, for proof, see \cite{LZ}.

\begin{lem}\label{app}
Let $\D$ be a rigid subcategory, $\xymatrix{D \ar[r]^s &C \ar[r]^t &K \ar@{-->}[r] &}$ be an $\EE$-triangle where $K\in \widetilde{\K}$ and $t$ be a right $\mathcal T$-approximation (resp. $\mathcal P$, $\mathcal I$-approximation). Then
\begin{itemize}
\item[(a)] if $K$ is indecomposable, we have $D=C_0\oplus X$ where $C_0\in \mathcal T$ (resp. $\mathcal P$, $\mathcal I$) and $X$ is indecomposable and $X$ does not belong to $\T$. Moreover, if $t$ is right minimal, $D$ is indecomposable.
\item[(b)] $K$ is indecomposable if $D$ is indecomposable.
\end{itemize}
\end{lem}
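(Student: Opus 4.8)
The plan is to treat the three cases ($\D=\T$, $\mathcal P$, $\mathcal I$) uniformly, using only that $\D$ is rigid with $\D\subseteq\T$ together with the standing hypotheses of this section. In particular $K\in\widetilde{\K}\subseteq\K$ gives $\EE(\T,K)=0$ and, since ${^{\bot_1}}\T=\K$, also $\EE(K,\T)=0$, while $K$ has no nonzero summand in $\T$. Write the $\EE$-triangle as $D\xrightarrow{s}C\xrightarrow{t}K\overset{\delta}{\dashrightarrow}$ with $C\in\D$. The first step is a reduction to the right minimal case: since $\B$ is Hom-finite Krull--Schmidt, $t$ decomposes as $(t_0\ 0)\colon C_0\oplus C_1\to K$ with $t_0$ right minimal and $C_1\in\D$, and then the triangle is the direct sum of the minimal triangle $D_0\xrightarrow{s_0}C_0\xrightarrow{t_0}K$ and the split triangle $C_1\xrightarrow{1}C_1\to 0$; hence $D\cong D_0\oplus C_1$. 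This already yields the $\D$-summand $C_1$ of conclusion (a), so (a) reduces to the ``moreover'' assertion: if $t$ is right minimal then $D=D_0$ is indecomposable and does not lie in $\T$.

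For ``$D_0\notin\T$'': if $D_0\in\T$ then $\delta\in\EE(K,D_0)=0$ (as $\EE(K,\T)=0$), so the triangle splits and $K$ becomes a direct summand of $C_0\in\D\subseteq\T$, contradicting $K\in\widetilde{\K}$. For indecomposability, suppose $D_0=D_1\oplus D_2$ with both summands nonzero and write $\delta=(\delta_1,\delta_2)$ accordingly. If some $\delta_i=0$, then $\delta$ is the pushforward of the other component along a split monomorphism, so by \cite[Proposition 1.20]{LN} the triangle acquires $D_i\xrightarrow{1}D_i\to 0$ as a direct summand; consequently $C_0\cong C_0'\oplus D_i$ with $t_0$ vanishing on $D_i$, and the idempotent $\operatorname{diag}(1_{C_0'},0)$ fixes $t_0$ without being invertible, contradicting right minimality. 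Hence both $\delta_1,\delta_2$ are nonzero.

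The heart of the argument, and the step I expect to be the main obstacle, is to exclude the remaining case $D_0=D_1\oplus D_2$ with both $\delta_i\neq 0$. Here I would transfer indecomposability from $K$ to $D_0$ by lifting idempotents through the triangle: given a nontrivial idempotent $e\in\End_\B(D_0)$, complete it, via (ET3)$^{\mathrm{op}}$ and the long exact sequences obtained by applying $\Hom_\B(-,\D')$ and $\Hom_\B(K,-)$ to the triangle, to a morphism of $\EE$-triangles with a component $k\in\End_\B(K)$ satisfying $k^\ast\delta=e_\ast\delta$. Since $K$ is indecomposable, $\End_\B(K)$ is local, so $k$ is either invertible or lies in the radical. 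The approximation property of $t_0$, equivalently that the connecting map $\Hom_\B(\D',K)\to\EE(\D',D_0)$ vanishes, shows that $s$ is a left $\D$-approximation and $\EE(\D,D_0)=0$; combined with right minimality of $t_0$ this should force the $k$ arising from a nontrivial idempotent to be non-invertible, and hence force $e$ to be trivial. Controlling the non-uniqueness of the lift $k$ inside these long exact sequences is the delicate point; the outcome is that $\End_\B(D_0)$ has no nontrivial idempotents, i.e.\ $D_0$ is indecomposable, completing (a).

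For (b), assume $D$ is indecomposable. Applying the reduction above, $D\cong D_0\oplus C_1$ forces either $C_1=0$ (so $t$ is right minimal and $D=D_0$) or $D_0=0$; the latter would make $t_0$ an isomorphism and hence $K\cong C_0\in\D\subseteq\T$, impossible for $0\neq K\in\widetilde{\K}$. Thus $t$ is right minimal. If $K=K_1\oplus K_2$ nontrivially, then in the Krull--Schmidt setting the right minimal right $\D$-approximation of $K$ is the direct sum of the right minimal right $\D$-approximations $t_i\colon C^{(i)}\to K_i$, whence $D\cong D^{(1)}\oplus D^{(2)}$ with $D^{(i)}=\CoCone(t_i)$. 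Each $D^{(i)}$ is nonzero, since $D^{(i)}=0$ would give $K_i\cong C^{(i)}\in\D\subseteq\T$, contradicting $K_i\in\widetilde{\K}$ (summands of $K$ remain in $\widetilde{\K}$). This contradicts indecomposability of $D$, so $K$ is indecomposable.
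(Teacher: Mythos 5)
Most of your proposal is sound: the Krull--Schmidt reduction $D\cong D_0\oplus C_1$, the splitting argument showing $D_0\notin\T$ (via $\EE(K,\T)=0$), the exclusion of the case where some component $\delta_i$ vanishes, and all of part (b) are correct. (Note the paper gives no proof of this lemma, deferring to \cite{LZ}, so this is what one must compare against.) The genuine gap is exactly the step you flag as ``delicate'' and leave open: indecomposability of $D_0$, which is the whole content of part (a). Your plan --- lift a nontrivial idempotent $e\in\End_\B(D_0)$ to a morphism of $\EE$-triangles $(e,c,k)$ with $e_\ast\delta=k^\ast\delta$, then claim that approximation plus minimality ``should force $k$ to be non-invertible, and hence force $e$ to be trivial'' --- is not an argument in either half. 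First, invertibility of $k$ cannot by itself be contradictory: the idempotent $e=1$ lifts to $k=1$, so the correct dichotomy must be ``$k$ invertible $\Rightarrow e=1$'' and ``$k$ non-invertible $\Rightarrow e=0$'', not ``$e$ nontrivial $\Rightarrow k$ non-invertible''. Second, you give no mechanism whatsoever for deducing triviality of $e$ from non-invertibility of $k$; controlling the non-uniqueness of the lift is precisely what is missing.

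Both halves can be supplied, so the strategy is salvageable. If $k$ is invertible: from $kt_0=t_0c$ and the right approximation property write $k^{-1}t_0=t_0c'$, so $t_0=t_0c'c$; right minimality makes $c'c$, hence $c$, invertible (Hom-finiteness), and then the five lemma applied to the exact sequences $\Hom_\B(K,W)\to\Hom_\B(C_0,W)\to\Hom_\B(D_0,W)\to\EE(K,W)\to\EE(C_0,W)$ of \cite{NP}, whose induced self-map has components $k^\ast,c^\ast,e^\ast,k^\ast,c^\ast$, shows $e^\ast$ is injective; then $e^\ast(1-e)=(1-e)e=0$ forces $e=1$. If $k$ is not invertible: $1-k$ is invertible since $\End_\B(K)$ is local, additivity of $(-)_\ast$ and $(-)^\ast$ gives $(1-e)_\ast\delta=(1-k)^\ast\delta$, this morphism of extensions is realized by some triple $(1-e,c'',1-k)$, and the previous case applied to the idempotent $1-e$ yields $1-e=1$, i.e.\ $e=0$. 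Hence every idempotent of $\End_\B(D_0)$ is trivial; note this argument never uses your decomposition $\delta=(\delta_1,\delta_2)$, so your case division becomes superfluous. Alternatively, the standard route (presumably that of \cite{LZ}) avoids idempotents entirely: $\EE(K,\D)=0$ makes $s_0$ a left $\D$-approximation; writing $D_0=D_1\oplus D_2$ and taking minimal left $\D$-approximations $l_i\colon D_i\to E_i$, one gets $C_0\cong E_1\oplus E_2\oplus E'$ with $s_0\cong(l_1\oplus l_2)\oplus 0$, hence the $\EE$-triangle decomposes as a direct sum and $K\cong\Cone(l_1)\oplus\Cone(l_2)\oplus E'$; indecomposability of $K$, the fact that $K$ has no nonzero summand in $\T$, and right minimality of $t_0$ then force $E'=0$ and one of the $D_i$ to vanish.
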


We are considering when $\T$ becomes a cluster-tilting subcategory under certain conditions as in Theorem \ref{main1}. The following proposition is an answer.

\begin{prop}\label{CT}
The subcategory $\T$ is cluster tilting if and only if the following conditions are satisfied:
\begin{itemize}
\item[(a)] In the following commutative diagram
$$\xymatrix{
R_1 \ar@{=}[r] \ar[d]_f &R_1 \ar[d]\\
R_2 \ar[d]_g \ar[r] &P \ar[r] \ar[d] &T_2 \ar@{=}[d] \ar@{-->}[r]&\\
A \ar[r]_h \ar@{-->}[d] &T_1 \ar[r] \ar@{-->}[d] &T_2 \ar@{-->}[r] &\\
&& &&
}
$$
where $T_1,T_2\in \T$ and $P\in \mathcal P$, if $0\neq \underline f$ is right minimal, then $H(\underline h)=0$.
\item[(b*)] Any indecomposable object $A$ admits a commutative diagram
$$\xymatrix{
R_1 \ar@{=}[r] \ar[d]_f &R_1 \ar[d]\\
R_2 \ar[d]_g \ar[r] &P \ar[r] \ar[d] &T_2 \ar@{=}[d] \ar@{-->}[r]&\\
A \ar[r]_h \ar@{-->}[d] &T_1 \ar[r] \ar@{-->}[d] &T_2 \ar@{-->}[r] &\\
&& &&
}
$$
where $T_1,T_2\in \T$, $P\in \mathcal P$ and $H(\underline h)=0$.
\item[(c)] $\T\cap \Omega \T=\mathcal P$.
\end{itemize}
\end{prop}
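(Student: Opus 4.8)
The plan is to reduce Proposition~\ref{CT} to the single identity $\T=\K$ and then to handle the two inclusions separately. By Remark~\ref{useful}(d) we have $H(X)=0\Leftrightarrow\EE(\T,X)=0$, so $\K=\T^{\bot_1}$; combined with the standing hypothesis ${}^{\bot_1}\T=\K$ this gives $\EE(\T,X)=0\Leftrightarrow\EE(X,\T)=0$, and hence, under the running assumptions, $\T$ is cluster tilting exactly when $\T=\K$. The organising device I would use is the functorial isomorphism $H(X)\cong\EE(\T,X)$ coming from the connecting map of $\Omega T'\to P'\to T'$ (this is the content of Remark~\ref{useful}(d)): it is natural in $X$, so that $H(\underline h)$ corresponds to $\EE(\T,h)$, and it turns an $\EE$-triangle $A\xrightarrow{h}T_1\xrightarrow{k}T_2\dashrightarrow$ with $T_1,T_2\in\T$ into the exact piece $H(A)\xrightarrow{H(\underline h)}H(T_1)\xrightarrow{H(\underline k)}H(T_2)$ of the long exact sequence for $\EE(\T,-)$. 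Since $\operatorname{im}H(\underline h)=\ker H(\underline k)$, this yields the key translation
$$H(\underline h)=0\iff \EE(\T,k)\ \text{is injective.}$$

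For the implication $\T=\K\Rightarrow$ (a),(b*),(c), everything is forced by $H|_{\T}=0$. Rigidity $\EE(\T,\T)=0$ makes every $\EE$-triangle $X\to P\to T$ with $X,T\in\T$, $P\in\mathcal P$ split (its class lies in $\EE(T,X)=0$), so $X$ is a summand of $P$ and lies in $\mathcal P$; this is exactly (c). For (a) and (b*) the morphism $h$ always has target $T_1\in\T=\K$, so $H(\underline h)\colon H(A)\to H(T_1)=0$ vanishes automatically; the only genuine point is the \emph{existence} of the diagram in (b*), which I would obtain from a left $\T$-resolution $A\to T^0\to T^1$ (available because $\T$ is cluster tilting) by completing the $3\times3$ square through the dual of Lemma~\ref{tec} via (ET4).

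For the converse I would prove $\T=\K$ as two inclusions. The inclusion $\K\subseteq\T$ is quick: as $\K$ and $\T$ are closed under summands it suffices to treat an indecomposable $A\in\K$; condition (b*) provides an $\EE$-triangle $A\xrightarrow{h}T_1\to T_2\dashrightarrow\delta$ with $T_1,T_2\in\T$, and $A\in\K=\T^{\bot_1}$ forces $\delta\in\EE(T_2,A)=0$, so the triangle splits and $A$ is a summand of $T_1\in\T$. The reverse inclusion $\T\subseteq\K$, i.e. the rigidity $\EE(\T,\T)=0$, is the heart of the matter and I would argue it by contradiction. If $\T$ is not rigid there is an indecomposable $T_1\in\T\setminus\K$, necessarily with $T_1\notin\mathcal P$ (note $\mathcal P\subseteq\K$ since $\EE(\T,\mathcal P)=0$). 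Taking an injective hull $k\colon T_1\to I$ with $I\in\mathcal I\subseteq\T$ we have $\EE(\T,I)=0$, so $\EE(\T,k)$ is not injective (nonzero source $H(T_1)$, zero target); forming the diagram of the shape in (a) with $f=\Omega k$ and applying the displayed equivalence gives $H(\underline h)\neq0$. By condition (a) this is impossible once $\underline f=\underline{\Omega k}$ is right minimal and nonzero. It therefore remains to reduce to that case: split off (Krull--Schmidt) the largest summand of $\Omega T_1$ killed by $\underline{\Omega k}$, pass to the right-minimal part $\Omega W$ (whose summands lie in $\Omega\T$ by Lemma~\ref{summand}), and control minimality through Lemmas~\ref{min} and~\ref{min2}; in the degenerate subcase where the right-minimal part vanishes, $T_1$ becomes a summand of some $\Omega S_2\in\Omega\T$, hence lies in $\T\cap\Omega\T=\mathcal P$ by (c), contradicting $T_1\notin\mathcal P$. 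With $\underline{\Omega k}$ arranged right minimal and nonzero, (a) yields $H(\underline h)=0$, the desired contradiction, so $\T$ is rigid and $\T=\K$.

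The main obstacle is precisely this last reduction to a right-minimal, nonzero syzygy map. One must understand how the syzygy operation interacts with the direct-sum decomposition of $\Omega T_1$ (its summands lie in $\Omega\T$ but need not be syzygies of summands of the indecomposable $T_1$), and verify that the surviving right-minimal part still detects the non-vanishing of $H(T_1)$, so that the translation $H(\underline h)=0\Leftrightarrow\EE(\T,k)\ \text{injective}$ can legitimately be invoked against condition (a). This is where Lemmas~\ref{min}, \ref{min2}, \ref{summand} and condition (c) have to be combined with care; the remaining bookkeeping—matching the rows produced by Lemma~\ref{tec} with the triangle $A\to T_1\to I$—is routine.
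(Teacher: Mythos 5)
Your reduction to the identity $\T=\K$, your proof of $\K\subseteq\T$ (splitting $\delta\in\EE(T_2,A)=0$ directly), the forward implication, and the translation $H(\underline h)=0\Leftrightarrow\EE(\T,k)$ injective (legitimate, since $\EE(\T,\mathcal P)=0$ gives a natural isomorphism $\Hom_{\underline \B}(\Omega T',-)\cong\EE(T',-)$) are all sound. The genuine gap is exactly the step you flag as ``the main obstacle'', and it defeats both branches of your rigidity argument; it is not bookkeeping. In the non-degenerate branch, once you split $\Omega T_1=W\oplus W'$ so that $\underline{\Omega k}|_{W}$ is right minimal, the source of your test map is $W$, which by Lemma \ref{summand} is $\Omega S$ for some $S\in\T$; but $S$ need not be stably isomorphic to $T_1$, since $\Omega$ is not an equivalence in this setting ($\B$ is not Frobenius) and, as you note yourself, summands of $\Omega T_1$ need not be syzygies of summands of $T_1$. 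The diagram that Lemma \ref{tec} produces from $\underline{\Omega k}|_{W}$ therefore has third row $B\to S'\to I$ with $S'\simeq S$ in $\underline\B$, so condition (a) plus your translation yields only $\EE(\T,S)=0$; nothing transfers back to $T_1$, and the contradiction with $\EE(\T,T_1)\neq0$ never materializes. In the degenerate branch, the claim that $T_1$ ``becomes a summand of some $\Omega S_2\in\Omega\T$'' is unjustified: from $\underline{\Omega k}=0$ one gets (Remark \ref{useful}(a) together with $\EE(T_1,\mathcal P)=0$) that $k$ factors through the deflation $P_I\to I$, and then the (ET4)$^{\op}$ cocone $N$ of the composite $P_I\to I\to\Sigma T_1$ splits as $\Omega I\oplus T_1$; this exhibits $T_1$ as a summand of an object of $\CoCone(\mathcal P,\Sigma T_1)$, \emph{not} of $\Omega\T$, because $\Sigma T_1$ need not lie in $\T$, so condition (c) cannot be invoked.

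The paper's rigidity proof is engineered precisely to avoid these two problems, and the difference is instructive. It never uses the injective hull of $T_1$: it starts from the (b*)-diagram of the indecomposable $A\in\T$ itself, whose third row $A\xrightarrow{h}T_1\to T_2$ has \emph{both} end terms in $\T$, and it applies condition (a) not to $\Omega k$ but to the syzygy map $h_1=\Omega h\colon \Omega A\to R_1'$, built from the proposition recalled from [LN] and (ET4)$^{\op}$. There, right minimality is free whenever $\underline{h_1}\neq 0$, because $\Omega A$ is indecomposable in $\underline\B$ (Lemma \ref{app}); no splitting of the source is needed, so the tested object remains $A$: condition (a) gives $H(\underline g)=0$, and Lemma \ref{exact} applied to the (b*)-diagram, in which also $H(\underline h)=0$, forces $H(A)=0$. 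In the degenerate case $\underline{h_1}=0$, the map $h$ factors through a projective and $A$ becomes a summand of $R_2\oplus P\oplus P_A$ with $R_2=\Omega T_2\in\Omega\T$, so Lemma \ref{summand} and condition (c) give $A\in\mathcal P$, whence $H(A)=0$. To salvage your route you would need an analogue of Lemma \ref{app} guaranteeing stable indecomposability of the minimal syzygy $\Omega T_1$, and you would need to replace the cosyzygy triangle $T_1\to I\to\Sigma T_1$ by a triangle whose cone lies in $\T$; doing both essentially reconstructs the paper's argument.
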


\begin{proof}
Suppose (a), (b*), (c) are satisfied, we show $\T$ is cluster tilting.

We first prove $\T$ is rigid.

Let $A$ be an indecomposable object in $\T$, by (b*), it admits a commutative diagram
$$\xymatrix{
R_1 \ar@{=}[r] \ar[d]_f &R_1 \ar[d]\\
R_2 \ar[d]_g \ar[r]^q &P \ar[r] \ar[d]^p &T_2 \ar@{=}[d] \ar@{-->}[r]&\\
A \ar[r]_h \ar@{-->}[d] &T_1 \ar[r] \ar@{-->}[d] &T_2 \ar@{-->}[r] &\\
&& &&
}
$$
where $T_1,T_2\in \T$, $P\in \mathcal P$ and $\Hom_{\underline \B}(\Omega \T,\underline h)=0$.  Since A admits an $\EE$-triangle $\xymatrix@C=0.7cm{\Omega A \ar[r] &P_A \ar[r]^{p_A}  &A \ar@{-->}[r]&}$ where $P_A\in \mathcal P$. Then we have the following commutative diagram
$$\xymatrix{
\Omega A \ar@{=}[r] \ar[d]_{h_1} &\Omega A \ar[d] \\
R_1' \ar[r] \ar[d]_{f'} &P\oplus P_A \ar[d]^{\left(\begin{smallmatrix}
1_P&0\\
0&p_A
\end{smallmatrix}\right)} \ar[r] &T_1 \ar@{=}[d] \ar@{-->}[r]^{\sigma} &\\
R_2 \ar[r]_-{\svecv{-q}{g}} \ar@{-->}[d] &P\oplus A \ar[r]_-{\svech{p}{h}} \ar@{-->}[d] &T_1 \ar@{-->}[r]^{\delta} &\\
&& &&
}
$$
Moreover, we also have the following commutative diagram according to (ET4)${^{\op}}$ in \cite{NP}.
$$\xymatrix{
\Omega A \ar[r] \ar[d]_{h_1} &P\oplus P_A \ar[r] \ar[d] &A\ar[d]^h\ar@{-->}[r]^{\sigma} &\\
R_1' \ar[r] &P\oplus P_A \ar[r] &T^1\ar@{-->}[r]^{\delta} &
}
$$
By Lemma \ref{app}, $\Omega A=R_A\oplus P'$ where $R_A$ is an indecomposable object and $P'\in \mathcal P$. If $\underline {h_1}=0$, $h$ factors through $P\oplus P_A$, then we have the following diagram
$$\xymatrix{
&A \ar[r] \ar@{.>}[dl] \ar[d]^{\svecv{0}{1_A
}} &P\oplus P_A \ar[d] \ar@{.>}[dl] \\
R_2 \ar[r] &P\oplus A \ar[r]_-{\svech{p}{h}} &T_1 \ar@{-->}[r] &
}
$$
which implies $A$ is a direct summand of $R_2\oplus P\oplus P_A$, then A  lies in $\Omega \T$ by Lemma \ref{summand}, by (c) $A\in \mathcal P$, hence $H(A)=0$. Now we can assume $\underline {h_1}\neq 0$, it is right minimal since $\Omega A=R_A$ is indecomposable in $\underline B$. By (a), we have $H(\underline {g})=0$. Hence by Lemma \ref{exact} we have an exact sequence $H(R_2) \xrightarrow{H(\underline g)=0} H(A) \xrightarrow{H(\underline h)=0} H(T_1)$ which implies $H(A)=0$, hence $\T$ is rigid.

Now let $A$ be an indecomposable object such that $H(A)=0$, we show that $A\in \T$.\\
By (b*), $A$ admits a commutative diagram
$$\xymatrix{
R_1 \ar@{=}[r] \ar[d]_f &R_1 \ar[d]\\
R_2 \ar[d]_g \ar[r]^r &P \ar[r] \ar[d] &T_2 \ar@{=}[d] \ar@{-->}[r]&\\
A \ar[r]_h \ar@{-->}[d] &T_1 \ar[r]_t \ar@{-->}[d] &T_2 \ar@{-->}[r] &\\
&& &&
}
$$
where $T_1,T_2\in \T$, $P\in \mathcal P$. By assumption we have $H(\underline g)=0\Rightarrow \Hom_{\underline \B}(R_2,A)=0\Rightarrow \underline g=0$, hence it factors through  $r$. Then $t$ is a split epimorphism and $A$ is a s direct summand of $T^1$, hence $A\in \T$.

Now by definition $\T$ is a cluster tilting subcategory.

If $\T$ is a cluster tilting subcategory, (a), (b*), (c) are satisfied by the definition of cluster tilting.
\end{proof}

Let $\h$ be the subcategory of direct sums of indecomposable objects $X\notin \K$ and $Y\in \T$, then an indecomposable object in $\B$ belongs to either $\h$ or $\widetilde{\K}$.

\begin{lem}\label{stable}
If $\T$ satisfies condition (b) in Theorem \ref{main1}, then any object $X$ in $\h$ admits a commutative diagram.
$$\xymatrix{
R_1 \ar@{=}[r] \ar[d]_f &R_1 \ar[d]\\
R_2 \ar[d]_g \ar[r] &P \ar[r] \ar[d]^p &T_2 \ar@{=}[d] \ar@{-->}[r]& \ar@{}[d]^{(\star)}\\
X \ar[r]_h \ar@{-->}[d] &T_1 \ar[r] \ar@{-->}[d] &T_2 \ar@{-->}[r] &\\
&& &&
}
$$
where $T_1,T_2\in \T$, $P\in \mathcal P$.
\end{lem}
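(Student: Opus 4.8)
The plan is to reduce to indecomposable summands, treat each one according to whether it lies in $\K$, and then reassemble everything by taking a direct sum of the resulting diagrams.

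Since $\B$ is Krull--Schmidt, I would first write $X=\bigoplus_{i=1}^{n}X_i$ with each $X_i$ indecomposable. As $X\in\h$ and every indecomposable object of $\B$ belongs to $\h$ or to $\widetilde{\K}$, each $X_i$ lies in $\h$; by the description of $\h$, each $X_i$ is therefore either outside $\K$ or lies in $\T$. It then suffices to produce a $(\star)$-diagram for every indecomposable $X_i$ and to take the direct sum, so the argument splits into two cases. If $X_i\notin\K$, then condition (b) of Theorem \ref{main1} applies verbatim and supplies a $(\star)$-diagram for $X_i$ with $T_1,T_2\in\T$, $P\in\mathcal P$ and, in addition, $H(\underline{h_i})=0$.

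If instead $X_i\in\K$, then---being an indecomposable summand of an object of $\h$---it must lie in $\T$. Here I would build the diagram by hand from the defining $\EE$-triangle $\Omega X_i\xrightarrow{a}P_{X_i}\xrightarrow{b}X_i\dashrightarrow$ with $P_{X_i}\in\mathcal P$, by setting $R_1=\Omega X_i$, $R_2=P=P_{X_i}$, $A=T_1=X_i$, $T_2=0$, $f=a$, $g=b$, $h=\mathrm{id}_{X_i}$, and taking the middle row to be the split triangle $P_{X_i}\xrightarrow{\mathrm{id}}P_{X_i}\to 0$ and the bottom row $X_i\xrightarrow{\mathrm{id}}X_i\to 0$. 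One checks at once that both the left and the middle columns realize $\Omega X_i\to P_{X_i}\to X_i$, that the two lower rows are split $\EE$-triangles, and that every square commutes, so this is a legitimate instance of $(\star)$ with $T_1,T_2\in\T$ and $P\in\mathcal P$; moreover $X_i\in\K$ forces $H(X_i)=0$, whence again $H(\underline{h_i})=0$.

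Finally I would assemble the pieces: the summand-wise direct sum of the $n$ diagrams is a $3\times 3$ grid whose rows and columns are the direct sums of the corresponding $\EE$-triangles, and by the additivity of the realization $\mathfrak{s}$ (ET2) these direct sums are again $\EE$-triangles, while commutativity is preserved summand-wise. Since $\T$ and $\mathcal P$ are closed under finite direct sums, the resulting $T_1=\bigoplus_i T_1^{(i)}$ and $T_2=\bigoplus_i T_2^{(i)}$ lie in $\T$ and $P=\bigoplus_i P^{(i)}\in\mathcal P$, which yields the desired $(\star)$-diagram for $X$ (with $h=\bigoplus_i h_i$, so that $H(\underline h)=0$ as well). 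The only non-formal point is the observation that condition (b) misses exactly the indecomposables of $\h$ lying in $\K$, which are forced into $\T$ and are covered by the degenerate ($T_2=0$) diagram; verifying that this degenerate configuration is an honest instance of $(\star)$ and that the shape survives direct sums is the main bookkeeping, but it is routine once the additivity of $\mathfrak{s}$ is invoked.
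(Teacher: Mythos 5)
Your proof is correct and follows essentially the same route as the paper: split an indecomposable $X\in\h$ into the two cases $X\notin\K$ (where condition (b) applies directly) and $X\in\K\cap\T$ (where the degenerate diagram with $T_2=0$ built from the projective presentation $\Omega X\to P_X\to X$ does the job, with $H(\underline{1_X})=0$ since $H(X)=0$). The only difference is that you spell out the reassembly of general objects of $\h$ via direct sums and the additivity of $\mathfrak{s}$, which the paper leaves implicit; that is a harmless, correct elaboration.
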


\begin{proof}
Let $X\in \h$ be an indecomposable object. If $X\notin \K$, by (b) we have the required diagram. Otherwise let $X\in \K\cap \T$, then we have the following diagram
$$\xymatrix{
\Omega X \ar@{=}[r] \ar[d] &\Omega X \ar[d]\\
P_X \ar[d] \ar@{=}[r] &P_X \ar[r] \ar[d] &0 \ar@{=}[d] \ar@{-->}[r]&\\
X \ar@{=}[r] \ar@{-->}[d] &X \ar[r] \ar@{-->}[d] &0 \ar@{-->}[r] &\\
&& &&
}
$$
where $H(\underline {1_X})=0$ since $H(X)=0$. Hence any object $X$ in $\h$ admits a commutative diagram as ($\star$).
\end{proof}

By this lemma and the proof of Proposition \ref{CT}, we have the following corollary.
\medskip

\begin{cor}\label{rigid}
If $\T$ satisfies condition (b) in Theorem \ref{main1} and $\T\cap \Omega \T=\mathcal P$, then $\T$ is rigid.
\end{cor}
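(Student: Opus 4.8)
The plan is to show that every indecomposable object $A\in\T$ lies in $\K$. Since $\K=\{Y\mid\EE(\T,Y)=0\}$ by Remark \ref{useful}(d), the inclusion $\T\subseteq\K$ is exactly the statement $\EE(\T,\T)=0$, i.e. rigidity. If $A\in\K$ there is nothing to prove, so I would concentrate on an indecomposable $A\in\T$ with $A\notin\K$ (equivalently $H(A)\neq0$) and derive a contradiction. The first step is to observe that $\T\subseteq\h$, so Lemma \ref{stable} supplies the diagram $(\star)$ for $A$, with $T_1,T_2\in\T$, $P\in\mathcal P$ and $H(\underline h)=0$; reading the columns $R_1\xrightarrow{f}R_2\xrightarrow{g}A\dashrightarrow$ and $R_1\to P\to T_1$, together with the row $R_2\to P\to T_2$, exhibits $R_1,R_2\in\Omega\T$.

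Next I would import the construction from the rigidity part of Proposition \ref{CT}. Using the $\EE$-triangle $\Omega A\to P_A\to A\dashrightarrow$ together with (ET4)$^{\mathrm{op}}$, one builds the auxiliary $3\times 3$ diagram whose left column is $\Omega A\xrightarrow{h_1}R_1'\xrightarrow{f'}R_2\dashrightarrow$ with $R_1'=\Omega T_1$, and whose lower row is the combined triangle $R_2\xrightarrow{\svecv{-q}{g}}P\oplus A\xrightarrow{\svech{p}{h}}T_1\dashrightarrow$. The point I would stress is that $P\oplus A\in\T$, because $\mathcal P\subset\T$ and $\T$ is closed under direct sums, so this auxiliary diagram has exactly the shape occurring in condition (a) of Theorem \ref{main1}. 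By Lemma \ref{app} applied to $\Omega A\to P_A\to A$, one gets $\Omega A=R_A\oplus P'$ with $R_A$ indecomposable and $P'\in\mathcal P$, hence $\Omega A$ is indecomposable in $\underline\B$.

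I would then split on $\underline{h_1}$. If $\underline{h_1}=0$, then $h$ factors through $P\oplus P_A\in\mathcal P$ and $A$ becomes a direct summand of $R_2\oplus P\oplus P_A$; by Lemma \ref{summand} this yields $A\in\Omega\T$, whence $A\in\T\cap\Omega\T=\mathcal P\subseteq\K$, contradicting $A\notin\K$. If $\underline{h_1}\neq0$, then it is right minimal (its source is indecomposable in $\underline\B$), and since the auxiliary diagram is an instance of the situation in (a), the minimality analysis carried out there forces $H$ of its lower-left map to vanish; as $H$ kills the projective summand $P$, this is the same as $H(\underline g)=0$. Feeding $H(\underline g)=0$ into the exact sequence $H(R_1)\xrightarrow{H(\underline f)}H(R_2)\xrightarrow{H(\underline g)}H(A)\xrightarrow{H(\underline h)}H(T_1)$ of Lemma \ref{exact}, together with $H(\underline h)=0$, forces $H(A)=0$, again a contradiction. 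Since both cases are impossible, no indecomposable $A\in\T$ can lie outside $\K$, so $\T\subseteq\K$ and $\T$ is rigid.

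The main obstacle is precisely the right-minimal case $\underline{h_1}\neq0$. The case $\underline{h_1}=0$ is disposed of cleanly by the hypothesis $\T\cap\Omega\T=\mathcal P$, but the other case is exactly where the delicate minimality argument behind condition (a) is needed, and one must verify with care that the auxiliary $3\times3$ diagram built from the pullback/pushout and (ET4)$^{\mathrm{op}}$ genuinely realizes $f=\Omega t$ and the combined triangle $R_2\to P\oplus A\to T_1$, and that its middle term $P\oplus A$ really sits in $\T$ so that the diagram qualifies as an instance of (a). That bookkeeping, rather than any new idea, is the part I would handle last.
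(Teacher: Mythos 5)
Your proposal is correct and is essentially the paper's own proof, which is given only as ``by Lemma \ref{stable} and the proof of Proposition \ref{CT}'': you reconstruct exactly that argument --- Lemma \ref{stable} applied to indecomposables of $\T\subseteq\h$, the (ET4)$^{\mathrm{op}}$ auxiliary diagram with middle term $P\oplus A\in\T$, Lemma \ref{app} to make $\Omega A$ indecomposable in $\underline\B$, the case split on $\underline{h_1}$, and the hypothesis $\T\cap\Omega\T=\mathcal P$ together with Lemma \ref{summand} in the case $\underline{h_1}=0$. Your reliance on condition (a) of Theorem \ref{main1} in the case $\underline{h_1}\neq 0$ (to get $H(\underline g)=0$ and hence $H(A)=0$ from Lemma \ref{exact}) is precisely what the paper's proof of Proposition \ref{CT} does at the same point, so even though (a) is not literally listed among the corollary's hypotheses, that imprecision is inherited from the paper and is not a defect of your argument.
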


Let $\widehat{\K}$ be the subcategory of objects which are direct sums of objects in $\widetilde{\K}$ and $\mathcal P\cap \mathcal I$.

\begin{lem}\label{zero}
If $H$ is a quotient functor, then $\Hom_{\underline B}(\h,\widetilde{\K})=0$.
\end{lem}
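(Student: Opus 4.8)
The plan is to reduce to indecomposable objects and then to exploit the two $\EE$-triangles that Lemma \ref{stable} attaches to any object of $\h$. Since $\Hom_{\uB}(-,-)$ is additive and every indecomposable of $\B$ lies in exactly one of $\h$ and $\widetilde{\K}$, it suffices to fix indecomposables $X\in\h$ and $K\in\widetilde{\K}$ and to show that every $a\colon X\to K$ factors through $\mathcal P$. By Lemma \ref{stable} I may take an $\EE$-triangle $X\xrightarrow{h}T_1\to T_2\dashrightarrow\delta$ with $T_1,T_2\in\T$ and $H(\underline h)=0$, together with the first column $R_1\xrightarrow{f}R_2\xrightarrow{g}X\dashrightarrow\epsilon$ of the diagram $(\star)$, in which $R_1,R_2\in\Omega\T$. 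Applying $H$ to $(\star)$ and using Lemma \ref{exact} gives an exact sequence $H(R_1)\to H(R_2)\xrightarrow{H(\underline g)}H(X)\xrightarrow{H(\underline h)=0}H(T_1)$, so $H(\underline g)$ is an epimorphism; by Yoneda this says precisely that $g$ is a right $\Omega\T$-approximation of $X$.

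Two consequences come for free. First, because $R_2\in\Omega\T$ and $K\in\K$ we have $\Hom_{\uB}(R_2,K)=0$, hence $\underline{ag}=0$. Secondly, $K\in\K$ gives $\EE(\T,K)=0$, in particular $\EE(T_2,K)=0$; feeding this into the long exact sequence of the bottom triangle forces $a_\ast\delta=0$, so $a=bh$ for some $b\colon T_1\to K$. Thus $a$ is ``tested to zero'' by the approximation $g$, and the whole problem becomes: upgrade $\underline{ag}=0$ to $\underline a=0$, i.e. show that $g$ is a stable epimorphism when paired against $K$.

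I would carry this out as follows. As $\underline{ag}=0$, the composite $ag$ factors through a projective, and since the deflation $p$ of an $\EE$-triangle $\Omega K\xrightarrow{i}P_K\xrightarrow{p}K\dashrightarrow\eta$ absorbs every map from a projective into $K$, we get $ag=pm$ for some $m\colon R_2\to P_K$. This yields a morphism of $\EE$-triangles from $R_1\xrightarrow{f}R_2\xrightarrow{g}X$ to $\Omega K\xrightarrow{i}P_K\xrightarrow{p}K$ with right-hand vertical $a$ and middle vertical $m$; let $n\colon R_1\to\Omega K$ be its left-hand vertical. By Remark \ref{useful}(a), $\underline a=0$ (that is, $a$ factors through $p$, hence through $\mathcal P$) if and only if $n$ factors through $f$. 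Using $f_\ast\epsilon=0$ for the connecting extension $\epsilon\in\EE(X,R_1)$ one checks $i_\ast(n_\ast\epsilon)=(mf)_\ast\epsilon=0$, so $n_\ast\epsilon$ lies in the image of $\Hom(X,K)\to\EE(X,\Omega K)$; this is exactly where the purely homological bookkeeping closes up on itself.

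The hard part will therefore be the final, non-homological input: the exact-sequence manipulations alone only return the tautology $n_\ast\epsilon=a^\ast\eta$, so one must genuinely use that $K\in\widetilde{\K}$ has no direct summand in $\T$, together with $\EE(\T,K)=\EE(K,\T)=0$. My intended route is to first replace $f$ (equivalently $g$) by a right-minimal representative, as in the proof of Theorem \ref{main1} via Lemma \ref{min} and Lemma \ref{min2}, and then to analyse a right-minimal right $\mathcal P$-approximation of the indecomposable $K$ through Lemma \ref{app}: a nonzero class $\underline a$ should force this approximation to split off a summand lying in $\T$, contradicting $K\in\widetilde{\K}$. Once $\underline a=0$ is established for indecomposables, additivity gives $\Hom_{\uB}(\h,\widetilde{\K})=0$. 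I expect this minimality-plus-indecomposability step, rather than any of the exact-sequence steps, to be the crux.
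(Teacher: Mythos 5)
Your reduction to indecomposables, the factorization $a=bh$ through $T_1$ (using $\EE(T_2,K)=0$), and the observation $\underline{ag}=0$ all coincide with the opening steps of the paper's proof, and they are correct. But your proposal stops exactly where the real work begins: you concede that the homological bookkeeping ``closes up on itself'' and that upgrading $\underline{ag}=0$ to $\underline a=0$ is the crux, yet that step is only described as an ``intended route'' and an ``expectation''. Since it is never carried out, this is a genuine gap, not a complete proof.

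Moreover, the route you sketch would not suffice even if executed. The paper's argument, assuming $\underline a\neq 0$ (hence $\underline b\neq 0$ for $b\colon T_1\to K$), passes to the syzygy of $K$, not back to the approximation of $X$: from $\Hom_{\uB}(\T,K)\neq 0$ it deduces $\EE(\T,\Omega K)\neq 0$, so the minimal (hence, by Lemma \ref{app}, indecomposable up to a projective summand) syzygy of $K$ lies outside $\K$; it then applies condition (b) of Theorem \ref{main1} to that syzygy, obtaining a diagram $(\star)$ with a map $h'\colon\Omega K\to T_1'$, and compares it with the projective presentation of $K$ to get an induced map $h''\colon K\to\Sigma T_1'$. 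The contradiction then splits into two cases. If $\overline{h''}=0$, unwinding factorizations via Remark \ref{useful}(a) splits the first column of the diagram, putting the syzygy into $\Omega\T$ and thus (by comparing cosyzygies) forcing $K\in\T$ --- this is the ``split off a $\T$-summand'' contradiction you anticipate. But if $\overline{h''}\neq 0$, no splitting occurs at all; instead $\Hom_{\oB}(K,\Sigma\T)\neq 0$, hence $\EE(K,\T)\neq 0$, and the contradiction comes from the Section 4 standing assumption ${}^{\bot_1}\T=\K$ (equivalently, Remark \ref{useful}(d) linking the projective side $\Omega\T$, $H$ to the injective side $\Sigma\T$), which yields $K\notin\K$. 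Your plan invokes minimality, indecomposability and Lemma \ref{app}, but never this assumption, and without it the second case cannot be excluded; so the sketched strategy covers at most half of the required argument.
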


\begin{proof}
Let $x:X\to Y$ be a morphism where $X\in \h$ and $Y\in \widetilde{\K}$ are indecomposable objects. By Theorem \ref{main1}, $X$ admits a commutative diagram
$$\xymatrix{
R_1 \ar@{=}[r] \ar[d]_f &R_1 \ar[d]\\
R_2 \ar[d]_g \ar[r] &P \ar[r] \ar[d]^p &T_2 \ar@{=}[d] \ar@{-->}[r]& \ar@{}[d]^{(\star)}\\
X \ar[r]_h \ar@{-->}[d] &T_1 \ar[r] \ar@{-->}[d] &T_2 \ar@{-->}[r] &\\
&& &&
}
$$
where $T_1,T_2\in \T$, $P\in \mathcal P$. Since $\EE(\T,\K)=0$, we have $x:X\xrightarrow{h} T_1\xrightarrow{t_1} Y$. If $\underline x\neq 0$, we have $\Omega Y\notin \K/\mathcal P$ since $\underline {t_1}\neq 0$. By Lemma \ref{app}, $\Omega Y=R_Y\oplus P'$ where $R_Y\in \Omega \widehat{\K}$ is indecomposable and $P'\in \mathcal P$. By Theorem \ref{main1}, $\Omega Y$ admits a commutative diagram
$$\xymatrix{
R_1' \ar@{=}[r] \ar[d]_{f'} &R_1' \ar[d]\\
R_2' \ar[d]_{g'}\ar[r] &P' \ar[r] \ar[d]^r &{T_2}' \ar@{=}[d] \ar@{-->}[r]& \\
\Omega Y \ar[r]_{h'} \ar@{-->}[d] &{T_1}' \ar[r] \ar@{-->}[d] &{T_2}' \ar@{-->}[r] &\\
&& &&
}
$$
We have the following commutative diagram
$$\xymatrix{
\Omega Y \ar[r] \ar[d]_{h'} &P_Y \ar[r] \ar[d] &Y \ar[d]^{h''} \ar@{-->}[r] &\\
{T_1}' \ar[r] &I \ar[r]_i &\Sigma {T_1}' \ar@{-->}[r] &
}
$$
where $I\in \mathcal I$, If $\overline {h''}=0$, then it factors through $i$, hence $h'$ factors through $P_Y$ and then factors through $r$, which implies $R_Y$ is a direct summand of $R_2'$, hence $R_Y\in \Omega \T$, a contradiction. If $\overline {h''}\neq 0$, we have $H(Y)\neq 0$, then $Y\notin \K$, a contradiction. Hence $\underline x=0$.
\end{proof}

From now on, we assume $\B$ satisfies condition (WIC) (\cite[Condition 5.8]{NP}):

\begin{itemize}
\item If we have a deflation $h: A\xrightarrow{f} B\xrightarrow{g} C$, then $g$ is also a deflation.
\item If we have an inflation $h: A\xrightarrow{f} B\xrightarrow{g} C$, then $f$ is also an inflation.
\end{itemize}

Note that this condition holds on triangulated categories and exact categories. If $\T$ is extension closed, then under this condition we can get cotorsion pairs $(\T,\K)$ and $(\K,\T)$.

By this condition, we can always get right minimal deflations and left minimal inflations.

Denote $\Hom_{\overline \B}(-,\Sigma \T)$ by $H^{\op}$, we have the following lemma.

\begin{lem}\label{syzygy}
If $H$ and $H^{\op}$ are quotient functors, then any indecomposable object $K\in \widetilde{\K}$ admits the following $\EE$-triangles
\begin{itemize}
\item[(a)] $\xymatrix{K' \ar[r] &P \ar[r] &K \ar@{-->}[r] &}$,
\item[(b)] $\xymatrix{K \ar[r] &I \ar[r] &K'' \ar@{-->}[r] &}$
\end{itemize}
where $P,I \in \mathcal P \cap \mathcal I$ and $K',K''\in \widetilde{\K}$ are indecomposables.
\end{lem}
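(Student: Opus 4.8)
The plan is to establish the $\EE$-triangle in (a) and then obtain (b) by the dual argument, replacing $\Omega,\mathcal P,H,$ and Lemma \ref{zero} by $\Sigma,\mathcal I,H^{\op}$ and its dual, all under (WIC). For (a) the guiding idea is that the \emph{projective cover} of $K$ already carries all the required data, so the work splits into two goals: producing an indecomposable syzygy lying in $\widetilde{\K}$, and proving that the projective in the middle is in fact also injective.

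First I would use (WIC) to pick a right minimal deflation $P_0\to K$ with $P_0\in\mathcal P$ and form $\Omega K\xrightarrow{p}P_0\xrightarrow{\pi}K\dashrightarrow\delta$. Any deflation with projective middle term is a right $\mathcal P$-approximation; moreover every $T\to K$ with $T\in\T\subseteq\h$ factors through $\mathcal P$ by Lemma \ref{zero} (as $K\in\widetilde{\K}$), so $\pi$ is even a right $\T$-approximation. Applying $\Hom_{\B}(\T,-)$ to the triangle and using $\EE(\T,\mathcal P)=0$ then gives $\EE(\T,\Omega K)=0$, i.e. $\Omega K\in\K$. Lemma \ref{app} (with $\D=\mathcal P$, $\pi$ right minimal) forces $\Omega K$ to be indecomposable, and $\Omega K\in\mathcal P$ is impossible: otherwise $\delta\in\EE(K,\Omega K)\subseteq\EE(K,\T)=0$ would split the triangle, exhibiting $K$ as a projective summand and contradicting $K\in\widetilde{\K}$. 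Hence $\Omega K\in\widetilde{\K}$ is indecomposable and can serve as $K'$.

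The main obstacle is upgrading $P_0\in\mathcal P$ to $P_0\in\mathcal P\cap\mathcal I$. Here I would run the dual syzygy construction on $\Omega K$: take a left minimal inflation $\iota\colon\Omega K\to I_\Omega$ with $I_\Omega\in\mathcal I$, forming $\Omega K\xrightarrow{\iota}I_\Omega\to\Sigma\Omega K\dashrightarrow$, and show (dually to the previous paragraph, via the dual of Lemma \ref{zero}) that $\Sigma\Omega K\in\K$. Now comes the key ping-pong: since $\EE(K,I_\Omega)=0$ the map $\iota$ factors through $p$, say $\iota=bp$; and since $\Sigma\Omega K\in\K$ gives $\EE(\Sigma\Omega K,P_0)\subseteq\EE(\Sigma\Omega K,\T)=0$, the map $p$ factors through $\iota$, say $p=c\iota$. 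Then $bc\,\iota=\iota$ with $\iota$ left minimal forces $bc$ to be an isomorphism, so $I_\Omega$ is a direct summand of $P_0$. Writing $P_0=I'_\Omega\oplus P_0'$ with $I'_\Omega\cong I_\Omega$, the relation $p=c\iota$ shows that $p$ lands in $I'_\Omega$, so the triangle decomposes and yields $K\cong C\oplus P_0'$; indecomposability of $K$ together with $K\notin\T$ then forces $P_0'=0$, whence $P_0=I'_\Omega\in\mathcal I$.

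This last step is the delicate point, since the conclusion ``the projective cover is injective'' rests entirely on the symmetry $\K={}^{\bot_1}\T=\T^{\bot_1}$ (used both to split $\delta$ and to make $p$ factor through $\iota$) and on the two minimality choices that make the ping-pong collapse. Once $P_0\in\mathcal P\cap\mathcal I$ and $K'=\Omega K\in\widetilde{\K}$ are in hand, the triangle in (a) is exactly $\Omega K\to P_0\to K\dashrightarrow$, and (b) follows by dualizing the entire argument to the co-syzygy $K\to I_0\to\Sigma K\dashrightarrow$, with $I_0\in\mathcal P\cap\mathcal I$ and $K''=\Sigma K\in\widetilde{\K}$ indecomposable.
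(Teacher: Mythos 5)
Your proof is correct, and it shares the paper's overall skeleton -- minimal projective presentation of $K$, show the syzygy lies in $\widetilde{\K}$, then a two-way factorization forcing the projective middle term to be injective, with (b) by duality -- but both main steps are executed by genuinely different means. For the syzygy step, the paper argues by contradiction: if $\Omega K\notin\widetilde{\K}$ then $\Omega K\in\h$, so Lemma \ref{stable} supplies an $\EE$-triangle $\Omega K\to T\to T'$ with $T,T'\in\T$, and comparing it with the syzygy triangle via Lemma \ref{zero} and Remark \ref{useful}(a) forces $\Omega K\in\T$, a contradiction. You instead prove $\Omega K\in\K$ positively: Lemma \ref{zero} makes $\pi$ a right $\T$-approximation, and the long exact sequence together with $\EE(\T,\mathcal P)=0$ kills $\EE(\T,\Omega K)$; this avoids Lemma \ref{stable} entirely. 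For the injectivity step, the paper builds the full injective copresentation $K'\to I'\to K''$ with $K''\in\widetilde{\K}$ indecomposable (dual of the first part), constructs morphisms of $\EE$-triangles in both directions, and extracts $K\simeq K''$ and $P\simeq I'$ via Remark \ref{useful}(b), Krull--Schmidt, and the two-out-of-three property for isomorphisms of $\EE$-triangles. Your ping-pong is leaner: you need only $\Sigma\Omega K\in\K$ (neither its indecomposability nor membership in $\widetilde{\K}$), you factor the inflations $p$ and $\iota$ through one another, and left minimality of $\iota$ plus a cone decomposition and indecomposability of $K$ finish the argument; both versions ultimately rest on the same two-sided orthogonality $\K={^{\bot_1}}\T=\T^{\bot_1}$ and on Lemma \ref{zero} and its dual. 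One cosmetic slip: membership of the indecomposable $\Omega K$ in $\widetilde{\K}$ requires ruling out $\Omega K\in\T$, not merely $\Omega K\in\mathcal P$; but your splitting argument ($\delta\in\EE(K,\T)=0$) applies verbatim under the assumption $\Omega K\in\T$, and Lemma \ref{app}(a) with $\pi$ right minimal also yields $\Omega K\notin\T$ directly, so nothing is lost.
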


\begin{proof}
Let $K\in \widetilde{\K}$ be an indecomposable object, $K$ admits an $\EE$-triangle $\xymatrix{\Omega K \ar[r] &P \ar[r]^p  &K \ar@{-->}[r]&}$ where $P\in \mathcal P$ and $p$ is right minimal. By Lemma \ref{app}, $\Omega K$ is indecomposable and $P\in \mathcal P$.

We first show $\Omega K\in \widetilde{\K}$.

Obviously $\Omega K\notin \T$, if $\Omega K\notin \widetilde{\K}$,  it admits an $\EE$-triangle $\xymatrix{\Omega K \ar[r] &T \ar[r] &T' \ar@{-->}[r] &}$ where $T,T'\in \T$, then we have the following commutative diagram.
$$\xymatrix{
\Omega K \ar@{=}[d]\ar[r] &T \ar[d] \ar[r] &T' \ar[d]^t \ar@{-->}[r] &\\
\Omega K \ar[r] &P \ar[r]^p  &K \ar@{-->}[r]&
}
$$
By Lemma \ref{zero}, $t$ factors through $\mathcal P$, hence factors through $p$, which implies $\Omega K$ is a direct summand of $T$, which means $\Omega K\in \T$, a contradiction.\\
We denote $\Omega K$ by $K'$, by the dual of the argument above, we have the following $\EE$-triangle $$\xymatrix{K' \ar[r] &I' \ar[r] &K'' \ar@{-->}[r] &}$$ where $I'\in \mathcal I$ and $K''\in \widetilde{\K}$ is indecomposable. Now we have the following commutative diagram:
$$\xymatrix{
K' \ar[r] \ar@{=}[d] &P \ar[r] \ar[d]^{i'} &K \ar[d]^{k'} \ar@{-->}[r] &\\
K' \ar[r] \ar@{=}[d] &I' \ar[r] \ar[d]^i &K'' \ar[d]^k \ar@{-->}[r] &\\
K' \ar[r] &P \ar[r]  &K \ar@{-->}[r]&
}
$$
We get $K$ is a direct summand of $K''\oplus P$, since $K\in \widetilde{\K}$, we have $K\simeq K''$ and $k'$ is an isomorphism. Hence $i'$ is also an isomorphism. Then $P\in \mathcal P\cap \mathcal I$ and (a) holds. By the same method we can show that (b) also holds.
\end{proof}

\begin{cor}\label{maincor}
If $H$ and $H^{\op}$ are quotient functors, then
 $\B/(\mathcal P\cap \mathcal I)=\widehat{\K}/(\mathcal P\cap \mathcal I)\oplus \h/(\mathcal P\cap \mathcal I)$. Moreover, $\widehat{\K}/(\mathcal P\cap \mathcal I)$ and $\h/(\mathcal P\cap \mathcal I)$ are extriangulated subcategories of $\B/(\mathcal P\cap \mathcal I)$.
\end{cor}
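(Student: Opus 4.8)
\textit{Plan.} The strategy is to verify directly that the quotient $\mathcal{D}:=\B/(\mathcal P\cap\mathcal I)$ splits as a direct sum of the two full subcategories $\widehat{\K}/(\mathcal P\cap\mathcal I)$ and $\h/(\mathcal P\cap\mathcal I)$, which reduces to three checks: every object of $\mathcal{D}$ decomposes with respect to the two subcategories, there are no nonzero morphisms between them in $\mathcal{D}$, and $\EE$ has no nonzero cross-extensions in $\mathcal{D}$. Throughout I use that $\mathcal P\cap\mathcal I$ consists of projective-injective objects, so $\EE(P,-)=0=\EE(-,P)$ for $P\in\mathcal P\cap\mathcal I$; combined with $\B/(\mathcal P\cap\mathcal I)$ being extriangulated by \cite[Proposition 3.30]{NP}, the $\EE$-triangles of $\mathcal{D}$ are exactly the images of those of $\B$. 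The object decomposition is immediate from the setup: $\B$ is Krull-Schmidt, every indecomposable lies in $\h$ or in $\widetilde{\K}$, and since $\mathcal P\cap\mathcal I\subseteq\T\subseteq\h$ becomes zero in $\mathcal{D}$ while $\widetilde{\K}\subseteq\widehat{\K}$, every object of $\mathcal{D}$ is a sum of an object of $\widehat{\K}/(\mathcal P\cap\mathcal I)$ and one of $\h/(\mathcal P\cap\mathcal I)$.

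For the Hom-orthogonality I would start from Lemma \ref{zero}, giving $\Hom_{\underline\B}(\h,\widetilde{\K})=0$, and its dual (valid since $H^{\op}$ is a quotient functor, via Theorem \ref{mainop}), giving $\Hom_{\overline\B}(\widetilde{\K},\h)=0$; that is, a morphism $\h\to\widetilde{\K}$ factors through $\mathcal P$ and a morphism $\widetilde{\K}\to\h$ factors through $\mathcal I$. The key point is to upgrade ``$\mathcal P$'' and ``$\mathcal I$'' to ``$\mathcal P\cap\mathcal I$''. Given $f\colon X\to K$ with $X\in\h$, $K\in\widetilde{\K}$, write $f$ through a projective $P$; by Lemma \ref{syzygy}(a) there is a deflation $P_0\to K$ with $P_0\in\mathcal P\cap\mathcal I$, and $P\to K$ lifts along it by projectivity, so $f$ factors through $P_0$ and vanishes in $\mathcal{D}$. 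Dually, using the inflation $K\to I_0$ with $I_0\in\mathcal P\cap\mathcal I$ from Lemma \ref{syzygy}(b) and injectivity of $\mathcal I$, any morphism $\widetilde{\K}\to\h$ vanishes in $\mathcal{D}$. As morphisms into or out of the $\mathcal P\cap\mathcal I$ part are already zero in $\mathcal{D}$, this yields $\Hom_{\mathcal{D}}(\h,\widehat{\K})=0=\Hom_{\mathcal{D}}(\widehat{\K},\h)$.

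For the Ext-orthogonality I would feed the $\EE$-triangles of Lemma \ref{syzygy} into the long exact sequences of $\Hom$ and $\EE$. Applying $\Hom(-,X)$ to $K'\to P_0\to K\dashrightarrow$ (with $P_0\in\mathcal P\cap\mathcal I$, $K'\in\widetilde{\K}$) and using $\EE(P_0,X)=0$ identifies $\EE(K,X)$ with $\Coker\big(\Hom(P_0,X)\to\Hom(K',X)\big)$; every morphism $K'\to X$ factors through $\mathcal I$ by the dual of Lemma \ref{zero}, and since $K'\to P_0$ is an inflation with injective source of the relevant factorization it factors through $K'\to P_0$, so this cokernel vanishes and $\EE(\widetilde{\K},\h)=0$. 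The dual computation, applying $\Hom(X,-)$ to $K\to I_0\to K''\dashrightarrow$ and using $\EE(X,I_0)=0$ together with the fact that every morphism $X\to K''$ factors through $\mathcal P$ and hence through the deflation $I_0\to K''$, gives $\EE(\h,\widetilde{\K})=0$. Including the $\mathcal P\cap\mathcal I$ part (vanishing $\EE$ on both sides) yields $\EE(\widehat{\K},\h)=0=\EE(\h,\widehat{\K})$ in $\mathcal{D}$. Combined with the two preceding steps this is exactly the statement that $\mathcal{D}=\widehat{\K}/(\mathcal P\cap\mathcal I)\oplus\h/(\mathcal P\cap\mathcal I)$ as extriangulated categories, and extension-closedness of each summand — hence that each is an extriangulated subcategory — follows automatically from the object decomposition together with the vanishing of cross-extensions.

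The main obstacle I anticipate is the Ext-orthogonality step: one must justify that $\EE$ descends through the projective-injective quotient so that the extension groups in $\mathcal{D}$ are computed by $\B$-triangles, and then run the two factorization arguments that force the connecting maps in the long exact sequences to be surjective. The Hom-orthogonality upgrade from $\mathcal P$ and $\mathcal I$ to $\mathcal P\cap\mathcal I$ is the technical heart shared by both the Hom and the Ext vanishing, and it is precisely there that Lemma \ref{syzygy} is essential.
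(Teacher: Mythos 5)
Your first two steps are correct and are essentially the paper's own argument: the object decomposition comes from Krull--Schmidt, and the vanishing of $\Hom_{\B/(\mathcal P\cap\mathcal I)}$ between the two parts is obtained, exactly as the paper does, by combining Lemma \ref{zero} and its dual with the upgrade from $\mathcal P$ (resp.\ $\mathcal I$) to $\mathcal P\cap\mathcal I$ supplied by Lemma \ref{syzygy}. Your additional computation that the cross-extension groups vanish is also correct, and is a point the paper leaves implicit.

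The gap is in the last step. You assert that extension-closedness of the two summands ``follows automatically from the object decomposition together with the vanishing of cross-extensions.'' That implication is false. Counterexample inside an abelian (hence extriangulated) category: in $\B=\mod kA_2$, whose indecomposables are the simples $S_1,S_2$ and the projective-injective $M$ with top $S_1$ and socle $S_2$, put $\mathcal A=\add(S_1\oplus S_2)$ and $\mathcal B=\add(M)$. Every object is a direct sum of objects of $\mathcal A$ and $\mathcal B$, and all cross-extensions vanish because $\Ext^1(M,-)=0=\Ext^1(-,M)$; yet $\mathcal A$ is not extension-closed, since the non-split extension $S_2\rightarrow M\rightarrow S_1\dashrightarrow$ has middle term outside $\mathcal A$. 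What fails there, and what any correct argument must use, is precisely the Hom-orthogonality. The missing argument --- which is the bulk of the paper's actual proof --- runs as follows: given an $\EE$-triangle $X\xrightarrow{x}Y\xrightarrow{y}Z\dashrightarrow$ with $X,Z\in\h$, write $Y=Y_1\oplus Y_2$ with $Y_1\in\h$ and $Y_2\in\widetilde{\K}$; the composite of $x$ with the projection $p_2\colon Y\to Y_2$ vanishes in $\B/(\mathcal P\cap\mathcal I)$ by Hom-orthogonality, so by the weak cokernel property of $y$ in the extriangulated quotient, $p_2$ factors through $Z$ modulo $\mathcal P\cap\mathcal I$; since also $\Hom_{\B/(\mathcal P\cap\mathcal I)}(Z,Y_2)=0$, the identity $1_{Y_2}=p_2\iota_2$ factors through $\mathcal P\cap\mathcal I$, whence $Y_2\in\mathcal P\cap\mathcal I$ by Krull--Schmidt, and then $Y_2=0$ because objects of $\widetilde{\K}$ have no summands in $\T$. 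The case $X,Z\in\widehat{\K}$ is analogous (the paper first deduces $Y\in\K$ from the exact sequence for $\EE(\T,-)$, then removes any summand of $Y$ lying in $\T$ by the same device). So your proof is repairable with an ingredient you already established --- the Hom-orthogonality --- but as written the conclusion is attributed to the wrong ingredient, and the step you wave away is exactly where the paper does its work; your Ext-orthogonality computation, while true, cannot substitute for it.
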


\begin{proof}
By Lemma \ref{stable}, its dual and Lemma \ref{syzygy}, we have $\B/(\mathcal P\cap \mathcal I)=\widehat{\K}/(\mathcal P\cap \mathcal I)\oplus \h/(\mathcal P\cap \mathcal I)$.\\
Now it is enough to show $\widehat{\K}$ and $\h$ are extension closed.\\
Let $\xymatrix{X\ar[r]^{x} &Y\ar[r]^{y} &Z \ar@{-->}[r] &}$ be an $\EE$-triangle where $X,Z\in \widehat{\K}$. We already have $Y\in \K$. If $Y$ has a direct summand $T \in \T$. Since $\Hom_{\B/(\mathcal P\cap \mathcal I)}(T,Z)=0$, then $T$ is a direct summand of some $X\oplus I$ where $I\in \mathcal P\cap \mathcal I$. Hence $T\in \mathcal P\cap \mathcal I$.\\
Now let $\xymatrix{X\ar[r]^{x} &Y\ar[r]^{y} &Z \ar@{-->}[r] &}$ be an $\EE$-triangle where $X,Z\in \h$, we have $Y=Y_1\oplus Y_2$ where $Y_1\in \h$ and $Y_2\in \widetilde{\K}$. Since $\Hom_{\B/(\mathcal P\cap \mathcal I)}(Y_2,Z)=0$, $Y_2$ is a direct summand of some $X\oplus I'$ where $I'\in \mathcal P\cap \mathcal I$. Hence $Y_2=0$ and $Y\in \h$.
\end{proof}

\textbf{Now we give the proof of Theorem \ref{main}.}

\begin{proof}
Since $\T$ is functorially finite, satisfying (1),(2),(3), $H$ and $H^{\op}$ are quotient functors, by Corollary \ref{maincor}, we get $\B/(\mathcal P\cap \mathcal I)=\widehat{\K}/(\mathcal P\cap \mathcal I)\oplus \h/(\mathcal P\cap \mathcal I)$. If $\B/(\mathcal P\cap \mathcal I)$ is connected, since $\h/(\mathcal P\cap \mathcal I)\neq 0$, we have $\widetilde{\K}=0$. Then by Lemma \ref{stable} condition (b*) in Proposition \ref{CT} is satisfied.

(a) This is followed by Proposition \ref{CT}.

(b) If $\T$ is extension closed, then we can get cotorsion pairs $(\T,\K)$, $(\K,\T)$. Since $\widetilde{\K}=0$, we have $\K\subseteq \T$, hence $\K$ is rigid, then $\K$ is pre-cluster tilting \cite{LZ}. $H$ is a quotient functor implies that $\B/\K$ is an abelian category. By \cite{LZ}, $\K$ is a cluster-tilting subcategory, hence $\T=\K$ is also a cluster-tilting subcategory.
\end{proof}

Now we have the following useful corollary, which generalizes \cite[Theorem 1.10]{LZ}.

\begin{cor}\label{all abelian}
Let $\B/(\mathcal P\cap \mathcal I)$ be connected and $(\U,\V),(\V,\U)$ be cotorsion pairs on $\B$. Let $\C=\U\cap \V$ and $\K=\{ \text{direct sums of objects in }\U \text{ and objects in }\V\}$. If $\C\supset \mathcal P\cup \mathcal I$, then the following statements are equivalent
\begin{itemize}
\item[(a)] $\C$ is cluster-tilting;
\item[(b)] $\B/\K$ is abelian;
\item[(c)] $\B/\V$ is abelian;
\item[(d)] $\B/\U$ is abelian;
\item[(e)] $\B/\C$ is abelian.
\end{itemize}
\end{cor}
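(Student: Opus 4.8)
The plan is to apply the second main theorem and Proposition \ref{eq2} to the three subcategories $\C$, $\U$ and $\V$, after first recording how the twin cotorsion pairs force all the relevant orthogonals to collapse onto one another.

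First I would verify the standing hypotheses of Section~4 for $\T=\C$. Since $\mathcal P\cup\mathcal I\subseteq\C=\U\cap\V$, both $\mathcal P\subset\C$ and $\mathcal I\subset\C$; as $\mathcal P,\mathcal I\subset\V$ while $\C\subset\U$, we get $\EE(\C,\mathcal P)\subseteq\EE(\U,\V)=0$ and $\EE(\mathcal I,\C)\subseteq\EE(\V,\U)=0$, giving conditions (1) and (2). Next I would identify the orthogonals: from $\EE(\U,\V)=0=\EE(\V,\U)$ one reads off $\U+\V\subseteq\C^{\bot_1}$ and $\U+\V\subseteq{}^{\bot_1}\C$, and using the completeness of the two cotorsion pairs (as in \cite{LZ}) the reverse inclusions hold, so $\C^{\bot_1}={}^{\bot_1}\C=\K=\U+\V$ and both $(\C,\K)$ and $(\K,\C)$ are cotorsion pairs. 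By Remark \ref{useful}(d) this yields $\Hom_{\underline\B}(\Omega\C,B)=0\Leftrightarrow B\in\K\Leftrightarrow\Hom_{\overline\B}(B,\Sigma\C)=0$, which is condition (3). Finally, as an intersection (resp. members) of cotorsion pairs, $\C$, $\U$ and $\V$ are all rigid and extension closed.

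The core is (a)$\Leftrightarrow$(b). Since $(\C,\K)$ is a cotorsion pair, Proposition \ref{eq2} gives ``$\B/\K$ abelian $\Leftrightarrow$ $H=\Hom_{\underline\B}(\Omega\C,-)$ is a quotient functor'', and dually, since $(\K,\C)$ is a cotorsion pair, ``$\B/\K$ abelian $\Leftrightarrow$ $H^{\op}=\Hom_{\overline\B}(-,\Sigma\C)$ is a quotient functor''. Hence (b) is equivalent to ``$H$ and $H^{\op}$ are both quotient functors''. I then invoke Theorem \ref{main} with $\T=\C$: its hypotheses hold and, crucially, $\C$ is extension closed, so alternative (b) of Theorem \ref{main} is automatic; concretely, Corollary \ref{maincor} yields $\B/(\mathcal P\cap\mathcal I)=\widehat\K/(\mathcal P\cap\mathcal I)\oplus\h/(\mathcal P\cap\mathcal I)$, and connectedness together with $\h\neq0$ forces $\widetilde\K=0$, i.e. the collapse $\U=\V=\C=\K$, whence $\C$ is cluster tilting. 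Conversely, if $\C$ is cluster tilting then $\K=\C^{\bot_1}=\C$, so $\B/\K=\B/\C$, and $H$ is a quotient functor (by Theorem \ref{main1}, whose conditions (a),(b) follow from Proposition \ref{CT}), so Proposition \ref{eq2} makes this quotient abelian. This settles (a)$\Leftrightarrow$(b).

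For (c) and (d) I would rerun this machinery with $\T=\U$ and $\T=\V$, using $\U^{\bot_1}=\V={}^{\bot_1}\U$ and $\V^{\bot_1}=\U={}^{\bot_1}\V$; these satisfy (1)--(3) with kernel subcategories $\V$ and $\U$, and are extension closed, so Proposition \ref{eq2} and its dual give (c) ``$\B/\V$ abelian $\Leftrightarrow$ $\U$ cluster tilting'' and (d) ``$\B/\U$ abelian $\Leftrightarrow$ $\V$ cluster tilting''. The elementary remark that $\X\in\{\C,\U,\V\}$ is cluster tilting if and only if $\U=\V=\C$ (cluster tilting means $(\X,\X)$ is a cotorsion pair, forcing $\U+\V=\U\cap\V$) shows (a),(b),(c),(d) are all equivalent to the collapse. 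When the collapse holds $\B/\C=\B/\K$, giving (a)$\Rightarrow$(e). The main obstacle is the reverse implication (e)$\Rightarrow$(a): unlike $\B/\K$, the quotient $\B/\C$ is not directly governed by Proposition \ref{eq2} (since $\C$ is of the form $\T^{\bot_1}$ only once it is cluster tilting), so I would settle it either by importing the equivalence of the various abelian-quotient conditions for a cotorsion pair from \cite[Theorem 1.10]{LZ}, or by a dichotomy argument: if $\U\neq\V$ then $\widetilde\K\neq0$, and a nonzero indecomposable of $\widetilde\K$ produces a morphism in $\B/\C$ admitting no kernel or cokernel, so $\B/\C$ is not abelian. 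The secondary technical point to watch is precisely the identification $\C^{\bot_1}={}^{\bot_1}\C=\U+\V$ and the verification that $(\C,\K)$ and $(\K,\C)$ are genuine cotorsion pairs, since this is what licenses all applications of Proposition \ref{eq2} and Theorem \ref{main}.
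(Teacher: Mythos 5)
Your argument for the equivalence of (a), (b), (c), (d) is the paper's own proof written out in full: the paper's entire proof is the citation of Proposition \ref{eq2}, its dual, and Theorem \ref{main}, applied with $\T=\C$ (kernel $\K$), $\T=\U$ (kernel $\V$) and $\T=\V$ (kernel $\U$), together with exactly the bookkeeping you supply (conditions (1)--(3), the identification of the orthogonal subcategories, and the observation that any of $\C,\U,\V$ being cluster tilting is equivalent to the collapse $\U=\V=\C$). The genuine divergence, and the weak point, is item (e). The obstacle you claim there does not exist: $\C$ \emph{is} always the kernel subcategory attached to an admissible choice of $\T$, namely $\T=\K$, since $\{X\mid\EE(\K,X)=0\}=\{X\mid\EE(\U,X)=0\}\cap\{X\mid\EE(\V,X)=0\}=\V\cap\U=\C$ and dually $\{X\mid\EE(X,\K)=0\}=\C$; the cotorsion pairs $(\K,\C)$ and $(\C,\K)$ needed for Proposition \ref{eq2} and its dual are exactly the ones you already produced in your second step; and $\K$ is extension closed, because a conflation $K_1\to E\to K_2$ with $K_i=U_i\oplus V_i$ has extension class lying in $\EE(U_2,U_1)\oplus\EE(V_2,V_1)$ (the mixed components vanish as $\EE(\U,\V)=0=\EE(\V,\U)$), so $E$ splits as a direct sum of an extension inside $\U$ and an extension inside $\V$. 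Hence Proposition \ref{eq2}, its dual and Theorem \ref{main} applied to $\T=\K$ give: (e) $\Leftrightarrow$ both $\Hom_{\uB}(\Omega\K,-)$ and $\Hom_{\oB}(-,\Sigma\K)$ are quotient functors $\Leftrightarrow$ $\K$ is cluster tilting $\Leftrightarrow$ $\U=\V=\C$, in exact parallel with (b), (c), (d); this uniform treatment of all four quotients is evidently what the paper's one-line proof intends. By contrast, your two substitutes are not safe: the dichotomy argument is pure assertion (you give no reason why an indecomposable of $\widetilde{\K}$ yields a morphism of $\B/\C$ with no kernel or cokernel --- note that the paper's mechanism for exploiting $\widetilde{\K}\neq 0$ is the direct-sum decomposition of $\B/(\mathcal P\cap\mathcal I)$ from Corollary \ref{maincor} played against connectedness, not a direct failure of abelianness in a quotient), and citing \cite[Theorem 1.10]{LZ} is legitimate only if that theorem is literally the equivalence (a)$\Leftrightarrow$(e) under the present hypotheses, which you would have to verify rather than assume.

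A second, smaller error: your remark that $\U$ and $\V$ are rigid ``as members of cotorsion pairs'' is false. Only $\EE(\C,\C)\subseteq\EE(\U,\V)=0$ is automatic; $\EE(\U,\U)$ and $\EE(\V,\V)$ need not vanish. Indeed, if $\U$ were rigid, then $\EE(\U,\U)=0$ together with $\{X\mid\EE(\U,X)=0\}=\V$ would force $\U\subseteq\V$, so the collapse $\U=\V=\C$ would hold automatically and the whole corollary would be vacuous. You never use this claim afterwards, so it does no damage to the rest of the argument, but it should be deleted.
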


\begin{proof}
This is followed by Proposition \ref{eq2}, its dual and Theorem \ref{main}.
\end{proof}

Finally, we give two applications of our results in $2$-Calabi-Yau triangulated category.

\begin{cor}\label{extension closed}
Let $\B$ be a connected, Krull-Schmidt, 2-Calabi-Yau triangulated category over a field $k$, $\T$ be a functorially finite, extension closed subcategory. Then $\B/\T$ is abelian if and only if $\T$ is cluster-tilting.
\end{cor}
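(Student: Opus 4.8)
The plan is to recognise the statement as the special case of Theorem \ref{main} in which the extriangulated structure on $\B$ is the triangulated one, coming from $\EE(C,A)=\Hom_\B(C,\Sigma A)$. First I would record the dictionary. In a triangulated category the only projective (resp. injective) object is $0$, so $\mathcal P=\mathcal I=0$, whence $\underline\B=\overline\B=\B$, $\Omega\T=\Sigma^{-1}\T$, $\Sigma\T$ is the ordinary suspension of $\T$, and $\mathcal P\cap\mathcal I=0$, so that $\B/(\mathcal P\cap\mathcal I)=\B$ is connected by hypothesis. Condition (WIC) holds automatically on a triangulated category, and the Section~4 standing requirements $\mathcal P\subset\T$, $\mathcal I\subset\T$, $\EE(\T,\mathcal P)=0=\EE(\mathcal I,\T)$ are vacuous. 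Since a $2$-Calabi--Yau category is Hom-finite and Krull--Schmidt, $\B$ satisfies the running assumptions of the paper. I may also assume $\T\neq0$, as otherwise (for $\B\neq0$) neither ``$\B/\T$ abelian'' nor ``$\T$ cluster-tilting'' can hold and the equivalence is vacuous.

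Next I would feed in the $2$-Calabi--Yau property, which is exactly what supplies the symmetry the theorem needs. Using the Serre duality $\Hom_\B(X,Y)\cong D\Hom_\B(Y,\Sigma^2X)$ (with $D=\Hom_k(-,k)$), one computes, for every object $B$, that $\EE(\T,B)=\Hom_\B(\T,\Sigma B)\cong D\Hom_\B(\Sigma B,\Sigma^2\T)\cong D\Hom_\B(B,\Sigma\T)=D\,\EE(B,\T)$, so $\EE(\T,B)=0$ if and only if $\EE(B,\T)=0$. By Remark \ref{useful}(d) this says precisely that $\Hom_{\underline\B}(\Omega\T,B)=0$ iff $\Hom_{\overline\B}(B,\Sigma\T)=0$, i.e. condition $(3)$ of Theorem \ref{main} holds; the same computation identifies ${}^{\bot_1}\T$ with $\K$, so the entire Section~4 framework, and in particular the cotorsion-pair results, is available.

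For the forward implication I would assume $\B/\T$ abelian and argue as follows. Since $\T$ is extension closed and (WIC) holds, both $(\T,\K)$ and $(\K,\T)$ are cotorsion pairs. Applying Corollary \ref{all abelian} with $\U=\T$ and $\V=\K$, so that $\C=\T\cap\K\supset 0=\mathcal P\cup\mathcal I$, the abelianness of $\B/\T=\B/\U$ forces $\B/\K=\B/\V$ to be abelian as well. Proposition \ref{eq2} applied to $(\T,\K)$ then gives that $\Hom_{\underline\B}(\Omega\T,-)$ is a quotient functor, and its dual applied to $(\K,\T)$ gives that $\Hom_{\overline\B}(-,\Sigma\T)$ is a quotient functor. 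All hypotheses of Theorem \ref{main} are now met, and since $\T$ is extension closed (its condition $(b)$), the theorem yields that $\T$ is cluster-tilting. The converse is the classical fact that the quotient of a triangulated category by a cluster-tilting subcategory is abelian \cite[Theorem 3.3]{KZ}.

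The genuinely non-formal point, and the step I would treat most carefully, is the passage from ``$\B/\T$ abelian'' to ``$\Hom_{\underline\B}(\Omega\T,-)$ and $\Hom_{\overline\B}(-,\Sigma\T)$ are quotient functors.'' Abelianness of $\B/\T$ is not literally one of the criteria of Proposition \ref{eq2}, which is phrased in terms of $\B/\K$, so the argument must route through Corollary \ref{all abelian} to interchange the roles of $\T$ and $\K$. This interchange is legitimate exactly because $2$-Calabi--Yau duality makes $(\K,\T)$ a cotorsion pair alongside $(\T,\K)$ and identifies ${}^{\bot_1}\T$ with $\K$; without that symmetry the two quotients could not be compared, and everything else in the proof is bookkeeping against the already-established Theorem \ref{main}.
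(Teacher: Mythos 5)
Your proof is correct and follows essentially the same route as the paper: the paper's own proof is a one-line appeal to Corollary \ref{all abelian}, whose proof in turn is exactly the chain you unfold (Proposition \ref{eq2}, its dual, and Theorem \ref{main}), with the $2$-Calabi--Yau Serre duality supplying condition (3) of Theorem \ref{main} and the two cotorsion pairs $(\T,\K)$, $(\K,\T)$. The details you add --- the dictionary $\mathcal P=\mathcal I=0$, the automatic validity of (WIC), and the duality computation identifying ${}^{\bot_1}\T$ with $\K$ --- are precisely what the paper leaves implicit.
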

\begin{proof}
This is followed by Corollary \ref{all abelian}.
\end{proof}

\begin{cor}
Let $\B$ be a connected, Krull-Schmidt, Hom-finte, $k$-linear, 2-Calabi-Yau triangulated category with suspension functor $\Sigma$, $\T=\add T$ be a functorially finite subcategory. If $\Hom_{\B}(T,-)$ is full and dense, then $T$ is a cluster-tilting object or $\End_{\B}(T)^{\op}\simeq k$.
\end{cor}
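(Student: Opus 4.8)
The plan is to place ourselves inside the hypotheses of Theorem \ref{main} and read the dichotomy off from it. Since $\B$ is triangulated, the only projective and the only injective object is $0$, so $\mathcal P=\mathcal I=\mathcal P\cap\mathcal I=0$; hence $\underline{\B}=\overline{\B}=\B$, $\Omega\T=\Sigma^{-1}\T$, $\Sigma\T$ is the ordinary shift of $\T$, and $\EE(X,Y)\cong\Hom_{\B}(X,\Sigma Y)$. Consequently $H=\Hom_{\underline{\B}}(\Omega\T,-)$ is naturally isomorphic to $\Hom_{\B}(T,\Sigma(-))$, and since $\Sigma$ is an autoequivalence this is full and dense precisely when $\Hom_{\B}(T,-)$ is; thus the hypothesis says exactly that $H$ is a quotient functor. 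First I would use the $2$-Calabi-Yau duality $\Hom_{\B}(T,\Sigma X)\cong D\Hom_{\B}(X,\Sigma T)$ (with $D$ the $k$-dual) to supply the remaining inputs of Theorem \ref{main}: it gives condition $(3)$ and the standing identity ${}^{\bot_1}\T=\K$ directly, and it identifies $H^{\op}=\Hom_{\overline{\B}}(-,\Sigma\T)$ with $D\circ H$, so that $H^{\op}$ is a quotient functor as well. Conditions $(1)$, $(2)$ and the connectedness of $\B/(\mathcal P\cap\mathcal I)=\B$ are then immediate (assuming, as we may, that $T\neq 0$).

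With every hypothesis verified, Theorem \ref{main} gives that $\T$ is cluster-tilting if and only if $\T\cap\Omega\T=\mathcal P=0$. Next I would feed the same data into Corollary \ref{maincor}: it yields $\B=\widehat{\K}/(\mathcal P\cap\mathcal I)\oplus\h/(\mathcal P\cap\mathcal I)$, and since $\B$ is connected while $0\neq\T\subseteq\h$, the summand $\widehat{\K}$ must vanish, i.e.\ $\widetilde{\K}=0$ and therefore $\K\subseteq\T$. Granting $\K\subseteq\T$, the biconditional collapses to a single clause: if $T$ is rigid then $\T\subseteq{}^{\bot_1}\T=\K\subseteq\T$, so $\T=\K=\{X\mid\EE(\T,X)=0\}={}^{\bot_1}\T$, which is exactly the cluster-tilting condition; conversely a cluster-tilting $T$ is rigid (cf.\ also Corollary \ref{rigid}). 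Hence, under our standing hypotheses, $T$ is cluster-tilting if and only if $T$ is rigid, and it remains only to show that when $T$ fails to be rigid one has $\End_{\B}(T)^{\op}\simeq k$.

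This last reduction is the heart of the matter and the step I expect to be the main obstacle: assuming $\B$ connected, $\Hom_{\B}(T,-)$ full and dense, $\K\subseteq\T$, and $T$ \emph{not} rigid, I must force $T$ to be a single brick with $\End_{\B}(T)^{\op}\simeq k$. Non-rigidity produces indecomposable summands $T_a,T_b$ of $T$ with $\Hom_{\B}(T_a,\Sigma T_b)\neq 0$, equivalently (by $2$-Calabi-Yau duality) an indecomposable summand $T_0$ with $\Sigma T_0\in\add T$, that is $T_0\in\T\cap\Omega\T$. My plan here is to exploit the equivalence $\B/\mathcal J\simeq\mod\End_{\B}(T)^{\op}$ coming from fullness and density, together with the Serre duality, to propagate the relation $\Sigma T_0\in\add T$ along irreducible maps and show that $\add T$ is forced to be $\Sigma$-closed and to generate $\B$, so that connectedness collapses the whole picture onto $T_0$; Hom-finiteness over $k$ then pins $\End_{\B}(T)^{\op}$ down to $k$. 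The delicate point — where connectedness of $\B$, the identity ${}^{\bot_1}\T=\K$, and the full strength of fullness must all be used simultaneously — is to transfer the \emph{triangulated} connectedness of $\B$ into connectedness (locality) of the abelian quotient, thereby excluding a genuinely larger non-rigid $\add T$ and isolating the brick case as the only alternative to cluster-tiltedness.
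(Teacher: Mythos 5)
Your reduction of the corollary to the framework of Section 4 is correct, and your treatment of the rigid case goes through: with $\mathcal P=\mathcal I=0$ one has $\underline{\B}=\overline{\B}=\B$ and $H\cong\Hom_{\B}(T,\Sigma(-))$, the $2$-Calabi--Yau duality supplies condition (3), the standing identity ${}^{\bot_1}\T=\K$, and the fact that $H^{\op}\cong D\circ H$ is a quotient functor; then Corollary \ref{maincor} together with connectedness forces $\widetilde{\K}=0$, i.e. $\K\subseteq\T$, and rigidity of $T$ gives $\T\subseteq{}^{\bot_1}\T=\K$, hence $\T=\K={}^{\bot_1}\T=\T^{\bot_1}$ is cluster-tilting. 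This half is in fact assembled more directly than in the paper, which instead builds the cotorsion pairs $(\T,\K)$ and $(\K,\T)$ from rigidity, uses Lemma \ref{stable} and a result of Beligiannis to conclude that $\B/\K$ is abelian, and then invokes Corollary \ref{extension closed}.

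The genuine gap is the second horn of the dichotomy, which you yourself label ``the main obstacle'': you never prove that if $T$ is not rigid then $\End_{\B}(T)^{\op}\simeq k$. What you offer is only a plan --- ``propagate $\Sigma T_0\in\add T$ along irreducible maps'', ``show $\add T$ is $\Sigma$-closed and generates $\B$'', ``transfer connectedness of $\B$ into locality of the abelian quotient'' --- with no argument for any of these claims, and this case is exactly where the content of the statement lies: none of the machinery developed in the paper (Theorem \ref{main}, Proposition \ref{CT}, Corollary \ref{maincor}) says anything about $\End_{\B}(T)$ when $\T\cap\Omega\T\neq\mathcal P$; Theorem \ref{main} only tells you that $T$ is then not cluster-tilting. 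The paper closes this case by citing the proof of \cite[Theorem 27]{GJ}: in Grimeland--Jacobsen's argument for the case $T\cap\Sigma T\neq 0$, fullness of $\Hom_{\B}(T,-)$ and the $2$-Calabi--Yau property force $\End_{\B}(T)^{\op}\simeq k$, and, as the paper observes, their standing hypothesis of finitely many isomorphism classes of indecomposables is not used in that half, so the argument applies verbatim here. To complete your proof you must either cite that argument, as the paper does, or actually reconstruct it; your sketch is not a reconstruction, and it is not evident that its intermediate claims (for instance that $\add T$ must generate $\B$) are even the correct ones to aim for.
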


\begin{proof}
If $\Hom_{\B}(T,-)$ is full and dense, functor $\Hom_{\B}(\Sigma^{-1}T,-)$ is also full and dense. 
If $T\cap \Sigma^{-1} T=0$, then $T$ is rigid by Corollary \ref{rigid}. Let $\T=\add T$, since $\B$ is Krull-Schmidt, Hom-finite, $k$-linear and $2$-Calabi-Yau, we get two cotorsion pairs $(\T,\K)$ and $(\K,\T)$. By Lemma \ref{stable}, any indecomposable object $B\notin \K$ admits a triangle $\Sigma^{-1}T^2\to B\to T^1\to T^2$ where $T^1,T^2\in T$. By \cite[Propsition 4.16(i)]{B}, we get $\B/\K$ is an abelian category. Hence by Corollary \ref{extension closed}, $\T=\K$ is a cluster tilting subcategory. Then $T$ is a cluster tilting object.

If $T\cap \Sigma T\neq 0$, then according to the proof of \cite[Theorem 27]{GJ} (note that in the proof of the case $T\cap \Sigma T\neq 0$, the assumption ``finitely many isomorphism classes of indecomposable objects" is not used), we get $\End_{\B}(T)^{\op}\simeq k$.
\end{proof}


\begin{thebibliography}{BBD}

\bibitem[B]{B}
A. Beligiannis.
\newblock Rigid objects, triangulated subfactors and abelian
localizations.
\newblock Math. Z. 20 (2013), no. 274, 841--883.

\bibitem[DL]{DL}
L. Demonet, Y. Liu. Quotients of exact categories by cluster tilting subcategories as module categories.  J. Pure Appl. Algebra 217(12), no. 12,  2282--2297.


\bibitem[GJ]{GJ} B. Grimeland, K. Jacobsen.
Abelian quotients of triangulated categories.
J. Algebra 439 (2015), 110--133.




\bibitem[KZ]{KZ}
S. Koenig, B. Zhu.
\newblock From triangulated categories to abelian categories: cluster tilting in a general framework.
\newblock Math. Z. 258 (2008), no. 1, 143--160.



\bibitem[LN]{LN}
Y. Liu, H. Nakaoka.
\newblock Hearts of twin cotorsion pairs on extriangulated categories.
\newblock arXiv: 1702.00244.

\bibitem[LZ]{LZ}
Y. Liu, P. Zhou.
\newblock Abelian categories arising from cluster tilting subcategories. arXiv: 1809.02315v1, 2018.



\bibitem[NP]{NP}
H. Nakaoka, Y. Palu.
\newblock Mutation via Hovey twin cotorsion pairs and model structures in extriangulated categories.
\newblock arXiv:1605.05607.

\bibitem[ZZ]{ZZ}
P. Zhou, B. Zhu.
\newblock Triangulated quotient categories revisited.
\newblock J. Algebra 502 (2018), 196-232.

\end{thebibliography}
\end{document}